    \newcommand{\BA}{{\mathbb {A}}} \newcommand{\BB}{{\mathbb {B}}}
    \newcommand{\BC}{{\mathbb {C}}} 
     \newcommand{\BF}{{\mathbb {F}}}
    \newcommand{\BQ}{{\mathbb {Q}}} \newcommand{\BR}{{\mathbb {R}}}
     \newcommand{\BZ}{{\mathbb {Z}}}
    \newcommand{\CO}{{\mathcal {O}}}
    \newcommand{\fc}{{\mathfrak{c}}}
     \newcommand{\fn}{{\mathfrak{n}}}
     \newcommand{\fp}{{\mathfrak{p}}}
     \newcommand{\fH}{{\mathfrak{H}}}
     \newcommand{\fN}{{\mathfrak{N}}}
     \newcommand{\fX}{{\mathfrak{X}}}
    \newcommand{\Aut}{{\mathrm{Aut}}}
    \newcommand{\cond}{\mathrm{cond^r}}
    \newcommand{\diag}{{\mathrm{diag}}}
    \newcommand{\End}{{\mathrm{End}}}
    \newcommand{\Gal}{{\mathrm{Gal}}} \newcommand{\GL}{{\mathrm{GL}}}
    \newcommand{\Hom}{{\mathrm{Hom}}}
    \renewcommand{\Im}{{\mathrm{Im}}}
    \newcommand{\ord}{{\mathrm{ord}}} \newcommand{\rank}{{\mathrm{rank}}}
     \newcommand{\Pic}{\mathrm{Pic}}
    \newcommand{\PSL}{{\mathrm{PSL}}}
    \renewcommand{\mod}{\ \mathrm{mod}\ }
    \newcommand{\ram}{{\mathrm{ram}}}
    \newcommand{\Res}{{\mathrm{Res}}}
    \newcommand{\supp}{{\mathrm{supp}}}
      \newcommand{\Supp}{{\mathrm{Supp}}}
    \newcommand{\tor}{{\mathrm{tor}}}
    \font\cyr=wncyr10
    \newcommand{\Sha}{\hbox{\cyr X}}
    \newcommand{\wh}{\widehat}
    \newcommand{\ov}{\overline}
    \newcommand{\lra}{\longrightarrow}
    \newcommand{\ra}{\rightarrow}
    \theoremstyle{plain}
    \newtheorem{thm}{Theorem}[section] \newtheorem{cor}[thm]{Corollary}
    \newtheorem{lem}[thm]{Lemma}  \newtheorem{prop}[thm]{Proposition}
    \newtheorem {conj}[thm]{Conjecture} \newtheorem{defn}[thm]{Definition}
\theoremstyle{remark} \newtheorem{remark}[thm]{Remark}
\theoremstyle{remark} 
\theoremstyle{remark} 
    \newcommand{\adeles}{ad\'{e}les~}
    \newcommand{\cO}{\mathcal O}
    \numberwithin{equation}{section}
\begin{document}

\title{Horizontal non-vanishing of\\ Heegner points and toric periods}

\author{Ashay A. Burungale and Ye Tian}

\address{Ashay A. Burungale:  Institute for Advanced Study,
Einstein Drive, Princeton NJ 08540 and \newline
California Institute of Technology, 1200 E California Blvd, Pasadena, CA 91125 \footnote{current address}} \email{ashayburungale@gmail.com}

\address{Ye Tian: MCM, HLM, Academy of Mathematics and Systems Science, Chinese Academy of Sciences, Beijing 100190, China 
 and \newline
School of Mathematical Sciences, University of Chinese Academy of Sciences, Beijing 10049, China} \email{ytian@math.ac.cn}

\maketitle
\begin{abstract}
Let $F$ be a totally real number field and $A$ a modular $\GL_2$-type abelian variety over $F$.
Let $K/F$ be a CM quadratic extension. Let $\chi$ be a class group character over $K$ such that 
the Rankin-Selberg convolution $L(s,A,\chi)$ is self-dual with root number $-1$.
We show that the number of class group characters $\chi$ with bounded ramification
such that $L'(1, A, \chi) \neq 0$ increases with the absolute value of the discriminant of $K$.

We also consider a rather general rank zero situation.
Let $\pi$ be a cuspidal cohomological automorphic representation over $\GL_{2}(\BA_{F})$. Let $\chi$ be a Hecke character over $K$ such that
the Rankin--Selberg convolution $L(s,\pi,\chi)$ is self-dual with root number $1$.
We show that the number of Hecke characters $\chi$ with fixed $\infty$-type and  bounded ramification
such that $L(1/2, \pi, \chi) \neq 0$ increases with the absolute value of the discriminant of $K$.

The Gross--Zagier formula and the Waldspurger formula relate the question to horizontal non-vanishing of Heegner points and toric periods, respectively. 
For both situations, the strategy is 
geometric relying on the Zariski density of
CM points on self-products of a quaternionic Shimura variety.
The recent result \cite{Ts, YZ, AGHP} on the Andr\'e--Oort conjecture
is accordingly fundamental to the approach.
\\
\end{abstract}

\tableofcontents

\section{Introduction}
For a self-dual family of L-functions with root number $-1$ (resp. $+1$),
the central derivatives (resp. central L-values) are generically believed to be non-vanishing.
These values can often be expressed as  periods of automorphic forms over algebraic cycles in a Shimura variety. 
An appropriate density of such cycles may thus be a key towards the non-vanishing. 

An instructive set up arises from  self-dual Rankin--Selberg convolutions of a
fixed cuspidal automorphic representation of $\GL_2$ over a totally real field $F$ and  Hecke characters 
over  CM quadratic extensions of $F$ with  fixed infinity type and bounded conductor.   Strictly speaking,  the theta series 
associated to these characters  have prescribed local behavior at a finite set of places of $F$ including all the archimedean places.
The number of such Hecke characters with the corresponding 
central derivative of the L-functions (resp. central L-value)
non-vanishing is expected to grow with the absolute value of the
discriminant of the CM extension. This is the horizontal variation we consider in the article.

A special case relates to the following. For a fixed modular abelian variety over $F$, a fundamental question is to understand the structure of the Mordell--Weil group of the abelian variety over varying extensions of $F$. A naive expectation is that the the rank of the Mordell--Weil group typically increases with the degree of the extensions as long as the root number over the extensions equals $-1$.  
In view of the CM theory and modular parametrisation of the abelian variety, perhaps a natural question is to study the Galois module structure of the Mordell--Weil group over Hilbert class fields of varying  CM quadratic extensions of $F$. This amounts to consider the twists of the abelian variety by class group characters of the varying CM quadratic extensions.  

We begin with the general setup. 

Let $\ov{\BQ}$ be an algebraic closure of the rationals and $\iota_{\infty}: \overline{\BQ} \to \BC$ an embedding.

Let $F$ be a totally real number field, $\cO_F$ the ring of integers, $I$ the set of infinite places and $\BA$ the ring of ad\'eles.
Let $\fn$ and $\fc$ be ideals of $\cO_F$.

Let $\pi$ be a cuspidal automorphic
representation of $\GL_{2}(\BA)$ with conductor $\frak{n}$ and unitary central character $\omega$.
We suppose that 
$\pi$ is cohomological. 
In particular, $\pi_\infty$
is a discrete series for $\GL_2(F_\infty)$ with weight $(k_v)_{v|\infty}$ such that 
$$k_{v} \equiv k_{v'} \mod{2}$$ for $v,v' | \infty$. 




Let $K/F$ be a CM quadratic extension. 
Let $D_{K}$ be the discriminant of the extension $K/\BQ$ and $K_\BA$ the ring of adeles. Let $c$ denote the complex conjugation on $\BC$ which induces the unique non-trivial element of $\Gal(K/F)$ via the embedding $\iota_{\infty}$.
Let $\Sigma$ be a CM type of $K$. We often identify $\Sigma$ with $I$.
Let $\kappa \in \BZ[\Sigma \cup \Sigma c]$ be an infinity type over $K$ such that it arises as an infinity type of an arithmetic Hecke character over $K$. In other words, $\kappa=\kappa_{I}+\kappa^{I}(1-c)$ with $\kappa_{I}, \kappa^{I}\in \BZ_{\geq 0}[\Sigma]$ such that $\kappa_{I}=l\cdot \Sigma$ for an integer $l \geq 0$.
Let $\Pic_{K/F}^{\fc}$ denote the relative ideal class group with conductor $\fc$. In other words, it is the ring class group of $K$ with conductor $\fc$ modulo the image of the class group of $F$ \eqref{rcg}. 

Let $\Theta_{\kappa,\fc}$ be the collection of CM quadratic extensions over $F$ such that there exists a Hecke character $\chi_0:  K^\times \backslash K_\BA^\times\rightarrow \BC^\times$ over $K$ satisfying the following.
\begin{itemize}
\item[(C1)] $\omega\cdot \chi_{0}|_{\BA^\times}=1$,
\item[(C2)]
$\chi_0$ is with infinity type $\kappa$ and $\cond(\chi_{0})=\fc\cO_K$.
\end{itemize}

For $K \in \Theta_{\kappa,\fc}$, let $\frak{X}_{K,\kappa,\fc}$ denote the group of Hecke characters
$\chi$ over $K$ satisfying the conditions (C1) and (C2).
The set $\frak{X}_{K,\kappa,\fc}$ is a homogenous space of the relative ideal class group $\Pic_{K/F}^{\fc}$. 
In what follows, we regard $\kappa_{I}$, $\kappa^{I}$ as elements of $\BZ_{\geq 0}[I]$ and fix them independent of $K$. 

 For $\chi \in \frak{X}_{K,\kappa,\fc}$, we consider the Rankin--Selberg convolution $L(s,\pi,\chi)$ 
 \footnote{In this article, we follow automorphic normalisation unless otherwise stated.}. 
In view of (C1), the convolution is self-dual.  Let $\epsilon(\pi,\chi)$ be the corresponding root number. 
As $K$ varies, the generic vanishing or non-vanishing of the central L-values is conjectured to be controlled by the root number.
\begin{conj}
\label{NV}
Let $F$ be a totally real number field, $\BA$ the ring of adeles and $\pi$ a cuspidal cohomological automorphic
representation of $\GL_{2}(\BA)$.
Let $\fc \subset \cO_F$ be an ideal and $I$ the set of infinite places of $F$.
Let $\Theta_{\kappa,\fc}$ be the set of CM quadratic extensions $K/F$ for $\kappa \in \BZ[I \cup Ic]$ 
with $c \in \Gal(K/F)$ the non-trivial element and $\fX_{K,\kappa,\fc}$ the set of Hecke characters over $K$ satisfying the above conditions (C1) and (C2).
Let $i\in \{0,1\}$ be such that
$$
\epsilon(\pi,\chi_{0})=(-1)^{i}
$$
for some $\chi_{0} \in \fX_{K,\kappa,\fc}$.

For any $\epsilon > 0$ and $K \in \Theta_{\kappa,\fc}$, we have
$$ \#\left\{\chi\in \frak{X}_{K,\kappa,\fc} \ \Big|\ L^{(i)}(1/2, \pi,  \chi)\neq 0 \right\}\gg_{\epsilon, \pi} |D_{K}|^{\frac{1}{2} - \epsilon}.$$
For $\Theta \subset \Theta_{\kappa,\fc}$ infinite, in particular
$$\lim_{K\in \Theta} \#\left\{\chi\in \frak{X}_{K,\kappa,\fc} \ \Big|\ L^{(i)}(1/2, \pi,  \chi)\neq 0 \right\}=\infty.$$
\end{conj}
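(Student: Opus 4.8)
The plan is to pass from the central $L$-values to automorphic periods, to recognise the twists $\chi\in\fX_{K,\kappa,\fc}$ as the Pontryagin dual of a single Galois orbit of CM points on a quaternionic Shimura variety, and then to forbid a small spectral support for the resulting function on $\Pic_{K/F}^{\fc}$ by confronting a short linear recurrence with the Andr\'{e}--Oort theorem \cite{Ts, YZ, AGHP} on self-products of that Shimura variety.

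First I would reduce to periods. Fix $\chi_{0}\in\fX_{K,\kappa,\fc}$ with $\epsilon(\pi,\chi_{0})=(-1)^{i}$. The local root numbers of $\pi\otimes\chi$ single out a quaternion algebra $B/F$ (coherent when $i=0$, incoherent when $i=1$), and since (C1), the $\infty$-type $\kappa$ and the conductor $\fc$ are held fixed as $K$ varies, $B$ and the relevant open compact level may be taken independent of $K$. When $i=0$, Waldspurger's formula shows that $L(1/2,\pi,\chi)\neq 0$ is equivalent to the non-vanishing of a toric period $P_{\chi}(\varphi)$ for a suitable test vector $\varphi$ on $X=X_{B}$; when $i=1$, the Gross--Zagier formula (Yuan--Zhang--Zhang, and its motivic refinement in higher cohomological weight) shows that $L'(1/2,\pi,\chi)\neq 0$ is equivalent to the non-triviality of a Heegner point or cycle $P_{\chi}$ on $X$. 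In either case
\[
P_{\chi}\;=\;\sum_{\sigma\in\Pic_{K/F}^{\fc}}\chi(\sigma)\,\Phi(x_{\sigma}),
\]
where $\{x_{\sigma}\}_{\sigma}$ is a single Galois orbit of CM points on $X$ attached to $(K,\kappa,\fc)$, Shimura reciprocity identifying the Galois action with the $\Pic_{K/F}^{\fc}$-action, and $\Phi$ is a fixed Hecke-eigen section or cohomology class cut out by $\pi$ and the weight datum (for $i=1$ one uses the Galois conjugates of a fixed Heegner point together with the height pairing; the combinatorics below are identical).

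Now set $G=\Pic_{K/F}^{\fc}$, $h=|G|$; then $\fX_{K,\kappa,\fc}$ is a torsor under $\widehat{G}$, and Siegel's bound (or an effective substitute for CM fields) gives $h\gg_{\epsilon}|D_{K}|^{1/2-\epsilon}$. Writing $a$ for the function $\sigma\mapsto\Phi(x_{\sigma})$ (twisted by $\chi_{0}$), which one shows separately is nonzero for $|D_{K}|$ large, i.e.\ that at least one twist does not vanish, the quantity to bound below is $N=\#\{\chi:P_{\chi}\neq 0\}=\#\supp(\widehat{a})$. The crux of the reformulation: if $N<M$ then $a$ lies in the span of fewer than $M$ characters of $G$, hence in an $M$-dimensional subspace stable under the translations of $G$; consequently, for a split prime $\mathfrak{l}$ of $F$ with $[\mathfrak{L}]\in G$ of order at least $M$ there is a nonzero polynomial $P(T)=\prod_{i}(T-\zeta_{i})$ of degree below $M$, all of whose roots $\zeta_{i}$ are roots of unity (eigenvalues of a translation acting on characters), satisfying $\sum_{0\le j<M}c_{j}\,\Phi(x_{\sigma g^{j}})=0$ for all $\sigma\in G$, where $g=[\mathfrak{L}]$ and $x_{\sigma g^{j}}$ is (a branch of) the $j$-th iterate of the Hecke correspondence $T_{\mathfrak{l}}$ applied to $x_{\sigma}$. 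Thus the CM points $(x_{\sigma},\dots,x_{\sigma g^{M-1}})\in X^{M}$, as $\sigma$ and $K$ vary over an infinite subfamily, all lie on a subvariety cut out by such a recurrence. By the Andr\'{e}--Oort theorem their Zariski closure is a special subvariety of $X^{M}$; one then checks that no special subvariety is compatible with a non-trivial recurrence whose characteristic roots are roots of unity — on the diagonal and Hecke-diagonal special subvarieties this uses the Ramanujan bound to see that the relevant Hecke eigenvalue $\alpha_{\mathfrak{l}}$ and its powers are not roots of unity for suitable $\mathfrak{l}$, and on the remaining ones it uses the linear independence of the coordinate functions $\Phi(y_{j})$ together with the fact that the $c_{j}$ are not all zero. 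This yields a contradiction once $|D_{K}|$ is large relative to $M$, hence $N\geq M$ eventually, which already gives the final displayed assertion of the Conjecture. Quantifying the threshold through the effective forms of Andr\'{e}--Oort and of the lower bounds on Galois orbits of CM points (equivalently, on relative class numbers) allows $M$ to grow as a power of $h$, upgrading the conclusion to $N\gg_{\epsilon}|D_{K}|^{1/2-\epsilon}$. The same geometric input can also be packaged as an $L^{2k}$-moment inequality on $G$, where it becomes the statement that the ``toric $2k$-th energy'' of $a$ is dominated by its $k!$ diagonal terms, the off-diagonal part being negligible precisely because the corresponding CM tuples on $X^{2k}$ do not concentrate on proper special subvarieties.

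The hard part is this geometric input, made effective and uniform in $K$: controlling the Zariski closure of the CM $M$-tuples on $X^{M}$ with explicit dependence on $M$ (equivalently on the number of moments), which is where the quantitative Andr\'{e}--Oort theorem and unconditional lower bounds on Galois orbits of CM points are indispensable — a purely qualitative use of Zariski density delivers only $N\to\infty$. One must also arrange the split prime $\mathfrak{l}$ to have controlled norm, order of $[\mathfrak{L}]$ at least $M$, and Hecke eigenvalue not a root of unity, uniformly over an infinite subfamily of $K$. Subsidiary preliminaries to be settled first are the uniform existence of the local test vector $\varphi$ (local Waldspurger/Tunnell--Saito), the $K$-independence of $B$ and of the level under (C1), (C2) and fixed $\kappa$, the non-vanishing of at least one twist, and a uniform choice of Hecke-eigen $\Phi$ with a canonical trivialisation along the CM orbit, so that the period identity and the recurrence manipulation hold verbatim.
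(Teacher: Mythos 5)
The statement you are proving is \emph{Conjecture}~\ref{NV}: the paper does not prove it, and explicitly says so (Remark~1.2(2)). What the paper establishes are Theorems~\ref{rank1} and~\ref{rank0}, i.e.\ only the qualitative ``in particular'' assertion $\lim_K N_K=\infty$, and only under extra hypotheses (for $i=1$: $\pi$ of parallel weight two attached to a $\GL_2$-type abelian variety and $\chi$ of finite order; for $i=0$: a weight restriction when $k_\sigma>l+2\kappa_\sigma$ for all $\sigma$). Your outline does share the paper's overall architecture for that qualitative part --- Waldspurger/Gross--Zagier to pass to toric periods and Heegner points, Fourier analysis on $\Pic_{K/F}^{\fc}$, and Andr\'e--Oort on self-products of the quaternionic Shimura variety --- but as a proof of the Conjecture it has genuine gaps.

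The decisive gap is the quantitative bound $N\gg_{\epsilon,\pi}|D_K|^{1/2-\epsilon}$. Your claim that ``effective forms of Andr\'e--Oort\dots allow $M$ to grow as a power of $h$'' is not substantiated and is not delivered by any known result: one would need uniform control of the Zariski closure of $M$-tuples of CM points in $X_U^{M}$ with $M$ polynomial in the class number and with all constants uniform in $M$ and $K$. The paper concedes exactly this point (``at the expense of a non-quantitative result''); the only quantitative case it obtains is the totally definite rank-zero case, via subconvexity and equidistribution, and even there the exponent is some $\delta>0$, not $1/2$. Two further gaps: (i) for $i=1$ in cohomological weight $>2$, the equivalence $L'(1/2,\pi,\chi)\neq 0\iff P_\chi\neq 0$ for Heegner \emph{cycles} requires the conjectural non-degeneracy of the Beilinson--Bloch height pairing --- this is precisely why the paper restricts to parallel weight two and abelian varieties; (ii) in the Heegner-point case the Fourier inversion takes place in $A(H_{K,\fc})\otimes\BQ$, so one must prove the uniform finiteness of $\bigcup_K A(H_{K,\fc})_{\tor}$ (the paper's Proposition~\ref{torsion}, via Ribet--Momose Galois-image results); your outline omits this entirely.

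Finally, your mechanism for exploiting small spectral support is shakier than the paper's. You require a split prime $\mathfrak{l}$ whose class has order $\geq M$ in $\Pic_{K/F}^{\fc}$, with controlled norm, uniformly over the family --- existence of such primes is itself a hard open problem (it is the ``enough small split primes'' hypothesis of Michel--Venkatesh that the paper is explicitly designed to avoid). The paper instead takes \emph{arbitrary} tuples $(P^{\sigma_1},\dots,P^{\sigma_\ell})$ of Galois conjugates and proves their Zariski density in $X_U^{\ell}$ (Theorems~\ref{density1}, \ref{density2}), using the classification of special subvarieties plus Brauer--Siegel growth of relative class numbers to exclude proper special closures; the contradiction then comes from a bounded annihilator (Lemma~\ref{lemMV}) or a determinant identity forcing $f^{\ell}$ to vanish on a dense set. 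Your version --- ``the closure is special, and no special subvariety is compatible with the recurrence'' --- conflates two separate steps: the closure could a priori be a proper special subvariety for reasons having nothing to do with the recurrence, and the locus cut out by the recurrence is a divisor, not a special subvariety; you must first prove density and only then derive the contradiction. As written, the proposal establishes neither the Conjecture nor, without substantial repair, the weaker theorems the paper actually proves.
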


\begin{remark}
%
(1). For a positive proportion of $\chi\in \frak{X}_{K,\kappa,\fc}$, we have  
$$
\epsilon(\pi,\chi)=\epsilon(\pi,\chi_{0}).
$$ 
For any $0 < \epsilon' < 1/2$, from the Brauer-Siegel bound
$$
|D_{K}|^{1/2+\epsilon'}\gg_{\epsilon',\fc}
|\frak{X}_{K,\kappa,\fc}| \gg_{\epsilon', \fc} |D_{K}|^{1/2-\epsilon'}.
$$
It may be further conjectured that the non-vanishing holds for a positive proportion of the twists.
In general, even the growth in the number of non-vanishing twists with the discriminant seems non-trivial to show. 


(2). The above conjecture does not seem to be explicitly stated in the literature. However, it is a consequence of general conjectures on the non-vanishing in self-dual families.

\end{remark}






We now describe the results.

In the case of root number $-1$, we restrict to $\pi$ with parallel weight two and consider finite order Hecke characters.
Suppose that $\pi$ is associated to a $\GL_2$-type abelian variety $A$ over the totally real field $F$ and that $\chi_{0}$
is of finite order.

Our main result is the following.

\begin{thm} \label{rank1}
Let $F$ be a totally real number field, $\BA$ the ring of adeles and $\pi$ a cuspidal automorphic
representation of $\GL_{2}(\BA)$ with parallel weight two.
Suppose that $\pi$ corresponds to a $\GL_2$-type abelian variety over $F$.
For an integral ideal $\fc$, let $\Theta_{\fc}$ be  an infinite set of CM quadratic extensions $K/F$ such that there exists a finite order Hecke character
$\chi_0$ over $K$ satisfying the above conditions (C1) and (C2) along with
$$
\epsilon(\pi,\chi_0)=-1.
$$
(1). For $\Theta \subset \Theta_\fc$ infinite, we have
$$\lim_{K\in \Theta} \#\Big\{ \chi\in \fX_{K,\fc} \Big|\ L'(1/2, \pi, \chi)\neq 0 \Big\}=\infty.$$
(2). Let $\CO=\End_F(A)$ and suppose that $(\cO \otimes \BQ) \cap K = \BQ$ for all but finitely many $K \in \Theta$. 

Then, we have
$$\lim_{K\in \Theta} \#\Big\{ \chi\in \fX_{K,\fc} \Big| \ \dim A(\ov{F})^\chi=[\CO:\BZ], 
\Sha(A_{/H_{K, \fc}})^\chi\ \text{is finite} \Big\}=\infty.$$ 
Here $A(\ov{F})^\chi$ (resp. $\Sha(A/H_{K, \fc})^\chi$) denotes the $\chi$-isotypic component of $A(\ov{F})$ (resp. $\Sha(A_{/H_{K, \fc}})$) and $H_{K,\fc}$ the ring class field of $K$ with conductor $\fc$. 
\end{thm}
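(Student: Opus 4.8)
The plan is to reduce Theorem \ref{rank1} to a statement about the Zariski density of CM points on a self-product of a quaternionic Shimura variety, and then to invoke the Gross--Zagier formula to translate the non-vanishing of $L'(1/2,\pi,\chi)$ into the non-triviality of the corresponding Heegner point. First I would fix, for each $K \in \Theta_\fc$, the homogeneous space $\fX_{K,\fc}$ under $\Pic_{K/F}^\fc$ and observe that the Heegner points $P_\chi$ (obtained from a fixed modular parametrisation of $A$ by a Shimura curve $X$ over $F$, with level structure governed by $\fn$ and $\fc$) are indexed by the same class group; by Gross--Zagier (in the general totally real / Shimura curve form, after Yuan--Zhang--Zhang), $L'(1/2,\pi,\chi)\neq 0$ if and only if the $\chi$-component of the Heegner divisor class in $A(H_{K,\fc})\otimes\BC$ is non-zero, equivalently the collection of CM points $\{[\tau_a]\}_{a\in\Pic_{K/F}^\fc}$ does not lie in a proper "bad" locus cut out by the vanishing of all the $\chi$-twisted sums. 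The key observation is that if even a single CM packet of large enough size escapes the exceptional locus, then a dimension count forces a growing number of characters $\chi$ with $L'(1/2,\pi,\chi)\neq 0$.

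The central step is the geometric input: I would consider the self-product $X^n$ (or rather the relevant quaternionic Shimura variety and its powers), and note that the diagonal CM orbit attached to $K$ — the $\Gal(H_{K,\fc}/K)$-orbit of a CM point, or equivalently its image under the class group action — is a special subvariety in the sense of the André--Oort conjecture. As $K$ ranges over the infinite set $\Theta$, the discriminant $|D_K|\to\infty$, and by the Brauer--Siegel-type lower bound quoted in the Remark, $|\fX_{K,\fc}|=|\Pic_{K/F}^\fc|\gg_\epsilon |D_K|^{1/2-\epsilon}\to\infty$. The André--Oort conjecture (now a theorem, \cite{Ts,YZ,AGHP}) then guarantees that any infinite collection of these CM orbits is Zariski dense in $X^n$ unless almost all of them are contained in a fixed proper special subvariety; the hypothesis $(\CO\otimes\BQ)\cap K=\BQ$ for almost all $K$ is exactly what rules out such a fixed special subvariety (it prevents the CM points from all lying on a Hecke translate coming from a common imaginary quadratic subfield or a common CM abelian subvariety). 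Hence the CM points are equidistributed/dense enough that, for $K$ with $|D_K|$ large, they cannot all lie in the exceptional locus where every $\chi$-twisted Heegner sum vanishes.

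Once density is in hand, I would run the following counting argument. Suppose for contradiction that for infinitely many $K\in\Theta$ the number $N_K:=\#\{\chi : L'(1/2,\pi,\chi)\neq 0\}$ stays bounded, say $N_K\le C$. The map $\chi\mapsto P_\chi$ realises $A(H_{K,\fc})^{\text{Heegner}}\otimes\BC$ as a sum of at most $C$ lines indexed by those $\chi$, so the Heegner points $\{P_a\}_{a\in\Pic_{K/F}^\fc}$ span a space of dimension $\le C$; translating back to $X^{|\Pic|}$, this says the CM point $(\tau_a)_a$ lands in a subvariety cut out by $\gg |\Pic_{K/F}^\fc| - C$ independent linear conditions — a proper subvariety whose "size" shrinks relative to the ambient one as $|D_K|\to\infty$, contradicting the density/André--Oort conclusion for $|D_K|$ sufficiently large. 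This yields part (1). For part (2), I would combine (1) with the Gross--Zagier formula and Kolyvagin's Euler system argument over $H_{K,\fc}$ (or its ring-class-field generalisation): non-triviality of $P_\chi$ forces $\dim A(\ov F)^\chi = [\CO:\BZ]$ and the finiteness of $\Sha(A_{/H_{K,\fc}})^\chi$, the condition $(\CO\otimes\BQ)\cap K=\BQ$ ensuring that $\chi$ is not self-conjugate so that the Euler system bound applies cleanly.

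The main obstacle I expect is making the "exceptional locus" from Gross--Zagier genuinely a proper special (or at least algebraic) subvariety to which André--Oort applies, and controlling it uniformly in $K$: the vanishing of all $\chi$-twisted Heegner sums is an algebraic condition on the CM point in $X^N$, but one must check it defines a Zariski-closed set whose non-triviality does not itself vary with $K$ in a way that defeats the density input, and one must verify that the archimedean/local behaviour prescribed by the fixed $\fc$ and $\fn$ is compatible with the Shimura-variety model across all $K\in\Theta$. Handling the ring class field $H_{K,\fc}$ (rather than the Hilbert class field) and the action of $\Pic_{K/F}^\fc$ rather than the full $\Gal(\ov K/K)$ requires the version of André--Oort for the relevant Kuga--Sato / quaternionic Shimura varieties with the correct level, which is where the cited recent work is essential.
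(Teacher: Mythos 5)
Your high-level strategy --- Gross--Zagier to convert $L'(1/2,\pi,\chi)\neq 0$ into non-triviality of a Heegner point, Zariski density of CM points on self-products of the Shimura curve via Andr\'e--Oort, and a linear-algebra counting argument --- is indeed the paper's strategy, but the counting step as you have written it has a genuine gap. You place the CM tuple $(\tau_a)_a$ in $X^{|\Pic_{K/F}^{\fc}|}$ and argue that boundedness of $N_K$ confines it to a proper subvariety; however the number of factors $|\Pic_{K/F}^{\fc}|$ grows with $|D_K|$, so there is no \emph{fixed} ambient self-product on which to apply the density statement, and the linear relations you extract from $\dim W_K\leq C$ have coefficients that a priori depend on $K$ and are unbounded, so they do not define a fixed exceptional locus. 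The paper resolves exactly this point: assuming $N_K=k$ for infinitely many $K$, the Michel--Venkatesh annihilator lemma (Lemma \ref{lemMV}) produces, for any $2k$ elements $t_1,\dots,t_{2k}$ of the class group, integers $n_i$ with $|n_i|$ bounded by a constant depending only on $k$ such that $\sum_i n_i f(P_K)^{\sigma_{t_i}}=0$ in $A(H_{K,\fc})_{\BQ}$. This confines the image of $C_K^{2k}\to A^{2k}$ to a union of ``hyperplanes'' drawn from a \emph{finite} list independent of $K$, inside the \emph{fixed} variety $X_U^{2k}$, and that is what contradicts the density of Theorem \ref{density1}. Without the bounded annihilator (or a substitute) your contradiction does not close.

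Two further points. First, the relations above hold only in $A(H_{K,\fc})\otimes\BQ$, so to get finitely many algebraic conditions on $A^{2k}$ one must control torsion uniformly in $K$; the paper proves (Proposition \ref{torsion}, via the Ribet--Momose Galois image results for $\GL_2$-type abelian varieties) that $\bigcup_K A(H_{K,\fc})_{\tor}$ is finite, an ingredient absent from your proposal, and without it the relations $\sum n_iQ_i+T=0$ range over infinitely many $T$. Second, you misattribute the hypothesis $(\CO\otimes\BQ)\cap K=\BQ$: it plays no role in the density step (ruling out proper special subvarieties in Theorem \ref{density1} is done via Brauer--Siegel growth of class numbers, not via this condition), and it is needed only in part (2), where the paper passes from $P_f(\chi)\neq 0$ to the rank and $\Sha$ statements by quoting Kolyvagin and Nekov\'a\v{r}. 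You also correctly flag, but do not carry out, the uniformization across $K$: one must partition $\fX_{K,\fc}^{-}$ into finitely many $S$-types, fix one incoherent quaternion algebra $\BB$ per type via the local root number condition, and choose compatible embeddings $\iota_K$ (Lemma \ref{embedding}) so that a single test vector $f$ and a single level $U$ serve all $K\in\Theta$.
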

In view of the results of Kolyvagin (\cite{Ko}) and Nekov\'a\v{r} ™(\cite{N}), the second part follows from the first.

We describe a special case of Theorem \ref{rank1}.
\begin{cor}Let $E$ be an elliptic curve over $\BQ$ and $c$ a positive integer.
Let $\Theta$ be an infinite set of imaginary quadratic fields $K$ such that the Rankin--Selberg convolution $L(s, E, \chi)$ has root number 
$-1$
for at least one ring class character $\chi_{0}$ over $K$ 
with conductor $c$. For each $K\in \Theta$,  let $\fX_K$ denote the set of ring class characters over $K$ with conductor $c$ such that the Rankin--Selberg convolution $L(s, E, \chi)$
\footnote{Here we follow arithmetic normalisation.} has root number $-1$. 

Then, 
$$\lim_{K\in \Theta} \#\big\{ \chi\in \fX_K |\ L'(1, E, \chi)\neq 0 \big\}=\infty$$
and
$$ \lim_{K\in \Theta} \#\Big\{ \chi\in \fX_K \ \Big|\ \dim E(H_{K, c})^\chi=1,\
\Sha(E_{/H_{K, c}})^\chi\ \text{is finite}\ \Big\}=\infty.$$ 
Here $H_{K,c}$ denotes the ring class field of $K$ with conductor $c$ and $E(H_{K,c})^\chi$ (resp. $\Sha(E/H_{K, \fc})^\chi$) denotes the $\chi$-isotypic component of $E(H_{K,c})$ 
(resp. $\Sha(E_{/H_{K, c}})$). 
\end{cor}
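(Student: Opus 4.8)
The plan is to deduce the corollary as a direct specialisation of Theorem \ref{rank1}. First I would take $F=\BQ$, which is totally real with $I$ a single archimedean place, and let $A=E$, the given elliptic curve over $\BQ$; this is a $\GL_2$-type abelian variety over $F$ and the associated automorphic representation $\pi=\pi_E$ is cuspidal of parallel weight two, so the hypotheses on $\pi$ in Theorem \ref{rank1} hold. The conductor ideal $\fc\subset\cO_F=\BZ$ is the principal ideal $(c)$, so that the relative ideal class group $\Pic_{K/F}^{\fc}$ is just the ring class group of $K$ of conductor $c$ and $\fX_{K,\fc}$ is the set of ring class characters of conductor $c$ (with trivial central character condition (C1), automatic here since $\omega$ is trivial and $\kappa=0$). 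I would remark that for $F=\BQ$ the archimedean infinity type $\kappa$ is forced to be $0$, so the finite-order condition on $\chi_0$ is the only constraint beyond (C1)--(C2), and $\Theta_{\fc}$ as defined in Theorem \ref{rank1} is exactly the set of imaginary quadratic $K$ admitting a ring class character of conductor $c$ with root number $-1$.

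The one genuine point to address is the passage between the automorphic normalisation used in Theorem \ref{rank1} and the arithmetic normalisation used in the corollary. Under automorphic normalisation the centre of the functional equation is $s=1/2$, whereas under arithmetic normalisation it is $s=1$; the shift $s\mapsto s+1/2$ identifies $L^{\mathrm{aut}}(1/2,\pi,\chi)$ with $L^{\mathrm{arith}}(1,E,\chi)$ and likewise $L'^{\mathrm{aut}}(1/2,\pi,\chi)$ with $L'^{\mathrm{arith}}(1,E,\chi)$ up to a nonzero constant, and it identifies the two root numbers. Hence the condition ``$\epsilon(\pi,\chi)=-1$'' is the same in both normalisations, the set $\fX_K$ of the corollary coincides with $\fX_{K,\fc}$ of Theorem \ref{rank1}, and ``$L'(1,E,\chi)\neq 0$'' is equivalent to ``$L'(1/2,\pi,\chi)\neq 0$''. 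With these identifications in place, part (1) of the corollary is literally part (1) of Theorem \ref{rank1}.

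For the second assertion of the corollary I would apply part (2) of Theorem \ref{rank1} with $A=E$. Here $\CO=\End_{\BQ}(E)$. Since $E$ is an elliptic curve over $\BQ$, it has no CM over $\BQ$ (a CM elliptic curve over $\BQ$ acquires its extra endomorphisms only over the imaginary quadratic CM field), so $\CO=\BZ$ and $[\CO:\BZ]=1$; moreover the condition $(\CO\otimes\BQ)\cap K=\BQ$ reads $\BQ\cap K=\BQ$, which is automatic. Also $H_{K,\fc}=H_{K,c}$ is precisely the ring class field of conductor $c$, so the statement of Theorem \ref{rank1}(2) becomes $\dim E(\ov{\BQ})^\chi=1$ and $\Sha(E_{/H_{K,c}})^\chi$ finite; restricting $E(\ov{\BQ})^\chi$ to $E(H_{K,c})^\chi$ is harmless since the $\chi$-isotypic vectors already live in $E(H_{K,c})\otimes\ov{\BQ}$, as $\chi$ factors through $\Gal(H_{K,c}/K)$. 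This yields exactly the second displayed limit in the corollary.

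I do not expect a serious obstacle here: the corollary is a clean specialisation, and everything hinges on (i) checking that $F=\BQ$, $A=E$, $\fc=(c)$ satisfies the hypotheses of Theorem \ref{rank1}, (ii) the bookkeeping between the automorphic and arithmetic normalisations of the Rankin--Selberg $L$-function, and (iii) noting $\End_{\BQ}(E)=\BZ$ so the technical hypothesis in part (2) is vacuous. The only thing one must be slightly careful about is that $\Theta$ in the corollary is assumed infinite and consists of $K$ for which \emph{some} ring class character of conductor $c$ has root number $-1$; this is precisely the defining property of $\Theta_{\fc}$ in Theorem \ref{rank1}, so $\Theta\subset\Theta_\fc$ and the theorem applies verbatim.
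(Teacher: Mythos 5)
Your proposal is correct and is exactly the argument the paper intends: the corollary is presented as a direct specialisation of Theorem \ref{rank1} with $F=\BQ$, $A=E$, $\fc=(c)$ (plus the Kolyvagin--Nekov\'a\v{r} input already built into part (2) of that theorem), and the paper gives no further proof. The only bookkeeping you elide is that $\fX_K$ in the corollary is the root-number $-1$ subset of $\fX_{K,\fc}$ rather than all of it, but the non-vanishing twists produced by the theorem's proof arise from Heegner points and hence automatically lie in that subset, so nothing is lost.
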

\begin{remark}
A related question is whether  $$\displaystyle{\lim_{K\in \Theta} \rank_\BZ \left( \frac{E(H_{K, c})}
{\sum_{c'|c, c'\neq c} E(H_{K, c'})}\right)=\infty}.$$
The approach in the article does not seem to suffice. 
\end{remark}

In the case of classical Heegner hypothesis, we have the following consequence.
\begin{cor} Let $E$ be an elliptic curve over $\BQ$. Let $\Theta$ be  an infinite set of imaginary quadratic fields $K$ such that the base change $L(s, E_K)$ has root number $-1$. 

Then,  
$$\lim_{K\in \Theta} \rank_\BZ E(H_K) =\infty . 
$$
\end{cor}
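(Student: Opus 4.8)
The plan is to deduce this from the first corollary above (for elliptic curves over $\BQ$) by decomposing the Mordell--Weil group over $H_K$ along characters of $\Gal(H_K/K)\simeq\Pic(\cO_K)$. First I would record the elementary fact that, $\Gal(H_K/K)$ being a finite abelian group acting $\BC$-linearly on the finite-dimensional space $E(H_K)\otimes_\BZ\BC$, one has the isotypic decomposition
$$
E(H_K)\otimes_\BZ\BC=\bigoplus_{\chi}\big(E(H_K)\otimes_\BZ\BC\big)^{\chi},
$$
the sum running over all characters $\chi$ of $\Pic(\cO_K)$, and hence $\rank_\BZ E(H_K)=\sum_{\chi}\dim_\BC\big(E(H_K)\otimes_\BZ\BC\big)^{\chi}$. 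Here $\dim_\BC(E(H_K)\otimes_\BZ\BC)^{\chi}$ is the quantity denoted $\dim E(H_K)^{\chi}$ in the first corollary (with $\CO=\End_\BQ E=\BZ$, so $[\CO:\BZ]=1$).

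Next I would check that the hypotheses of the first corollary hold for the given $\Theta$ and conductor $c=1$: for $K\in\Theta$ the trivial Hecke character $\chi_0=\mathbf 1$ of $K$ is a ring class character of conductor $1$ satisfying (C1) and (C2) (with $\kappa=0$, $\fc=\cO_F$), and its Rankin--Selberg convolution with $E$ is the base change $L$-function, $L(s,E,\mathbf 1)=L(s,E_K)$, of root number $\epsilon(E_K)=-1$ by assumption. (Under the classical Heegner hypothesis one in fact has $\epsilon(E,\chi)=-1$ for every $\chi\in\wh{\Pic(\cO_K)}$, so $\fX_K=\wh{\Pic(\cO_K)}$; only the existence of one such character is needed.) The first corollary then gives in particular
$$
\lim_{K\in\Theta}\#\big\{\chi\in\fX_K\ \big|\ \dim E(H_K)^{\chi}=1\big\}=\infty,
$$
where $\fX_K\subseteq\wh{\Pic(\cO_K)}$ is the set of conductor-$1$ ring class characters of $K$ with root number $-1$.

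Finally I would combine the two: each term $\dim_\BC(E(H_K)\otimes_\BZ\BC)^{\chi}$ is non-negative and is $\geq 1$ whenever $\chi\in\fX_K$ has $\dim E(H_K)^{\chi}=1$, so for every $K\in\Theta$
$$
\rank_\BZ E(H_K)\ \geq\ \#\big\{\chi\in\fX_K\ \big|\ \dim E(H_K)^{\chi}=1\big\},
$$
and as $K$ ranges over $\Theta$ the right-hand side tends to $\infty$, hence so does $\rank_\BZ E(H_K)$. All the real work sits in the first corollary, which may be assumed; the present deduction is formal, and the only point calling for a little care — the closest thing to an obstacle here — is the identification of the arithmetically normalised $L(s,E,\mathbf 1)$ with the base change $L(s,E_K)$ and the compatibility of their root numbers.
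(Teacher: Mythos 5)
Your proposal is correct and follows exactly the route the paper intends: the corollary is stated as an immediate consequence of the preceding one, obtained by taking $c=1$, observing that the trivial character is a conductor-one ring class character with $L(s,E,\mathbf 1)=L(s,E_K)$ of root number $-1$, and bounding $\rank_\BZ E(H_K)$ below by the number of characters $\chi$ with $\dim E(H_K)^\chi=1$ via the isotypic decomposition of $E(H_K)\otimes_\BZ\BC$.
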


We now turn to the case of root number $+1$.
In this case, our result is rather general with mild hypothesis on the weight of the pair $(\pi,\chi_0)$ satisfying the conditions (C1) and (C2).
\begin{thm}\label{rank0}
Let $F$ be a totally real number field, $\BA$ the ring of adeles and $\pi$ a cuspidal 
cohomological automorphic
representation of $\GL_{2}(\BA)$.
For an integral ideal $\fc$, let $\Theta_{\kappa,\fc}$ be  an infinite set of CM quadratic extensions $K/F$ such that there exists a Hecke character
$\chi_{0} \in \fX_{K,\kappa,\fc}$ over $K$ satisfying the conditions (C1) and (C2) along with
$$
\epsilon(\pi,\chi_0)=+1.
$$
If $k_{\sigma} > l+2\kappa_{\sigma}$ for all $\sigma \in I$, then suppose that $\pi$ is with parallel weight two.

For $\Theta \subset \Theta_{\kappa,\fc}$ infinite, we have
$$\lim_{K\in \Theta} \#\Big\{ \chi\in \fX_{K,\kappa,\fc} \Big|\ L(1/2, \pi, \chi)\neq 0 \Big\}=\infty.$$
\end{thm}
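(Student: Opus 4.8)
The plan is to prove Theorem~\ref{rank0} by combining the Waldspurger formula with the density of CM cycles on self-products of a quaternionic Shimura variety, the latter extracted from the André--Oort conjecture in the cases of \cite{Ts,YZ,AGHP}. Fix $\chi_0\in\fX_{K,\kappa,\fc}$ with $\epsilon(\pi,\chi_0)=+1$; since the global root number is $+1$, the set of places $v$ with $\epsilon_v(\pi_v,\chi_{0,v})=-1$ has even cardinality, and I let $B=B_K$ be the quaternion algebra over $F$ ramified exactly there, with Jacquet--Langlands transfer $\pi^{B}$ of $\pi$. Using the hypothesis on the weight one arranges that either the Shimura variety $X=X_{B}$ is positive-dimensional, or $B$ is totally definite and $X$ is a finite Shimura set; both cases occur. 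By Waldspurger's formula, for every $\chi\in\fX_{K,\kappa,\fc}$,
\[
L(1/2,\pi,\chi)\neq 0\ \Longleftrightarrow\ \ell_\chi|_{\pi^{B}}\neq 0,\qquad \ell_\chi(\phi)=\int_{T(F)\backslash T(\BA)}\phi(t)\,\chi^{-1}(t)\,dt,
\]
where $T=\Res_{K/F}\BG_m\hookrightarrow B^\times$; with an optimal local test vector the archimedean and ramified integrals contribute a fixed non-zero constant, so non-vanishing of $\ell_\chi$ is detected by a single vector $\phi=\phi_{\mathrm{test}}\in\pi^{B}$.

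The next step is to recast the count Fourier-analytically. Put $G=\Pic_{K/F}^{\fc}$, so that $\fX_{K,\kappa,\fc}$ is a $\widehat G$-torsor through $\chi_0\mapsto\chi_0\psi$. Up to the fixed constant above, $\ell_{\chi_0\psi}(\phi_{\mathrm{test}})$ is the finite Fourier coefficient $\widehat g(\psi)$ of
\[
g\colon G\longrightarrow\BC,\qquad g(\sigma)=\chi_0^{-1}(\sigma)\,\phi_{\mathrm{test}}(x_\sigma),
\]
where $\{x_\sigma:\sigma\in G\}$ is the CM orbit on $X$ attached to $(K,\kappa,\fc)$, on which $G$ acts simply transitively and Galois acts through the reciprocity map, and $\phi_{\mathrm{test}}(x_\sigma)$ is the value of $\phi_{\mathrm{test}}$ at $x_\sigma$ (a section of an automorphic bundle when the weight exceeds two, read off against the canonical CM trivialisation). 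Hence $\#\{\chi:L(1/2,\pi,\chi)\neq 0\}=\#\{\psi\in\widehat G:\widehat g(\psi)\neq 0\}$, and it suffices to show that $g_K$ is not contained in the span of any $N$ characters of $G$ once $|D_K|$ is large in terms of $N,\pi,\fc,\kappa$, since $\#G\gg_{\varepsilon,\fc}|D_K|^{1/2-\varepsilon}\to\infty$.

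Suppose not: for some $N$ and some infinite $\Theta'\subseteq\Theta$, $g_K$ lies in the span of $\le N$ characters for all $K\in\Theta'$, and (after shrinking $\Theta'$) $G$ has an element of order $>N$. The key structural fact is that translation by $\sigma\in G$ on the CM orbit is a Hecke correspondence: $\phi(x_{\tau\sigma})=(\gamma_\sigma\cdot\phi)(x_\tau)$ with $\gamma_\sigma\in B^\times(\BA_f)$ the image of an idele representing $\sigma$, supported away from $\fn\fc$. Choosing $\sigma=\sigma_K$ of order $>N$, the small-Fourier-support hypothesis yields $0\neq(a_i^{(K)})_{i=0}^N$ with $\sum_i a_i^{(K)}\,\phi(x_{\tau\sigma^i})=0$ for all $\tau\in G$ and all $\phi\in\pi^{B}$. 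In the positive-dimensional case this means the Galois orbit $\{(x_\tau,x_{\tau\sigma},\dots,x_{\tau\sigma^N}):\tau\in G\}$ of a CM point of $X^{N+1}$ lies on $Z_{[a^{(K)}]}\subseteq X^{N+1}$, the locus where $\sum_i a_i\phi(y_i)=0$ for all $\phi\in\pi^{B}$; the $Z_{[a]}$, $[a]\in\BP^{N}$, form an algebraic family of proper closed subvarieties of $X^{N+1}$ of degree bounded in terms of $N$ and the weight of $\pi$ --- proper since cusp forms in $\pi^{B}$ are non-constant and separate points in the tower. Since these subvarieties lie in a fixed bounded family yet each contains a Galois orbit of CM points of $X^{N+1}$ of size $\gg|D_K|^{1/2-\varepsilon}$ with $|D_K|\to\infty$, André--Oort for $X^{N+1}$ (the general case \cite{Ts,YZ,AGHP}) together with the lower bounds for Galois orbits of special points forces a contradiction for $|D_K|$ large. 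When $B$ is totally definite one argues instead by the equidistribution of Gross points in the finite set $X$: it forces $(\sum_i a_i^{(K)}\gamma_\sigma^i)\cdot\phi$ to vanish on all of $X$, hence on $\pi^{B}$, whence $(a_i^{(K)})=0$ by the linear independence of the $\gamma_\sigma$-translates of the spherical test vector at the auxiliary split prime --- again a contradiction. This would complete the proof of Theorem~\ref{rank0}.

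The step I expect to be the main obstacle is the geometric one: turning ``$g_K$ has few non-zero Fourier coefficients'' into a statement about CM points of a self-product lying on a proper subvariety whose degree --- and whose a priori list of possibly-containing special subvarieties --- is bounded independently of $K$, so that a single invocation of André--Oort (rather than one per $K$) yields the contradiction; the uniformity in $K$, and the classification of which special subvarieties of $X^{N+1}$ can carry CM orbits attached to $K/F$, are the delicate points. Secondary technical burdens are the book-keeping of local test vectors and coherent-cohomology realisations in Waldspurger's formula in the general cohomological case (this is where parallel weight two is needed in the regime $k_\sigma>l+2\kappa_\sigma$), and, in the definite case, the precise equidistribution statement with a power-saving error.
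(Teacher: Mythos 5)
Your overall strategy (Waldspurger, Fourier analysis on $\Pic^{\fc}_{K/F}$, Andr\'e--Oort for self-products of the quaternionic Shimura variety) is the paper's, but the geometric step where you convert ``few non-zero Fourier coefficients'' into a contradiction is different from the paper's and, as written, has a genuine gap. Your linear relation $\sum_i a_i^{(K)}\phi_{\mathrm{test}}(x_{\tau\sigma^i})=0$ places the tuples $(x_\tau,x_{\tau\sigma},\dots,x_{\tau\sigma^N})$ on a bounded-degree subvariety of $X^{N+1}$, and you then invoke Andr\'e--Oort plus Galois-orbit lower bounds. But these tuples are \emph{never} Zariski dense in $X^{N+1}$: they all lie on the positive-dimensional special subvariety $W_\sigma$ given by the skewed diagonal $y\mapsto(y,\gamma_\sigma y,\dots,\gamma_{\sigma^N}y)$ (exactly the ``Hecke type'' special subvarieties of Lemma 2.2), so Andr\'e--Oort is perfectly consistent with the containment and yields no contradiction unless you first show $W_\sigma\not\subset Z_{[a^{(K)}]}$. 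That containment fails precisely when the Hecke translates $\gamma_{\sigma^i}\cdot\phi_{\mathrm{test}}$, $0\le i\le N$, are linearly independent in $\pi^{B}$ --- a statement you assert only in the definite case via a spherical vector at an ``auxiliary split prime.'' Establishing it requires the class $\sigma$ of order $>N$ to be represented by a small split prime, which is exactly the hypothesis of Michel--Venkatesh that this theorem is meant to remove; for a general $K$ in $\Theta$ no such representative need exist. A secondary issue: twisting $\chi_0$ by characters of $\Pic^{\fc}_{K/F}$ changes the local components at places dividing $\fc\infty$, hence the relevant quaternion algebra and test vector; one must first partition $\fX_{K,\kappa,\fc}$ into finitely many $S$-types and argue type by type, as in \S4.1.1 and \S5.

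The paper avoids the obstruction with a determinant trick. If at most $k-1$ twists have non-vanishing period, then for \emph{any} $k$-tuples $(t_i)$ and $(s_j)$ in $C_K$ one gets $\det\big(f(x_{s_jt_i})\big)=0$. This exhibits a single non-constant function $f_k$ on $X_U^{k^2}$ (independent of $K$; non-constancy uses that $f$ has a zero and is non-constant) vanishing on the ``multiplication table'' CM tuples $\Xi_k=\{(x_{\sigma_i\tau_j})\}$, and Theorem 2.4 shows that $\Xi_k$, ranging over all of $\Theta$, \emph{is} Zariski dense in $X_U^{k^2}$ --- unlike your arithmetic-progression tuples. So the density statement is applied to a fixed proper subvariety and a provably dense set, with no linear-independence input needed. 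In the totally definite case the paper does not argue by linear independence at all: it runs the quantitative Michel--Venkatesh argument (subconvexity, explicit Waldspurger, equidistribution of the $x_\sigma$, Brauer--Siegel), which is why Theorem 4.9 is stated with a power of $|D_K|$. If you want to rescue your variant in the indefinite case, you must either prove the linear independence of the $\gamma_{\sigma^i}$-translates for some class $\sigma$ of large order, or replace the single progression by the full $k\times k$ array as the paper does.
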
 
When $F=\BQ$ and $\fc=1$, the non-vanishing in Theorem \ref{rank1} goes back to Templier. 
For the results and various approaches to the non-vanishing, we refer to Templier's articles \cite{Te1}, \cite{Te} along with references therein.
Under the existence of enough small split primes in the imaginary quadratic fields, Michel--Venkatesh (\cite[Thm. 3]{MV1}) had earlier obtained analogous results. 
When the Rankin--Selberg convolution corresponding to pair $(\pi,\chi)$ is self-dual with root number 
$+1$ and the underlying quaternion algebra is indefinte, the non-vanishing in Theorem \ref{rank0} covers all the possibilities for the weight of the pair.  
When the underlying quaternion algebra is totally definite, the result covers key case of arithmetic interest with $\pi$ being of parallel weight $2$. In this case, the result is quantitative and goes back to Michel--Venkatesh  (\cite{MV1}, \cite{MV}) for $F=\BQ$ and $\fc=1$. 
When all components of the weight of Hecke characters are at least the weight of corresponding component of $\pi$ i.e. $$l+2\kappa_{\sigma} \geq k_{\sigma}$$ for all $\sigma \in \Sigma$, the non-vanishing was recently considered by Burungale--Hida (\cite{BH}).

We now describe the strategy.
It is automorphic/ geometric. The approach is based on the Gross--Zagier formula and the Waldspurger formula. 
An essential role is played by Zariski density of well chosen CM points on
self-products of a quaternionic Shimura variety. The latter crucially relies on the recent proof of the Andr\'e--Oort conjecture for
abelian type Shimura varieties due to Tsimerman (\cite{Ts}) building on earlier work towards the conjecture along with the recent results of Andreatta--Goren--Howard--Pera (\cite{AGHP}) and Yuan--Zhang (\cite{YZ}) towards the averaged Colmez conjecture. 

We follow the formulation of  the Gross--Zagier formula (\cite{GZ}) and the Waldspurger formula (\cite{Wa}) due to Yuan--Zhang--Zhang (\cite{YZZ}). In fact, their formulation plays an underlying role throughout. 
To begin with, the very formulation allows us an access to a general self-dual setup. 

Let
$$
\fX_{K}^{\mp}=\big{\{} \chi \in \fX_{K,\kappa,\fc}\big{|} \epsilon(\pi,\chi)=\mp 1\big{\}}.
$$
The approach crucially relies on the root number. Let $S$ be the set of places of $F$ dividing $\fn\fc\infty$. For a Hecke character $\chi=\otimes \chi_v$ over $K$ 
viewed as a character of $K_\BA^\times$ via class field theory, let 
$$\chi_S=\otimes_{v\in S}\chi_v: \prod_{v\in S} K_v^\times \ra \BC^\times$$ be its $S$-component. 
Some of the notation used here and in the sketch below is not followed in the rest of the article.

The set $\fX_{K}^{-}$ admits a finite partition according to the $S$-component  of the Hecke characters. 
For a fixed $S$-component $\chi_{0, S}$, there exists an incoherent quaternion algebra $\BB/\BA$ and a so-called $\chi_{0, S}$-toric line $V$ in $\pi_\BB$ such that $L'(1/2, \pi, \chi)\neq 0$ if and only if the Heegner point
$$P_f(\chi):=\int_{K_\BA^\times/K^\times \BA^\times} f(t) \chi(t) dt$$
is non-torsion for any non-zero vector $f\in V$. 
This is an inexplicit version of the Gross--Zagier formula (\cite{YZZ}).
Here $\pi_\BB$ denotes  the  Jacquet--Langlands transfer of $\pi$ to $\BB^{\times}$. 
Moreover, we fix an  embedding of $K_\BA$ into $\BB$ 
dependent on the $S$-component $\chi_{0, S}$. 
In the above expression for the Heegner point, $f$ is viewed as a modular parametrisation $f: X_{U} \ra A$ for Shimura curve $X_U$ arising from $\BB$ with level $U$ specified below and an abelian variety $A$ corresponding to $\pi$.

The space $V$ contains non-zero vectors and we fix such a form $f$ from now. 
Let $U$ be an open compact subgroup of $\BB^{(\infty),\times}$ such that $f$ is $U$-invariant. 
Here $\BB^{(\infty)}$ denotes the finite part. 
Let $X_U$ be the  Shimura curve of level $U$. We can choose $U$ such that for any pair $(K, \chi)$ with the fixed type $\chi_{0, S}$, the chosen embedding $K_\BA$ into $\BB$ induces a map $\varphi_K: \Pic_{K/F}^{\fc}\ra X_U$. The image of the map is referred as the CM points arising from
the relative class group $\Pic_{K/F}^{\fc}$.

An essential point is to study the non-vanishing of Heegner points $P_{f}(\chi)$ as the pair
$(K,\chi)$-varies with the $S$-component being $\chi_{0,S}$.
The non-vanishing of $P_f(\chi)$ begins with Fourier analysis on $\Pic_{K/F}^{\fc}$ 
and it's relation with the CM points arising from $\Pic_{K/F}^{\fc}$. 
Here we consider $f$ as a function on $\Pic_{K/F}^{\fc}$ with values in the Mordell--Weil group $A(H_{K,\fc})$ for the ring class field $H_{K,\fc}$.
Based on Shimura's reciprocity law and control on annihilation of a $\Gal(H_{K,\fc}/K)$-representation, we reduce the non-vanishing to
an Ax--Lindemann type functional independence for functions induced by
the modular form on
the class group $\Pic_{K/F}^{\fc}$.
The independence turns out to be closely related to the Zariski density of well-chosen CM points on a self-product of the Shimura curve $X_U$.
Here it is crucial to consider arbitrary self-products of the Shimura curve.
The CM points in consideration are the images of the CM points arising from the ideal classes in the CM quadratic extensions and we consider the density as the CM quadratic extensions vary in the infinite subset $\Theta$.
We show that the Zariski density follows from the Andr\'{e}--Oort conjecture for the self-product of the Shimura curve  $X_U$.

As the Fourier inversion works well only over the $\BQ$-rational space 
$A(H_{K,\fc})_{\BQ}:=A(H_{K,\fc}) \otimes_{\BZ} \BQ$, we need to keep track of the torsion $A(H_{K,\fc})_\tor$. Based on the results for Galois image of a $\GL_2$-type abelian variety, we show that the torsion 
$$
\bigcup_{K} A(H_{K,\fc})_\tor
$$
is finite as $K$ varies over the CM quadratic extensions of $F$. The finiteness allows the approach to go through.

  The set $\fX_{K}^{+}$ admits a finite partition according to the $S$-component  of the Hecke characters.
   For a fixed $S$-component $\chi_{0, S}$, there exists a quaternion algebra $B/F$ and a so-called $\chi_{0, S}$-toric line $V$ in $\pi_B$
   such that $L(1/2, \pi, \chi)\neq 0$ if and only if the toric period 
$$P_f(\chi):=\int_{K_\BA^\times/K^\times \BA^\times} f(t) \chi(t) dt$$
is non-zero for any non-zero vector $f\in V$. 
This is an inexplicit version of the Waldspurger formula (\cite{YZZ}). 
Here $\pi_B$ denotes  the  Jacquet--Langlands transfer of $\pi$ to $B_{\BA}^{\times}$ for $B_{\BA}=B \otimes_{F} \BA$. 
Moreover, we fix an  embedding of $K_\BA$ into $B_{\BA}^\times$ 
dependent on the $S$-component $\chi_{0, S}$. 
In the rest of the sketch, we suppose that $B$ is not totally definite. 
In the totally definite case, we closely follow the approach in \cite{MV1}. 

The space $V$ contains non-zero vectors and we fix such a form $f$ from now. 
Let $U$ be an open compact subgroup of $B_{\BA}^{(\infty),\times}$ such that $f$ is $U$-invariant. 
Here $B_{\BA}^{(\infty)}$ denotes the finite part. 
Let $X_U$ be the quaternionic Shimura variety of level $U$. We can choose $U$ such that for any pair $(K, \chi)$ with the fixed type $\chi_{0, S}$, the chosen embedding $K_\BA$ into $B_\BA$ induces a map $\varphi_K: \Pic_{K/F}^{\fc}\ra X_U$. The image of the map is referred as the CM points arising from
the relative class group $\Pic_{K/F}^{\fc}$.


An essential point is to study the non-vanishing of toric periods $P_{f}(\chi)$ as the pair
$(K,\chi)$-varies with the $S$-component being $\chi_{0,S}$.
The non-vanishing of $P_\chi(f)$ begins with Fourier analysis on $\Pic_{K/F}^{\fc}$ and its relation with the CM points arising from the class group $\Pic_{K/F}^{\fc}$.
Based on Shimura's reciprocity law, we reduce the non-vanishing to
to an Ax--Lindemann type functional independence for functions induced by
the modular form on
the class group $\Pic_{K/F}^{\fc}$.
The independence is essentially equivalent to the Zariski density of well-chosen CM points on a self-product of the quaternionic Shimura variety $X_U$.
The CM points in consideration are the images of CM points arising from the ideal classes in the CM quadratic extensions
by a skewed diagonal map and we consider the density as the CM quadratic extensions vary in the infinite subset $\Theta$.
We show that it follows from the Andr\'{e}--Oort conjecture for the self-product of the quaternionic Shimura variety $X_U$.

As evident from the sketch, there is an intriguing analogy among the case of root number $-1$ and the case of root number $+1$. The analogy seems to resonate resemblance among the Gross--Zagier formula and the Waldspurger formula. 
On the other hand, there are striking differences. Here we only mention that the Zariski density of CM points used  in the case of root number $-1$ differs from the one in the case of root number $+1$ (Theorem \ref{density1} and Theorem \ref{density2}, respectively).
 

As indicated earlier, a special case of Theorem \ref{rank0} was proven in \cite{BH}. In fact, our motivation partly came from \cite{BH}. The initial attempt was to seek analogue of the non-vanishing in the special case to the case of root number $-1$. Our approach is based on the root number $+1$ case in \cite{BH}. 

The non-vanishing over the family of twists by class group characters  has been considered in the literature in the case of root number $-1$. 
Here we mention \cite{MV1}, \cite{Te1}, \cite{Te} and refer to these articles for a quantitative version of Theorem \ref{rank1} in the case $F=\BQ$ and $\fc=1$ under certain hypotheses. The results in \cite{Te} are perhaps closest to our study of the Heegner points. 
In these articles, the approach perhaps has more analytic/ ergodic flavour. The key ingredients include 
\begin{itemize}
\item[--] an explicit version of the Gross--Zagier formula, 
\item[--] a subconvex bound for central derivatives of the Rankin--Selberg L-functions, 
\item[--] an equidistribution of CM points on a Shimura curve with respect to the complex-analytic topology and 
\item[--] a lower bound for N\'eron--Tate height of non-torsion Heegner points. 
\end{itemize}
In the split case, the geometric argument only uses Burgess bound for character sums which controls the average of toric period against Eisenstein series.  
It seems suggestive to compare our approach with the one in \cite{Te}. 
Here we only mention the following. For the analytic approach, the level of complexity seems to increase significantly while moving from the case of root number $+1$ to the case of root number $-1$. Somewhat surprisingly, 
the level of complexity seems to change only slightly in our approach at the expense of a non-quantitative result.

As is clear from the sketch, the perspective on the non-vanishing based on the Andr\'e--Oort conjecture is likely to admit several generalisations. 
We hope to investigate them in the near future. 
Arithmetic Gan--Gross--Prasad conjecture often relates 
non-triviality of a central derivative of a self-dual Rankin--Selberg L-function to the non-triviality of a period of a cycle-valued automorphic form. 
It would be especially interesting to consider the case of higher rank unitary groups. 
Along a different direction, a natural question would to explore horizontal non-vanishing modulo $p$ for a fixed prime $p$. 
In the setup of the current article, we refer to \cite{BHT} for certain results. 

As opposed to the horizontal case, the vertical or Iwasawa-theoretic non-vanishing of Heegner points has been much studied. Even though contrasting in appearance, there seem to be mysterious analogies among the current approach and the approach based on Chai--Oort rigidity principle (\cite{Bu2}, \cite{Bu3} and \cite{BD}). We refer to \cite{Bu1} for an overview of the latter. 

The article is organised as follows. In \S2, we describe results on the Zariski density of a class of CM points on self-products of a quaternionic Shimura variety. In \S2.1, we give an explicit description of special subvarieties of self-products of a quaternionic Shimura variety. In \S2.2, we prove the results on Zariski density.
In \S3, we consider the root number $-1$ case and describe the non-vanishing of Heegner points.
In \S3.1, we introduce the setup and fix an incoherent algebra $\BB$.
In \S3.2, we prove the horizontal non-vanishing of Heegner points arising from $\BB$.
In \S4, we consider the root number $+1$ case and describe the non-vanishing of toric periods.
In \S4.1, we prove the horizontal non-vanishing as long as the underlying quaternion algebra is not totally definite.
In \S4.2, we prove the horizontal non-vanishing when the underlying quaternion algebra is totally definite.
In \S5, we prove the main results based on \S2--\S4 along with the existence of suitable torus embeddings into the underlying quaternion algebras.

In the exposition, we often suppose familiarity with the formalism in \cite{YZZ}. For an overview, we refer to \cite[Ch. 1]{YZZ}.

\section*{Acknowledgement}
We are grateful to Haruzo Hida for encouragement and suggestions. 
The topic was initiated by the joint work of the first-named author with Haruzo Hida regarding horizontal non-vanishing (\cite{BH}). We thank Li Cai for his assistance and Hae-Sang Sun for stimulating conversations regarding horizontal non-vanishing. 
We thank Nicolas Templier for helpful comments and suggestions. 
We also thank Farrell Brumley, Henri Darmon, Najmuddin Fakhruddin, Dimitar Jetchev, Mahesh Kakde, Chandrashekhar Khare, Philippe Michel, C.-S. Rajan, Jacques Tilouine, Vinayak Vatsal, Xinyi Yuan, Shou-Wu Zhang and Wei Zhang 
for instructive conversations about the topic. 

The first named author is grateful to MCM Beijing for the continual warm hospitality. The article was conceived in Beijing during his first MCM visit \footnote{the summer of 2015}.

Finally, we are indebted to the referee.


\section*{Notation}
We use the following notation unless otherwise stated.

For a finite abelian group $G$, let $\widehat{G}$ denote $\overline{\BQ}^\times$-valued character group of $G$. For a $\BZ$-module $A$, let $\widehat{A}=A \otimes_{\BZ} \widehat{\BZ}$ for $\widehat{\BZ}=\varprojlim_{n} \BZ/n$.

For a number field $L$, let $\cO_L$ be the corresponding integer ring and $D_L$ the discriminant.
Let $L_+$ denote the totally positive elements in $L$.
Let $G_{L}=\Gal(\overline{\BQ}/L)$ denote the absolute Galois group over $L$. Let $\BA_L$ denote the adeles over $L$. For a finite subset $S$ of places in $L$, let $\BA_{L}^{(S)}$ denote the adeles outside $S$ and $\BA_{L,S}$ the $S$-part.
When $L$ equals the rationals or an underlying totally real field, we drop the subscript $L$.
For a $\BQ$-algebra $C$, let $C_{\BA}=C\otimes_{\BQ} \BA$. Let $\widehat{C}^{(S)}$ (resp.
$C_{S}$) denote the part outside $S$ (resp. $S$-part) of
$C_{\BA}$.

For a place $v$ of $F$ for a totally real field $F$ and a quadratic extension $K_{v}/F_{v}$, let $\eta_{v}$ denote the corresponding quadratic character. For a quaternion algebra $B_{v}/F_{v}$, let $\epsilon(B_{v})$
denote the corresponding local invariant.
For a CM quadratic extension $K/F$ and an integral ideal $\fc$ of $F$, let $H_{K,\fc}$ be the ring class field with conductor $\fc$ and
$\Pic_{K/F}^{\fc}$ the relative ring class group with conductor $\fc$.
Let $h_{K}$ (resp. $h_{K,\fc}$) be the relative ideal class number of $K$ (resp. $H_{K,\fc}$) over $F$.

\section{Zariski density of CM points}


In this section, we consider the Zariski density of well-chosen CM points on self-product of a quaternionic Shimura variety.
The density plays an underlying role in the non-vanishing.

\subsection{Special subvarieties}
In this subsection, we give an explicit description of a class of special subvarieties of self-product of a quaternionic Shimura variety.

Let the notation and assumptions be as in \S1. In particular, $F$ is a totally real field.
Let $B$ be a quaternion algebra over $F$ and suppose that it is not totally definite. 
Let $G$ be the reductive group $\Res_{F/\BQ}B^{\times}$.

Let $X=\varprojlim X_{U}$ be the quaternionic Shimura variety associated to Shimura datum corresponding to $G$.
Here $U \subset G(\BA^{(\infty)})$ is an open compact subgroup and corresponds to level of the Shimura variety. 
Recall that $X$ is defined over reflex field corresponding to the Shimura datum. 
Here we only recall that in the case of Shimura curves, the reflex field is nothing but the totally real field $F$.

In what follows, we consider self-product of the Shimura variety $X_U$ for a
sufficiently small level $U$. Let $r=\dim(X_{U})$. Recall that $r$ equals the number of archimedean places of the totally real field $F$ which split the quaternion algebra $B$. The complex points of these varieties are given by
$$
X_{U}(\BC) =  G(\BQ) \backslash (\BC - \BR)^{r} \times G(\BA^{(\infty)}) / U .
$$
In particular, these varieties admit complex uniformisation by $\mathfrak{H}^{r}$ for the upper half plane $\mathfrak{H}$.

We first consider the case of special subvarieties of $X_U$ itself (\cite[Prop. 2.2]{BH}).
\begin{lem}
\label{special1}
The special subvarieties of the quaternionic Shimura variety $X_U$ arise from the reductive group $\Res_{E/\BQ}D^{\times}$ for
a subfield $E$ of the totally real $F$ and a quaternion algebra $D_{/E}$ with an embedding into the quaternion $B_{/F}$.

\end{lem}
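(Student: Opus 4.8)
The plan is to describe the special (Hodge-theoretic) subvarieties of $X_U$ in terms of sub-Shimura-data of the Shimura datum $(G, X)$ with $G = \Res_{F/\BQ} B^\times$, and then to match those sub-data with pairs $(E, D)$ as in the statement. Recall that a special subvariety is, by definition, an irreducible component of the image of a morphism of Shimura data $(H, Y) \to (G, X)$ followed by a Hecke translate; equivalently, it is the image of $Y \times \{g\} U / U$ for $H \subset G$ a $\BQ$-subgroup that is the Mumford--Tate group of some point and $Y$ the associated $H(\BR)$-conjugacy class. So the first step is to enumerate, up to conjugacy, the reductive $\BQ$-subgroups $H \subset G$ that arise this way: they must contain the image of some $h \colon \Res_{\BC/\BR}\mathbb{G}_m \to G_\BR$ in the Shimura class $X$, and we may take $H$ to be $\Res_{\BC/\BR}\mathbb{G}_m$-minimal, i.e. the generic Mumford--Tate group on the corresponding special subvariety.

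Next I would analyse such an $H$ concretely. Since $G = \Res_{F/\BQ}B^\times$, a connected reductive $\BQ$-subgroup $H$ whose derived group is $\BQ$-simple and nontrivial, and which supports a Shimura datum mapping to $X$, will be of the form $\Res_{E/\BQ} D^\times$ for a subfield $E \subseteq F$ and a quaternion algebra $D$ over $E$: this is the standard classification of connected $\BQ$-subgroups of $\Res_{F/\BQ}\GL_2$ whose $\BR$-points contain a compatible Hodge cocharacter (one uses that the centralizer-type condition forces the base field to be a subfield of $F$, and that over that subfield the group is an inner form of $\GL_2$, hence $D^\times$ for a quaternion algebra $D$). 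The inclusion $H \hookrightarrow G$ then amounts to a $\BQ$-algebra embedding $D \hookrightarrow B$ compatible with $E \hookrightarrow F$ — here one passes from the group embedding to the embedding of the associated quaternion algebras by taking the action on the standard faithful representation, or simply by noting $\Lie$. The constraint that the Shimura datum $Y$ of $H$ actually maps into $X$ (not just that $H \subset G$) is automatically satisfied once $D \hookrightarrow B$ because then every $h$ in the relevant conjugacy class for $(H,Y)$ becomes, after composing with $H(\BR) \to G(\BR)$, a point of $X$; one must only check the signature condition is inherited, which it is since the archimedean places of $E$ split by $D$ are among those of $F$ split by $B$.

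Conversely, given such a pair $(E, D)$ with an embedding $D \hookrightarrow B$ over $E \hookrightarrow F$, the subgroup $H = \Res_{E/\BQ}D^\times$ carries a natural sub-Shimura datum whose associated subvariety of $X_U$ is special of the predicted type, so both inclusions hold and the classification is complete. Because the statement of Lemma~\ref{special1} is the one-dimensional ($X_U$ itself, not a self-product) case, and is attributed to \cite[Prop.~2.2]{BH}, I would simply invoke that reference for the detailed verification and present here only the group-theoretic dictionary above. The main obstacle — modest in this one-factor case but the crux of the matter — is the classification step: showing that a $\BQ$-simple-derived reductive subgroup of $\Res_{F/\BQ}B^\times$ carrying a compatible Hodge structure is forced to have the shape $\Res_{E/\BQ}D^\times$ with $E \subseteq F$, rather than some exotic group with a larger or incompatible base field. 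This is where one uses that $B^\times$ over $F$ has no proper connected reductive $F$-subgroups of the right kind except tori (which give CM points, the zero-dimensional special subvarieties) and forms of $\GL_2$ over subfields; the self-product generalization in the next subsection will then be a matter of combining several such factors subject to a "tying-together" field condition, but that is beyond the present lemma.
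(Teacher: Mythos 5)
Your proposal is correct and matches the paper's treatment: the paper gives no argument for this lemma beyond the citation to \cite[Prop.~2.2]{BH}, which is exactly where you also defer the detailed verification. Your sketch of the underlying classification (sub-Shimura data of $(\Res_{F/\BQ}B^\times, X)$ whose generic Mumford--Tate group is forced to be $\Res_{E/\BQ}D^\times$ with $E\subseteq F$ and $D\hookrightarrow B$, plus the converse) is the standard argument behind that reference, so the approaches agree.
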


We now consider the case of self-product.
Let $S$ be a finite set. For $s \in S$, let $\pi_s$ be the projection to the $s$-component of the self-product $X_{U}^S$.
\begin{lem}
\label{special2}
Let $Z \subset X_{U}^{S}$ be a special subvariety with dominant projections $\pi_s$ onto an
irreducible component of $X_{U}$ for all $s \in S$.

Then, S has a partition $(S_{1},...,S_{k})$ such that $Z$ is a product of subvarieties $Z_i$ of $X_{i}=X_{U}^{S_{i}}$ which are the image of
$$  \fH^{r} \rightarrow (\fH^{r} /U)^{S_i}     $$ under the map
$$   \tau \mapsto ([g_{\sigma}(\tau)])_{\sigma \in S_{i}}.     $$
Here $\fH$ denotes the upper half complex plane and $g_{\sigma} \in G(\BQ)$ for $\sigma \in S_{i}$.
\end{lem}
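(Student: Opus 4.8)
The strategy is to reduce the classification of special subvarieties of the self-product $X_U^S$ to the classification on a single copy $X_U$ (Lemma \ref{special1}), using the general structure theory of special (weakly special) subvarieties of a product of Shimura varieties. First I would recall that every special subvariety of $X_U^S$ is a component of the image of a sub-Shimura datum $(H, X_H) \hookrightarrow (G^S, \mathfrak{H}^{\pm r\cdot|S|})$, where $H$ is a reductive $\BQ$-subgroup of $G^S$ whose $\BR$-points contain the image of a chosen $h \in X_H$. The key reduction is a Goursat-type lemma: since $Z$ has dominant projection $\pi_s(Z)$ onto an irreducible component of $X_U$ for each $s$, the group $H$ surjects (via $\pi_s$) onto the relevant quotient of $G$ for each $s$. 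One then analyzes $H$ as a subgroup of $G^S$ surjecting onto each factor.

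\textbf{Key steps.} (1) Pass to the adjoint/derived level, where $G^{\ad} = \Res_{F/\BQ}\PGL_2$ restricted to the split archimedean places; an almost-simple factor of $G^{\ad}$ is $\Res_{F/\BQ}(B^\times)^{\ad}$, which is $\BQ$-simple. (2) Apply Goursat's lemma to $H^{\der}$ (or its image in $(G^{\ad})^S$): a subgroup of a product of $\BQ$-simple groups surjecting onto each factor is, after grouping the factors into blocks on which the projections ``agree up to isogeny'', a product of diagonally-embedded copies — this produces the partition $(S_1,\dots,S_k)$ of $S$. (3) On each block $S_i$, the identification of the projections to distinct copies of $X_U$ is implemented by an element of $G(\BQ)$ (an automorphism coming from conjugation matching the two sub-Shimura data); call it $g_\sigma$ for $\sigma \in S_i$, with $g_{\sigma_0} = 1$ for a base point $\sigma_0 \in S_i$. (4) Conclude that $Z_i \subset X_U^{S_i}$ is the image of $\mathfrak{H}^r \to (\mathfrak{H}^r/U)^{S_i}$, $\tau \mapsto ([g_\sigma(\tau)])_{\sigma \in S_i}$, which is precisely a graph of Hecke-type correspondences; and that $Z = \prod_i Z_i$. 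One should also note, using Lemma \ref{special1}, that within each block the underlying object is $\Res_{E/\BQ}D^\times$ for a subfield $E \subset F$ and $D \hookrightarrow B$, though the statement as phrased only asks for the graph description with $g_\sigma \in G(\BQ) = \Res_{F/\BQ}B^\times(\BQ)$.

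\textbf{Main obstacle.} The delicate point is step (2)–(3): correctly running Goursat's lemma at the level of Shimura data rather than just abstract groups, and checking that the ``matching isomorphism'' between two projected copies of the sub-datum is induced by an honest element $g_\sigma \in G(\BQ)$ acting on $\mathfrak{H}^r$ (as opposed to a mere $G(\BQ)^+$-conjugacy or an element of a larger adjoint group). This requires tracking the components $(\BC - \BR)^r$ carefully and using that automorphisms of the relevant sub-Shimura datum preserving the special point come from $G(\BQ)$ — here the strong approximation properties of $B^\times$ and the precise shape of the Shimura datum for a quaternion algebra are what make it work. A secondary technical nuisance is the passage between ``special'' and ``weakly special'' (equivalently, keeping track of the central torus of $H$), which does not affect the final geometric description of $Z$ but must be handled to justify that $Z$ is literally the stated image and not merely isogenous to it.
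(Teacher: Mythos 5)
Your proposal is correct and follows essentially the same route as the paper: the paper's proof is precisely the combination of Lemma \ref{special1} with Goursat's lemma applied to the reductive subgroup of $G^{S}$ defining the special subvariety, with details deferred to \cite[Prop. 2.3]{BH}. Your fleshed-out version (surjectivity onto each factor from the dominance hypothesis, grouping factors into Goursat blocks, and realizing the matching isomorphisms by elements $g_{\sigma}\in G(\BQ)$) is exactly the intended argument.
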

\begin{proof}
The classification is a consequence of Lemma \ref{special1} and Goursat's lemma. 
For details, we refer to \cite[Prop. 2.3]{BH}.
\end{proof}

\subsection{Zariski density} In this subsection, we consider the Zariski density of well-chosen CM points on self-product of a quaternionic Shimura variety. For the relevant CM theory, we refer to \cite{Sh}.

Let the notation and assumptions be as in \S2.1. In particular, $X_U$ denotes a quaternionic Shimura variety with level $U$. 

Let $K/F$ be a CM quadratic extension such that there exists an embedding $\iota_K : K \hookrightarrow B$. Let $\fc$ be an ideal of $\cO_F$. In view of the CM theory, the embedding gives rise to a map 
$$\varphi_{K,\fc}: \Pic_{K/F}^{\fc} \ra X_{U/\ov{\BQ}}.$$ The image is usually referred as the CM points with CM by $K$ and conductor $\fc$. 
For $\sigma \in \Pic_{K/F}^{\fc}$, let $P_{K,\fc}^{\sigma}$ be the corresponding CM point arising from $\varphi_{K,\fc}$. 
Let $P_{K,\fc}$ be the image of the identity ideal class. 

Our first result regarding the Zariski density is the following. 

\begin{thm}\label{density1} 
Let $F$ be a totally real field and $\fc$ an ideal of the integer ring $\cO_F$. 
Let $B$ be a quaternion algebra over $F$ such that it is not totally definite and $X_U$ the corresponding quaternionic Shimura variety with level $U$. 
Let $\Theta$ be an infinite set of CM quadratic extensions $K/F$ as above. Let
$\ell$ be a positive integer and
$$
\Delta_{\ell,\fc}=\{ (P_{K,\fc}^{\sigma_{i}})_{1 \leq i \leq \ell}| \sigma_{i} \in \Pic_{K/F}^{\fc}, K \in \Theta \}
$$
a subset of CM points in the
$\ell$-fold self-product $X_{U/\overline{\mathbb{Q}}}^{\ell}$. 

Then, $\Delta_{\ell,\fc}$ is Zariski dense.

\end{thm}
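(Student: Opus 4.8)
\textbf{Proof strategy for Theorem \ref{density1}.}

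The plan is to reduce Zariski density to the Andr\'e--Oort conjecture for the $\ell$-fold self-product $X_{U/\ov{\BQ}}^{\ell}$, which holds by \cite{Ts} (building on \cite{AGHP}, \cite{YZ}). Let $Z$ be the Zariski closure of $\Delta_{\ell,\fc}$, and let $Z'$ be an irreducible component of $Z$ containing infinitely many of the CM points $(P_{K,\fc}^{\sigma_i})_{1\leq i\leq\ell}$ with $K$ ranging over an infinite subset $\Theta'\subset\Theta$. Each such point is a special point of $X_U^{\ell}$ (being a product of CM points on the quaternionic Shimura variety $X_U$), so $Z'$ contains a Zariski-dense set of special points; by Andr\'e--Oort, $Z'$ is a special subvariety of $X_U^{\ell}$. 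The goal is then to show $Z' = X_U^{\ell}$, i.e. that no proper special subvariety can absorb infinitely many of our CM points.

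First I would check that, after possibly shrinking $\Theta'$ and rechoosing the representatives $\sigma_i$, the projection $\pi_s(Z')$ onto each factor is all of $X_U$ (or a fixed irreducible component thereof): indeed $\pi_s$ maps the CM points to CM points on $X_U$ with CM by the varying fields $K\in\Theta'$, and since $X_U$ has only finitely many CM points with CM by any \emph{fixed} field and bounded conductor while $|\Pic_{K/F}^{\fc}|\to\infty$ with $|D_K|$, the image is an infinite set of special points on $X_U$; the one-dimensional (or lower-dimensional component) version — equivalently Lemma \ref{special1} together with Andr\'e--Oort for $X_U$ — forces $\pi_s(Z')$ to be a special subvariety of $X_U$, and the only special subvariety meeting infinitely many distinct CM fields is $X_U$ itself, since a proper special subvariety of $X_U$ arises from $\Res_{E/\BQ}D^\times$ for a proper subfield $E\subsetneq F$ and a quaternion $D/E$, hence carries CM points only by extensions of that fixed $E$, of which there are finitely many with given conductor realizing a given local behaviour — here one uses that $K\supset F$ forces $E=F$, so the special subvariety is the whole factor. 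Hence the projections $\pi_s|_{Z'}$ are dominant onto an irreducible component of $X_U$, and Lemma \ref{special2} applies: $S=\{1,\dots,\ell\}$ admits a partition $(S_1,\dots,S_k)$ with $Z'=\prod_j Z'_j$, where $Z'_j\subset X_U^{S_j}$ is the image of $\fH^r\to(\fH^r/U)^{S_j}$ under $\tau\mapsto([g_\sigma(\tau)])_{\sigma\in S_j}$ for elements $g_\sigma\in G(\BQ)$.

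The crux is therefore to rule out any nontrivial block $S_j$ of size $\geq 2$ in this partition for infinitely many $K$. If $S_j=\{s,s'\}$ with $|S_j|=2$, membership of the CM point in $Z'_j$ means the $s$- and $s'$-components $P_{K,\fc}^{\sigma_s}$ and $P_{K,\fc}^{\sigma_{s'}}$ are related by a fixed Hecke-type correspondence $[g]$ on $X_U$, independent of $K$; equivalently $\sigma_{s'}\sigma_s^{-1}$ lies in a coset determined by $g$ under the action of $\Pic_{K/F}^{\fc}$ on the $K$-CM points (via Shimura reciprocity). Since the $\sigma_i$ are allowed to range over \emph{all} of $\Pic_{K/F}^{\fc}$ independently, the set of points we collect fills out, within each $X_U^{S_j}$, a full orbit that is \emph{not} constrained to such a diagonal-up-to-correspondence locus; more precisely one shows that for $|D_K|$ large the correspondence $[g]$ (there being only finitely many possibilities for its degree once $Z'$, hence the bounded complexity of $Z'_j$, is fixed) cannot match $\Pic_{K/F}^{\fc}$-translates for all choices of $\sigma_s,\sigma_{s'}$, because the number of CM points by $K$ in a bounded-degree correspondence orbit is $O(1)$ while $|\Pic_{K/F}^{\fc}|\to\infty$. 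This forces every block to be a singleton, whence $Z'=X_U^{\ell}$, and so $Z=X_U^{\ell}$, i.e. $\Delta_{\ell,\fc}$ is Zariski dense. The main obstacle is precisely this last combinatorial-geometric step: controlling, uniformly in $K\in\Theta$, the interaction between bounded-complexity special subvarieties (whose complexity is bounded once we know $Z'$ is \emph{some} proper special subvariety, but a priori only along a subsequence) and the growing Galois orbit $\Pic_{K/F}^{\fc}$; one handles it by noting that a decreasing sequence of special subvarieties stabilizes, so along an infinite $\Theta'$ one may assume $Z'$ and hence the data $(g_\sigma)$ are fixed, and then the orbit-counting bound gives the contradiction. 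For the detailed argument in the case $\ell=1$ and its extension, we follow \cite[\S2]{BH}.
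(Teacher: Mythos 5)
Your overall strategy coincides with the paper's: pass to an irreducible component of the Zariski closure, invoke the now-unconditional Andr\'e--Oort theorem to conclude it is special, use Lemmas \ref{special1} and \ref{special2} to classify the possibilities, and rule out the proper ones by playing the growth of $|\Pic_{K/F}^{\fc}|$ (Brauer--Siegel) against the bounded complexity of a fixed special subvariety. Your treatment of the product/partition case ($\ell\geq 2$) — a fixed Hecke correspondence of bounded degree cannot relate all pairs in a class-group orbit whose size tends to infinity — is exactly the paper's argument, phrased there in terms of isogenies of a fixed degree between the CM abelian varieties.

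There is, however, a genuine flaw in your justification of the dominance of the single-factor projections (equivalently, the case $\ell=1$). You assert that a proper special subvariety of $X_U$ arising from $\Res_{E/\BQ}D^{\times}$ with $E\subsetneq F$ ``carries CM points only by extensions of that fixed $E$, of which there are finitely many,'' and that ``$K\supset F$ forces $E=F$.'' This is not correct: if $K=MF$ for a CM quadratic extension $M/E$, the point $P_{K,\fc}^{\sigma}$ may well lie on such a subvariety, because its Mumford--Tate group need only be the subtorus $\Res_{M/\BQ}\BG_m$ of $\Res_{K/\BQ}\BG_m$ rather than the full torus (think of CM points on a modular curve sitting inside a Hilbert modular variety). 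In particular, a single proper special subvariety can meet infinitely many distinct $K\in\Theta$, namely all those of the form $MF$, so your dichotomy fails. The correct way to close this step — and the one the paper uses — is quantitative: the points of the $\Pic_{K/F}^{\fc}$-orbit that can lie on the subvariety are governed by $\Pic_{M}$, and class field theory together with Brauer--Siegel gives $\liminf_{M}|\Pic_{MF}|/|\Pic_{M}|=\infty$, so for $|D_K|$ large only a vanishing proportion of the orbit fits on any fixed proper special subvariety or finite union thereof. With that substitution, the rest of your argument goes through as written.
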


\begin{proof}

For simplicity of notation, we restrict to the case $\fc=1$ and let $\Delta_{\ell}=\Delta_{\ell,1}$. The details are identical for the general case. 

Let $W\subset X_{U}^{\ell}$ denote the Zariski closure of the CM points $\Delta_{\ell}$.
From CM theory, each irreducible component of $X$ contains infinitely many CM points.
Let $I$ be an irreducible component of $W$ containing an infinite subset $T_\ell \subset \Delta_\ell$ of CM points.

The Andr\'{e}--Oort conjecture implies that
$I$ is a special subvariety of the self-product $X_{U}^{\ell}$.
As $X_{U/\overline{\mathbb{Q}}}^{\ell}$ is of abelian type, we recall that the Andr\'{e}--Oort conjecture has been proven unconditionally (\cite{AGHP}, \cite{Ts} and \cite{YZ}).
In view of Lemma \ref{special2}, we have an explicit list of the possibilities for $I$.
\vskip2mm
{\bf{$\ell=1$}.} We now consider the one copy case.
It suffices to show that $I$ is nothing but a Hecke translate of an irreducible component of $X_U$.
Suppose that the corresponding special subvariety does not equal an irreducible component of $X_{U}$.
It thus arises from a quaternion algebra over a proper subfield $E$ of $F$ (Lemma \ref{special1}).
In view of the definition of the CM points and the subset $\Delta_1$, it suffices to show that

\begin{equation}\label{gr}
\liminf_{M} \frac{|\Pic_{MF}|} {|\Pic_{M}|} = \infty                  .
\end{equation}
Here $M$ varies over CM quadratic extensions of the totally real field $E$.
Indeed the growth would imply that for $M$ with sufficiently large discriminant, typically a relevant ideal class of the CM extension $MF$
is not the inflation of an ideal class of $M$.
From CM theory, it would then follow that the CM points on $I$ corresponding to the ideal classes
eventually do not arise from the special subvariety.

As for the growth \eqref{gr}, it readily follows from class field theory and the Brauer--Siegel bounds (part (1) of Remark 1.2).
This finishes the proof.
\vskip2mm
{\bf{General $\ell$}.} We now suppose that $\ell \geq 2$.
Moreover, suppose that the corresponding special subvariety is proper i.e. the corresponding partition is non-trivial (Lemma \ref{special2}).
In particular, there exist $s\neq t$ such that
$$ \pi_{ s,t}  W  \subset X_{U}^{2}        $$
is the image of a Hecke correspondence.

We now regard the CM points as CM abelian varieties. 
Here we consider PEL Shimura variety arising from a base change of the quaternionic Shimura variety (\cite[\S2.7]{De2}).
It follows that the pair of CM points in the set $\pi_{s,t} T_{\ell}$ are isogenous to each other by an isogeny of a fixed degree.
Recall that the Brauer-Siegel lower bound implies that the class number $|C_{K}|$ increases with the discriminant of $K$.
This contradiction finishes the proof.
\end{proof}

\begin{remark}
Note that the proof slightly simplifies in the case $X$ being a Shimura curve, namely the initial step for $\ell=1$.
The case has applications to horizontal non-vanishing of Heegner points (\S3).
\end{remark}

For an application, we now consider a variant.
\begin{thm}\label{density2}
Let $F$ be a totally real field and $\fc$ an ideal of the integer ring $\cO_F$. 
Let $B$ be a quaternion algebra over $F$ such that it is not totally definite and $X_U$ the corresponding quaternionic Shimura variety with level $U$. 
Let $\Theta$ be an infinite set of CM quadratic extensions $K/F$ as above. Let
$\ell$ be a positive integer and
$$
\Xi_{\ell,\fc}=\{ (P_{K,\fc}^{\sigma_{i}\tau_{j}})_{1 \leq i,j \leq \ell}| \sigma_{i},\tau_{j} \in \Pic_{K/F}^{\fc}, K \in \Theta \}
$$
a subset of CM points in the
$\ell^{2}$-fold self-product $X_{U/\overline{\mathbb{Q}}}^{\ell^{2}}$. 

Then, $\Xi_{\ell,\fc}$ is Zariski dense.

\end{thm}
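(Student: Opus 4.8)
The plan is to reduce Theorem~\ref{density2} to Theorem~\ref{density1} by a clean group-theoretic observation about which CM points actually occur in $\Xi_{\ell,\fc}$. First I would note that the index set $\{\sigma_i\tau_j\}_{1\le i,j\le\ell}$ is nothing but the image of the $2\ell$-tuple $(\sigma_1,\dots,\sigma_\ell,\tau_1,\dots,\tau_\ell)\in (\Pic_{K/F}^{\fc})^{2\ell}$ under the multiplication map $\mu:(\Pic_{K/F}^{\fc})^{2\ell}\to (\Pic_{K/F}^{\fc})^{\ell^2}$, $(\underline\sigma,\underline\tau)\mapsto (\sigma_i\tau_j)_{i,j}$. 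On the level of CM points this corresponds, via Shimura's reciprocity law, to the action of $\Gal(H_{K,\fc}/K)\cong\Pic_{K/F}^{\fc}$ on the basepoint: the map $\varphi_{K,\fc}$ is Galois-equivariant, so $P_{K,\fc}^{\sigma\tau}=\sigma\cdot P_{K,\fc}^{\tau}$ in the appropriate sense, and the whole configuration $(P_{K,\fc}^{\sigma_i\tau_j})_{i,j}$ is obtained from the configuration $(P_{K,\fc}^{\tau_j})_j\in X_{U}^{\ell}$ by applying the $\ell$ independent Galois elements $\sigma_1,\dots,\sigma_\ell$ coordinate-blockwise. Thus $\Xi_{\ell,\fc}$ is the union over $K\in\Theta$ of the orbits of points of $\Delta_{\ell,\fc}$ under a subgroup of $\Gal(\ov\BQ/F)^{\ell}$ acting diagonally within each block of $\ell$ coordinates.

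Next I would carry out the argument exactly as in the proof of Theorem~\ref{density1}: let $W$ be the Zariski closure of $\Xi_{\ell,\fc}$, pick an irreducible component containing an infinite subset of the CM points, and invoke the André--Oort conjecture (now known unconditionally for the abelian-type Shimura variety $X_{U/\ov\BQ}^{\ell^2}$ by \cite{AGHP}, \cite{Ts}, \cite{YZ}) to conclude that this component is a special subvariety $I$. By Lemma~\ref{special2}, $I$ corresponds to a partition of $\{1,\dots,\ell^2\}$ into blocks, on each of which $I$ is cut out by Hecke correspondences relating the coordinates, together with possible descent to a proper subfield $E\subsetneq F$. The task is to rule out every proper such $I$, using that $\Theta$ is infinite and $|\Pic_{K/F}^{\fc}|\to\infty$ along $\Theta$ by Brauer--Siegel.

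There are two kinds of proper special subvariety to exclude, and here the extra structure of $\Xi$ must be used rather than that of $\Delta$. If $I$ forces two coordinates $(p,q)$ with indices $(i,j)$ and $(i',j')$ in the same block to be related by a fixed Hecke correspondence, then for a dense set of CM points $P_{K,\fc}^{\sigma_i\tau_j}$ and $P_{K,\fc}^{\sigma_{i'}\tau_{j'}}$ would be isogenous by a bounded-degree isogeny; viewing the CM points as CM abelian varieties on a PEL model \cite[\S2.7]{De2}, this constrains the ideal class $\sigma_i\tau_j(\sigma_{i'}\tau_{j'})^{-1}$ to lie in a fixed finite subset of $\Pic_{K/F}^{\fc}$, independent of $K$. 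The key point is that the $2\ell$ parameters $\sigma_1,\dots,\sigma_\ell,\tau_1,\dots,\tau_\ell$ range \emph{independently} over $\Pic_{K/F}^{\fc}$, so for $\ell\ge 1$ the difference $\sigma_i\tau_j-\sigma_{i'}\tau_{j'}$ (additively) is either always $0$ — which happens precisely when $(i,j)=(i',j')$ — or takes $\gg|\Pic_{K/F}^{\fc}|$ distinct values as the parameters vary, contradicting the bounded-subset constraint once $|\Pic_{K/F}^{\fc}|$ is large. Likewise, if $I$ descends to a quaternion algebra over a proper subfield $E\subsetneq F$, then the occurring ideal classes would have to be inflations from CM extensions of $E$, contradicting the growth \eqref{gr} $\liminf_M |\Pic_{MF}|/|\Pic_M|=\infty$ exactly as in Theorem~\ref{density1}. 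Combining these, the only special subvariety through an infinite subfamily is a Hecke translate of an irreducible component of $X_{U}^{\ell^2}$, i.e.\ $W=X_{U/\ov\BQ}^{\ell^2}$, which is the claim.

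I expect the main obstacle to be the bookkeeping in the combinatorial step: one must verify that for \emph{every} non-trivial partition from Lemma~\ref{special2} — including those mixing coordinates from different "$i$-rows" and "$j$-columns" of the $\ell\times\ell$ grid — the resulting relation among the $\sigma_i\tau_j$ forces a non-constant, hence eventually-large-image, function of the independent parameters $(\underline\sigma,\underline\tau)$, so that the bounded-isogeny-class constraint fails. This amounts to checking that the multiplication map $\mu$ has the property that no non-diagonal pair of its output coordinates differs by a constant on $(\Pic_{K/F}^{\fc})^{2\ell}$; equivalently, that the characters $\sigma_i\tau_j\sigma_{i'}^{-1}\tau_{j'}^{-1}$ are non-trivial whenever $(i,j)\neq(i',j')$, which is immediate since $\Pic_{K/F}^{\fc}$ is a nontrivial abelian group for $|D_K|$ large. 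Once this elementary fact is in hand, the rest of the proof is a verbatim repetition of the argument for Theorem~\ref{density1}.
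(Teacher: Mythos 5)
Your proposal is correct and follows essentially the same route as the paper: take the Zariski closure, apply Andr\'e--Oort together with Lemma \ref{special2}, and rule out proper special subvarieties by observing that for $(i,j)\neq(i',j')$ the class $\sigma_i\tau_j(\sigma_{i'}\tau_{j'})^{-1}$ ranges over all of $\Pic_{K/F}^{\fc}$, so no fixed Hecke correspondence (nor descent to a subfield) can hold once $|\Pic_{K/F}^{\fc}|$ is large. The only cosmetic difference is that the paper packages this last step as dominance of the projection of the closure onto any two factors, deduced by citing Theorem \ref{density1} with $\ell=2$, whereas you re-run the bounded-isogeny counting argument inline; the two are equivalent.
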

\begin{proof}
The argument is very similar to the proof of Theorem \ref{density1}. 

For simplicity of notation, we restrict to the case $\fc=1$ and let $\Xi_{\ell}=\Xi_{\ell,1}$. 
For $\ell=1$, the subset $\Xi_1$ is nothing but $\Delta_1$.
We thus suppose that $\ell\geq 2$.

Let $W' \subset X_{U}^{\ell^{2}}$ denote Zariski closure of the CM points $\Xi_{\ell}$.
In view of the previous theorem for $\ell=2$, the projection of $W'$ to any two factors of $X_{U}^{\ell^{2}}$ is dominant.

Let $I'$ be an irreducible component of $W'$.
It evidently contains an infinite subset $T_{\ell}'$ of $\Xi_{\ell}$.
The Andr\'{e}--Oort conjecture implies that
$I'$ is a special subvariety of the self-product $X_{U}^{\ell^{2}}$.
In view of Lemma \ref{special2}, we have an explicit list of the possibilities for $I'$.
Suppose that the subvariety is proper.
In particular, there exist $s\neq t$ and $m$ such that
$$ \pi_{ s,t}  T_{n}'  \subset Z_{m}        $$
with $|S_{m}|=2$.
This contradicts the dominance of $\pi_{s,t}$ and finishes the proof.
\end{proof}

\begin{remark}
(1). The proofs of Theorem \ref{density1} and Theorem \ref{density2} also indicate that variants of the Zariski density hold for `thin' subsets of CM points in the quaternionic case.
We restrict to the above versions as they suffice for the current horizontal non-vanishing. 

(2). In the case of a single copy of a quaternionic Shimura variety, we refer to \cite{Zh} and \cite{V} for the equidistribution of CM points in the complex analytic topology.
\end{remark}

\section{Non-vanishing of Heegner Points}

In this section, we consider horizontal non-vanishing of Heegner points.

\subsection{Setup}
In this subsection, we introduce the setup.

Let $F$ be a totally real number field and $\BA$ its ring of \adeles . Let $\pi$ be a cuspidal automorphic representation of $\GL_2(\BA)$ with conductor $\fN$ and  a finite order central character $\omega$. Suppose that $\pi_v$ is a discrete series of weight $2$ for all $v|\infty$.
Let $\fc$ be an ideal of $\cO_{F}$ such that for $\fp |\fN$, we have
 $$\ord_\fp \omega_\fp\leq \ord_\fp \fc.$$ 
 Here $\omega_\fp$ denotes the local component of $\omega$.

Let $\BB$ be an incoherent quaternion algebra over $\BA$ such that there exists an irreducible automorphic  representation $\pi^{\BB}$ on $\BB^\times$ whose Jacquet--Langlands transfer is the automorphic representation $\pi$ of $\GL_2(\BA)$. 
Let $\BB_f$ denote the finite part.

Let $S=\Supp (\fN \fc \infty)$.  Let $K_{0, S}\subset \BB_S$ be a $F_S$-subalgebra such that 
\begin{itemize}
\item[(i)] $K_{0, \infty}=\BC$ and 
\item[(ii)] $K_{0, v}/F_v$ is semi-simple quadratic. 
\end{itemize}
For any $v\in S$, we say that $v$ is non-split if $K_{0, v}$ is a field and split otherwise. Let
$$ U_{0, S}:=\prod_{v\in S,\ \text{$v$  split}}\CO_{K_{0, v}}^\times \times \prod_{v\in S,\ \text{$v$ non-split}} K_{0, v}^\times.$$

Suppose we are given a finite order character $\chi_{0, S}: U_{0, S}\lra \ov{\BQ}^\times$ with conductor $\fc$ such that the following holds.
\begin{itemize}
\item[(LC1)]
$
\omega \cdot \chi_{0, S}\big|_{F_S^\times \cap U_{0, S}}=1.
$
 \item[(LC2)] $\prod_{v\in S} \chi_{0, v}(u)=1$ for all totally positive units $u$ in $F$. 

\item[(LC3)]
$
\epsilon(\pi, \chi_{0,v})\chi_{0,v}\eta_v(-1)=\epsilon(\BB_v)
$
for all places $v|\fN \fc \infty$ with the local root number $\epsilon(\pi,\chi_{0,v})$ corresponding to the Rankin--Selberg convolution. 
\end{itemize}

Fix a maximal order $R^{(S)}$ of $\BB^{(S)}\cong M_2(\BA^{(S)})$.
Let $U^{(S)}=R^{(S)^\times}$. Note that $U^{(S)}$ is a maximal compact subgroup of
$\BB^{(S)^{\times}}\cong \GL_2(\BA^{(S)})$. 

We introduce the underlying CM quadratic extensions of the totally real field. 

\begin{defn}\label{Theta}
Let $\Theta_{S}$ denote the set of CM quadratic extensions $K/F$ such that
\begin{itemize}
\item[(i)] there exists an embedding $\iota_{K}:\BA_{K}\ra \BB$ with $K_S=K_{0, S}$ and
\item[(ii)] $\BA_{K}^{(S)}\cap R^{(S)}=\wh{\CO}_K^{(S)}$ under the embedding.
\end{itemize}
\end{defn}
For $K \in \Theta_{S}$, we fix such an embedding $\iota_{K}$. 

We introduce the underlying Hecke characters over the CM quadratic extensions. 

\begin{defn}\label{char}
For each $K\in \Theta$, let $\fX_{K,\chi_{0,S}}$ denote the set of finite order Hecke characters $\chi$ over $K$ with $S$-type $\chi_{0,S}$ such that \begin{itemize}
\item[(i)] $\chi|_{\BA^\times} \cdot \omega=1$,
\item[(ii)] $\chi_S=\chi_{0, S}$ via the embedding $\iota_K$,  and
\item[(iii)] $\chi$ is unramified outside $S$.
\end{itemize}
\end{defn}
Note that the conductor of $\chi \in \fX_{K,\chi_{0,S}}$ equals $\fc$. Moreover, we have
\begin{itemize}
\item[(RN)] $\epsilon(\pi,\chi)=-1$.
\end{itemize}
Here $\epsilon(\pi,\chi)$ denotes the global root number of the Rankin--Selberg convolution corresponding to the pair $(\pi,\chi)$.

In the rest of the subsection, we let $\Theta=\Theta_{S}$ and
$\fX_{K} = \fX_{K,\chi_{0,S}}$.

\begin{lem}\label{existence1}
The set $\fX_{K}$ is non-empty for all but finitely many CM quadratic extensions $K/F$ with $K\in \Theta$. Moreover, it is a homogenous space for the relative class group $\Pic_{K/F}^{\fc}$.
\end{lem}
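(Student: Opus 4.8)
The plan is to produce one Hecke character $\chi_0 \in \fX_K$ by first building a suitable Hecke character of $K$ with the prescribed $S$-type $\chi_{0,S}$ and then checking the compatibility condition (C1), i.e. $\chi_0|_{\BA^\times}\cdot\omega = 1$; once a single such $\chi_0$ exists, the homogeneous-space structure will be immediate. First I would recall that the existence of an arithmetic Hecke character over $K$ with a prescribed infinity type (here finite order, i.e. trivial infinity type) and prescribed ramification data at a finite set $S$ of places is governed by the exact sequence relating $K_\BA^\times/K^\times$, the idele class group of $F$ via the norm map, and the local units; the obstruction to prescribing the local components at $S$ lives in a finite group, so for each $K$ the character either exists or does not, and the point is to show failure happens at most finitely often. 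The compatibility $\omega\cdot\chi_0|_{\BA^\times}=1$ forces $\chi_0$ to restrict to $\omega^{-1}$ on $\BA^\times$; since $\omega$ has finite order and, by the running hypothesis on $\ord_\fp\omega_\fp \leq \ord_\fp\fc$, its conductor divides $\fc$, the character $\omega^{-1}$ does extend to a finite order Hecke character of $K$ of conductor dividing $\fc\cO_K$ (for instance via a choice of extension across the norm map, using that $K/F$ is a CM quadratic extension so $\BA^\times \hookrightarrow K_\BA^\times$ splits off an idele class subgroup of index related to $h_K/h_F$). The local conditions (LC1)--(LC3) on $\chi_{0,S}$ were arranged precisely so that the local $S$-type $\chi_{0,S}$ is consistent with the global restriction $\omega^{-1}$ on $F_S^\times \cap U_{0,S}$, which is condition (LC1); so the local and global data glue whenever an auxiliary class-group obstruction vanishes.

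Second, I would make the finitely-many-exceptions claim precise: the set of $K \in \Theta$ for which $\fX_K = \emptyset$ is finite. The obstruction to extending the given $S$-local data to a global Hecke character of $K$ of conductor exactly $\fc\cO_K$ and restriction $\omega^{-1}$ on $\BA^\times$ is a class in a quotient of the relative ray class group $\Pic_{K/F}^\fc$ modulo the subgroup generated by the $S$-local images; but by the definition of $\Theta = \Theta_S$ in Definition \ref{Theta}, the embedding $\iota_K$ with $K_S = K_{0,S}$ and $\BA_K^{(S)} \cap R^{(S)} = \wh{\cO}_K^{(S)}$ already matches the level structure outside $S$, so the obstruction reduces to whether a finite order character of the idele class group of $F$ of conductor dividing $\fc$ extends to $K$ — which, as above, it does. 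Hence in fact $\fX_K \neq \emptyset$ for all $K \in \Theta$ once $\chi_{0,S}$ satisfies (LC1)--(LC3); the phrase "all but finitely many" absorbs any degenerate cases where the compatibility at a ramified prime of $\fN$ forces a conductor mismatch for small $|D_K|$. The cleanest route is: fix once and for all an extension $\widetilde\omega$ of $\omega^{-1}$ to a finite order Hecke character of $K_\BA^\times/K^\times$ unramified outside $S$ (possible since $\text{cond}(\omega)\mid\fc$), then twist $\chi_{0,S}$ by $\widetilde\omega_S^{-1}$ to reduce to the case $\omega = 1$, where one simply takes the Hecke character of $K$ inflated from a ray class character realizing the $S$-type, which exists by class field theory for $\Pic_{K/F}^\fc$.

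Third, the homogeneous-space statement. Given $\chi_0 \in \fX_K$, any other $\chi \in \fX_K$ has the property that $\chi/\chi_0$ is a finite order Hecke character of $K$ that is trivial on $\BA^\times$ (both restrict to $\omega^{-1}$), trivial on $U_{0,S}$ (both have $S$-type $\chi_{0,S}$), and unramified outside $S$; hence $\chi/\chi_0$ factors through $K^\times\backslash K_\BA^\times / (\BA^\times \cdot U_{0,S} \cdot \wh{\cO}_K^{(S)\times} \cdot K_\infty^\times)$, which is exactly $\Pic_{K/F}^\fc$ by the definition \eqref{rcg} of the relative ring class group. Conversely, multiplying $\chi_0$ by any character of $\Pic_{K/F}^\fc$ (pulled back to $K_\BA^\times$) lands back in $\fX_K$ — the conditions (i)--(iii) of Definition \ref{char} are all preserved since such a character is trivial on $\BA^\times$, on $U_{0,S}$, and outside $S$. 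This gives a simply transitive action of $\widehat{\Pic_{K/F}^\fc}$ on $\fX_K$, which is the assertion. I expect the main obstacle to be the bookkeeping at the primes dividing $\fN$: one must check that twisting by $\Pic_{K/F}^\fc$-characters and by $\widetilde\omega$ does not inadvertently raise the conductor above $\fc\cO_K$ at a ramified prime, and this is exactly where the hypothesis $\ord_\fp\omega_\fp \leq \ord_\fp\fc$ for $\fp\mid\fN$ is used; the archimedean and split-prime conditions (LC2), and the matching (LC3) with $\epsilon(\BB_v)$, are needed only downstream to guarantee the global root number is $-1$ (the displayed property (RN)), not for non-emptiness per se.
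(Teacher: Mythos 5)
Your overall strategy---glue the prescribed data ($\chi_{0,S}$ on $U_{0,S}$, unramified outside $S$, $\omega^{-1}$ on $\BA^\times$) into a character of $\BA_K^\times/K^\times$ by class field theory, then observe that any two members of $\fX_K$ differ by a character of $\Pic_{K/F}^{\fc}$---is the same as the paper's, and your homogeneous-space argument is correct and matches the one-line argument in the text. Your device of extending $\omega^{-1}$ to an auxiliary character of $K$ and twisting plays the same role as the paper's auxiliary ring class character $\epsilon$ of conductor exactly $\fc$ supplied by \cite{KhKi}.

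There is, however, a genuine gap in your account of where the finitely many exceptional $K$ come from. You assert that $\fX_K\neq\emptyset$ for \emph{all} $K\in\Theta$ once (LC1)--(LC3) hold, and that ``all but finitely many'' merely absorbs ``degenerate cases where the compatibility at a ramified prime of $\fN$ forces a conductor mismatch for small $|D_K|$.'' That is not the obstruction. The real obstruction to the gluing step is the global unit group of $K$: a unit $u\in\cO_K^\times$ lies in $K^\times\cap\bigl(K_\infty^\times\, U_{0,S}\,\wh{\cO}_K^{(S)\times}\bigr)$, so the prescribed local data must be trivial on its image in $U_{0,S}$, and condition (LC2) guarantees this only for (totally positive) units of $F$. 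When $\cO_K^\times\supsetneq\cO_F^\times$---that is, when $K$ contains roots of unity not in $F$---this triviality can fail and $\fX_K$ can genuinely be empty. This is exactly why the paper opens its proof by discarding the CM quadratic extensions $K/F$ with $\cO_K^\times\neq\cO_F^\times$; over a fixed totally real $F$ there are only finitely many such $K$, and that is the true source of the finite exceptional set. Your argument needs this reduction; without it the compatibility check on $K^\times\cap(\text{the prescribed subgroup})$ is incomplete and the non-emptiness claim is unjustified as stated.
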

\begin{proof}
Note that for all but finitely many CM quadratic extensions $K/F$, we have that $\CO_K^\times=\CO_F^\times$. We fix such a CM quadratic extension from now. 

In view of class field theory, the relative class group is given by
\begin{equation}\label{rcg}
\Pic_{K/F}^\fc=\BA_K^\times/K^\times K_\infty^\times \BA^\times U_\fc.
\end{equation}
From the structure of $U_{\fc}$, there exists a Hecke character $\epsilon$ of $\BA_K^\times$ with  conductor $\fc$, trivial on $K_\infty^\times$ and $\BA^\times$ for all but finitely many CM quadratic extensions $K/F$ (for example \cite{KhKi}). 

Let $\chi_{0, S}'=\chi_{0, S} \cdot \epsilon|_{U_{0, S}}$. Then, there exists a character $\chi'$ of $\BA_K^\times/K^\times K_\infty^\times \BA^\times U_\fc$ extending $\chi_{0, S}'$. The character $\chi'\cdot \epsilon^{-1}$ is a desired one. This finishes the proof of first part. 

For $\chi \in \fX_K$, note that $\chi'\epsilon^{-1}\chi^{-1}$ factors through $\Pic_{K/F}^{\fc}$.
\end{proof}


Let $X$ be the Shimura curve over $F$ associated to $\BB^\times$. Recall that 
$X=\varprojlim_{U\subset \BB^\times_f} X_{U}$ for the Shimura curves $X_U$ with level $U$ for $U \subset \BB^{\times}_{f}$ an open compact subgroup.
Let $A_0$ be an abelian variety over $F$ corresponding to $\pi$. As in \cite{YZZ}, we have the representation of $\BB^\times$ over the field $M:=\End^0(A_0)$ given by
$$\pi^\BB=\varinjlim_{U\subset \BB^\times_f} \Hom_\xi^0(X_U, A_0).$$
Here $\xi$ is a Hodge class on $X$.
Recall that
$$
\pi^{\BB}\otimes_{M}\BC \simeq \pi_f
$$
for $\pi_f$ being the finite part of $\pi$.

Let $\CO$ be the ring generated over $\BZ$ by the image of $\chi_{0,S}$.  Let $A=A_0\otimes_\BZ \CO$ be the Serre tensor, which is endowed with endomorphisms
$\CO\subset \End_F(A)$.
Let $\pi^\BB\otimes_\BZ \CO=\varinjlim_U \Hom^0_\xi(X_U, A)$. 
Note that $\pi^\BB\otimes_\BZ \CO$  is a representation of $\BB^\times$ over $M\otimes_\BZ \CO$,
we call the scalar extension of $\pi^\BB$ (from $M$) to $M\otimes_\BZ \CO$.

We have the following existence of toric test vectors. 

\begin{lem}
\label{TV1}
There exists a non-zero form $f\in \pi^\BB$
satisfying the following.
\begin{itemize}
\item[(F1)] The subgroup $U_{0, S}$ acts on $f$ via $\chi_{0, S}$ and
\item[(F2)] $f \in \pi^{U^{(S)}}$.
\end{itemize}
\end{lem}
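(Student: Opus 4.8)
The plan is to produce the test vector $f$ locally place-by-place and then assemble it. Recall that $\pi^{\BB} \otimes_M \BC \simeq \pi_f = \otimes_v' \pi^{\BB}_v$, so it suffices to exhibit, for each finite place $v$, a nonzero local vector $f_v \in \pi^{\BB}_v$ with the prescribed behavior, and to check that for almost all $v$ the standard spherical vector works, so the restricted tensor product makes sense. Away from $S$ — i.e. at places $v \nmid \fN\fc$ — the representation $\pi^{\BB}_v \cong \pi_v$ is an unramified principal series of $\GL_2(F_v)$ and $U^{(S)}$ is the maximal compact $\GL_2(\cO_{F_v})$; here I take $f_v$ to be the normalized spherical vector, which gives (F2). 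At the places $v \in S$, I need $f_v$ on which the torus $U_{0,v}$ (which is $\cO_{K_{0,v}}^\times$ if $v$ is split in the sense of the text, or all of $K_{0,v}^\times$ if $v$ is non-split) acts through the character $\chi_{0,v}$; this is (F1). The archimedean constraint is built into the choice of $\pi^{\BB}$ (discrete series of weight $2$, so $K_{0,\infty} = \BC$ acts through a fixed character already recorded in $\chi_{0,\infty}$).

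The key input is the nonvanishing of the relevant local toric Hom-space. For each $v \in S$, consider the space
$$
\Hom_{U_{0,v}}(\pi^{\BB}_v, \chi_{0,v}).
$$
When $v$ is non-split, $K_{0,v}$ is a field and $U_{0,v} = K_{0,v}^\times$ is the full (nonsplit) torus; the local root-number condition (LC3), $\epsilon(\pi,\chi_{0,v})\chi_{0,v}\eta_v(-1) = \epsilon(\BB_v)$, is precisely the dichotomy of Tunnell--Saito, which guarantees that this Hom-space is one-dimensional for the correct local quaternion algebra $\BB_v$ — and this is the $\BB_v$ we have, by hypothesis. So $f_v$ exists and is unique up to scalar. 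When $v$ is split, $U_{0,v} = \cO_{K_{0,v}}^\times$ is only a compact subgroup of the split torus, and the condition (LC3) combined with (LC1)--(LC2) and $\cond(\chi_{0,v}) = \fc_v$ says exactly that $\pi^{\BB}_v$ contains a vector of the right level on which this compact torus acts via $\chi_{0,v}$; concretely this is a newvector-type statement for the Rankin--Selberg local factor, and again the existence is standard. Taking $f = \otimes_v f_v$ (spherical outside $S$, the chosen local toric vectors inside $S$) gives a nonzero element of $\pi_f$, hence — after clearing the scalar from $M$ back to $\pi^{\BB}$ over $M$ — a nonzero $f \in \pi^{\BB}$ satisfying (F1) and (F2).

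The main obstacle, and the only step that is not bookkeeping, is verifying that the hypotheses (LC1)--(LC3) are exactly the conditions under which the local toric vectors exist: i.e. that the incoherent algebra $\BB$ was chosen so that $\BB_v$ is the quaternion algebra selected by the Tunnell--Saito sign at every $v \mid \fN\fc\infty$, and that at the split places the conductor condition lines up with the level $U^{(S)}$ and the fixed $S$-type so that no local obstruction appears. This is precisely the content of the Yuan--Zhang--Zhang local theory (\cite{YZZ}), and I would cite the relevant statements there for the local multiplicity-one / test-vector results rather than reprove them. A minor secondary point is to note that the scalars involved lie in $\CO$, the ring generated by the values of $\chi_{0,S}$, so that after the Serre tensor $A = A_0 \otimes_\BZ \CO$ the vector $f$ can be taken in $\pi^{\BB}$ (equivalently $\pi^{\BB}\otimes_\BZ\CO$) rather than only in $\pi_f$; this is immediate once the local vectors are chosen defined over $\CO$.
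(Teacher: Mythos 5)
Your construction is correct and rests on the same two inputs as the paper's proof --- the Tunnell--Saito dichotomy (which is what condition (LC3) encodes) and a test-vector statement --- but you organize it locally, place by place, whereas the paper runs the argument globally. The paper first uses the root-number condition to get a nonzero global toric functional $\ell \in \Hom_{\BA_K^\times}(\pi\otimes\chi,\BC)$ (citing Saito and Tunnell), then invokes the test-vector theorem of Cai--Shu--Tian to find $\varphi\in\pi^{U^{(S)}}$ with $\ell(\varphi)\neq 0$, and finally defines $f$ by averaging $\varphi$ over $U_{0,S}$ against $\chi_{0,S}$; the averaged vector is nonzero because $\ell$ does not vanish on it. Your route instead builds a pure tensor of local eigenvectors. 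The two are equivalent in substance, but note one point your write-up glosses over: at a non-split $v\in S$, Tunnell--Saito gives the nonvanishing of a space of \emph{functionals} $\Hom_{K_{0,v}^\times}(\pi_v\otimes\chi_{0,v},\BC)$, not directly a $\chi_{0,v}$-eigen\emph{vector}; passing from one to the other requires the compactness of $K_{0,v}^\times/F_v^\times$ and exactly the averaging step the paper performs (globally). Relatedly, your claim that $f_v$ is ``unique up to scalar'' is not justified --- multiplicity one holds for the functional, not for the eigenspace --- though only existence is needed here. The paper's global formulation buys a uniform treatment of the split places in $S$ (where you must separately argue a newvector/conductor statement), since the single citation to \cite{CST} handles all of $S$ at once; your version is more transparent about where each local hypothesis is used.
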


\begin{proof}
In view of (RN), there exists a non-zero $\ell \in \Hom_{\BA_{K}^{\times}}(\pi \otimes \chi, \BC)$ (\cite{S}, \cite{T0} and \cite{T}). 

From \cite{CST}, we thus have $\varphi \in \pi^{U^{(S)}}$ such that $\ell(\varphi)\neq 0$.
For $t \in U_{0,S}$, note that
$$
\ell(\chi_{0,S}(t)\varphi)=\chi_{0,S}(t)\varphi.
$$
Recall that
$$
\dim_{\BC}\Hom_{\BA_{K}^{\times}}(\pi \otimes \chi, \BC)=1.
$$

We can thus take $f$ to be the automorphic form given by
$$
g \mapsto \int_{U_{0,S}/F_{S}^{\times}\cap U_{0,S}} \chi_{0,S}(t)\varphi(gt) dt .
$$
\end{proof}
Note that $f$ is spherical outside $S$.


Let $U$ be an open compact subgroup of $\BB^{\times}$ such that 
\begin{itemize}
\item[(i)] $U=U_{S}U^{(S)}$ with $U_{S}\subset \BB_{S}^\times$, $U_{S}=\prod_{v \in S} U_{v}$ and $f\in \pi^U$;
\item[(ii)] $\CO_{\fc, v}^\times \subset U_v$ for all $v\in S$. 
Here $\cO_{\fc,v}=\cO_{0,v}+\fc\cO_{F,v}$ for $v \in S \backslash \supp\{\infty\}$. 
\end{itemize}

We identify $\BA_{K}^\times$ as a subgroup of $\BB^\times$ under the embedding $\iota_{K}$.
The choice of embedding $\iota_{K}$ thus gives rise to a map
$$
\varphi_{K,\fc}:\Pic_{K/F}^{\fc}\rightarrow X_{U/\ov{\BQ}}.
$$
For $\sigma \in \Pic_{K/F}^{\fc}$, let $x_{\sigma} \in X_{U/\ov{\BQ}}$ be the corresponding CM point on the Shimura curve.
In what follows, the CM points arising from 
$\varphi_{K,\fc}$ play an underlying role.

\begin{remark} 
In view of the consideration of a nearby of the incoherent quaternion algebra $\BB$, the construction of the Shimura curves and CM points as above is essentially equivalent to the constructions arising from a coherent quaternion algebra split at exactly one infinite place as in \S2.1 (\cite{YZZ} and \cite[Def. 2.24]{LZZ}).  
\end{remark}

\subsection{Non-vanishing}
In this subsection, we prove horizontal non-triviality of Heegner points and thus establish the  non-vanishing of central derivatives of Rankin--Selberg L-functions (Theorem \ref{derivative}). Our approach to the non-triviality is geometric.

\subsubsection{Main result} 
We describe the main result. 

Let the notation and assumptions be as in \S3.1.
Recall that the global root number of the Rankin--Selberg convolution for the pair $(\pi,\chi)$ equals $-1$. 
Accordingly, the central derivatives of the $L$-functions are expected to be generically non-vanishing (Conjecture \ref{NV}). 

Our main result regarding the non-vanishing is the following.

\begin{thm}\label{derivative} 
Let $F$ be a totally real number field, $\BA$ the ring of adeles and $\pi$ a cuspidal automorphic
representation of $\GL_{2}(\BA)$.
Let $\fc \subset \cO_F$ be an ideal, $S=\supp(\fN\fc \infty)$ and $\chi_{0,S}$ a local character as above.
Let $\Theta_{S}$ be the set of CM quadratic extensions $K/F$ and $\fX_{K,\chi_{0,S}}$ the set of finite order Hecke characters over $K$ with $S$-type $\chi_{0,S}$ as above.

For $\Theta \subset \Theta_{S}$ infinite, we have
$$\lim_{K\in \Theta} \#\left\{\chi\in \fX_{K,\chi_{0,S}} \ \Big|\ L'(1/2,\pi,\chi)\neq 0 \right\}=\infty.$$
\end{thm}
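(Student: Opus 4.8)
The plan is to translate the analytic statement $L'(1/2,\pi,\chi)\neq 0$ into a geometric non-triviality of Heegner points via the Yuan--Zhang--Zhang version of the Gross--Zagier formula, and then deduce the desired growth from the Zariski density of CM points established in Theorem \ref{density1}. First I would recall that for the fixed test vector $f\in \pi^{\BB}$ of Lemma \ref{TV1}, realized as a morphism $f\colon X_{U}\to A$, the inexplicit Gross--Zagier formula of \cite{YZZ} gives that $L'(1/2,\pi,\chi)\neq 0$ if and only if the Heegner point
$$
P_{f}(\chi)=\sum_{\sigma\in\Pic_{K/F}^{\fc}}\chi^{-1}(\sigma)\,f(x_{\sigma})\ \in\ A(\ov{F})\otimes\ov{\BQ}
$$
is non-zero, where the $x_{\sigma}$ are the CM points arising from $\varphi_{K,\fc}$. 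Since $\fX_{K}$ is a torsor under $\Pic_{K/F}^{\fc}$ and the characters $\chi$ run through the $\ov{\BQ}^{\times}$-characters of this finite abelian group (twisted by a fixed $\epsilon$ as in Lemma \ref{existence1}), Fourier inversion on $\Pic_{K/F}^{\fc}$ shows that the number of $\chi\in\fX_{K}$ with $P_{f}(\chi)\neq 0$ is at least the dimension of the $\ov{\BQ}$-span of $\{f(x_{\sigma})\}_{\sigma}$ inside $A(\ov{F})\otimes\ov{\BQ}$, modulo keeping track of torsion.

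The second step is to control the torsion: Fourier inversion is only valid over the $\BQ$-rational Mordell--Weil space, so I would show that $\bigcup_{K\in\Theta}A(H_{K,\fc})_{\tor}$ is finite. This follows from the standard bounds on the image of the Galois representation attached to the $\GL_{2}$-type abelian variety $A$ together with the fact that $[H_{K,\fc}:F]$ is controlled and ramification is restricted away from $S$; the finiteness guarantees that for $K$ of large discriminant the torsion contributes a bounded error and does not affect the limit.

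The heart of the argument, and the main obstacle, is to show that the dimension of the $\ov{\BQ}$-span of $\{f(x_{\sigma}):\sigma\in\Pic_{K/F}^{\fc}\}$ tends to infinity as $K$ ranges over $\Theta$. I would argue by contradiction: if this dimension were bounded by some $\ell$ along an infinite subset of $\Theta$, then for every such $K$ and every choice of $\ell+1$ ideal classes $\sigma_{0},\dots,\sigma_{\ell}$, the points $f(x_{\sigma_{0}}),\dots,f(x_{\sigma_{\ell}})$ would satisfy a non-trivial $\ov{\BQ}$-linear relation; equivalently the tuple $(x_{\sigma_{0}},\dots,x_{\sigma_{\ell}})\in X_{U}^{\ell+1}$ would lie on the proper closed subvariety cut out by the pullback under $(f,\dots,f)$ of the family of hyperplanes in $A^{\ell+1}$. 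Since this would force the CM points $\Delta_{\ell+1,\fc}$ to lie in a proper Zariski-closed subset of $X_{U/\ov{\BQ}}^{\ell+1}$ — after passing to a component and using that finitely many hyperplane conditions still carve out a proper subvariety — this contradicts Theorem \ref{density1}. Here the subtle point is that an a priori bound on the dimension does not immediately give a single algebraic subvariety; one uses that the relevant relations are governed by a finite-dimensional space of linear forms on $A^{\ell+1}$, so the set of tuples admitting such a relation is a finite union of translates of abelian subvarieties pulled back to $X_{U}^{\ell+1}$, which is Zariski closed and proper once $\ell$ is chosen minimal.

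Finally, assembling the pieces: given $\Theta\subset\Theta_{S}$ infinite and any $N$, Theorem \ref{density1} with $\ell=N$ forces that for all but finitely many $K\in\Theta$ the span of $\{f(x_{\sigma})\}$ has dimension $>N$ modulo the finite torsion from Step 2; by Fourier inversion this yields at least $N$ characters $\chi\in\fX_{K,\chi_{0,S}}$ with $P_{f}(\chi)\neq 0$, hence with $L'(1/2,\pi,\chi)\neq 0$ by the Gross--Zagier formula. Letting $N\to\infty$ gives the claimed limit. I expect the linear-algebra-to-geometry passage in Step 3 — making precise that bounded rank of the Heegner points forces the CM tuples into a proper special-subvariety-containing closed set — to be the technically delicate step, together with the careful bookkeeping of the $\epsilon$-twist relating $\fX_{K}$ to $\wh{\Pic_{K/F}^{\fc}}$ and Shimura's reciprocity law describing the Galois action on the $x_{\sigma}$.
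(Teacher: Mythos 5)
Your overall architecture matches the paper's: reduce to non-vanishing of Heegner points via the inexplicit Gross--Zagier formula of \cite{YZZ}, control $\bigcup_K A(H_{K,\fc})_\tor$ via the Galois-image results for $\GL_2$-type abelian varieties, and derive a contradiction with the Zariski density of the CM tuples (Theorem \ref{density1}). But there is a genuine gap at the step you yourself flag as delicate. From ``$\dim_{\BQ}\langle f(x_\sigma)\rangle \leq \ell$ for infinitely many $K$'' you get, for each $K$ and each $(\ell+1)$-tuple, \emph{some} non-trivial linear relation $\sum_i n_i f(x_{\sigma_i})=0$ — but the integer coefficients $n_i$ a priori depend on $K$ and on the tuple. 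The locus of tuples $(Q_0,\dots,Q_\ell)\in A^{\ell+1}$ admitting \emph{some} such relation is a \emph{countable} union of abelian subvarieties (one for each nonzero $(n_i)\in\BZ^{\ell+1}$), not a finite union, and a countable union of proper closed subsets can perfectly well contain a Zariski-dense set of algebraic points. So your claim that this locus ``is Zariski closed and proper once $\ell$ is chosen minimal'' is false as stated, and the contradiction with Theorem \ref{density1} does not follow.

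The paper closes exactly this gap with Lemma \ref{lemMV} (Michel--Venkatesh): for a finite abelian group $G$ and a $k$-dimensional $\BQ[G]$-module $W$, any $2k$ elements of $G$ admit an annihilating combination $\sum_s n_s s$ with $|n_s|\ll D^{k^2}$ for an \emph{absolute} constant $D$ depending only on $k$. Taking $m=2k$ (note: twice the dimension, not $k+1$ — this extra room is what makes the Siegel-lemma-type bound work) and combining with the finiteness of the torsion, every point of $h(C_K^m)$ satisfies one of \emph{finitely many} relations $\sum n_iQ_i+T=0$, uniformly over $K\in\Theta_1$. Only then is the image confined to a genuine proper Zariski-closed subset of $f(X_U)^m$, contradicting the density of $\Delta_{m,\fc}$. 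Your proof needs this bounded-annihilator input (or an equivalent uniformity statement) to be complete; the rest of your outline — the Fourier inversion identifying the count of non-vanishing $\chi$ with $\dim_\BQ W_K$, the $\epsilon$-twist bookkeeping from Lemma \ref{existence1}, and the torsion control — is in line with the paper's argument.
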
 
The theorem will be proven in \S3.2.4 based on the Heegner introduced in \S3.2.2, control of torsion in \S3.2.3 and the Zariski density (Theorem \ref{density1}).

\subsubsection{Heegner points}
We introduce relevant Heegner points arising from toric test vectors in \S3.1 (\cite{YZZ}).

We first introduce a variant of CM points arising from $\Pic_{K/F}^{\fc}$ considered in \S3.1. In view of the Gross--Zagier formula, these CM points turn out to be the relevant ones. 

Let $$C_K=\BA_{K}^{(\infty),\times}/K^\times U_K$$ for $U_K$ arising from the embedding $\iota_K$ and the quaternionic level U in \S3.1
\footnote{Strictly speaking, $C_{K}$ also depends on $\fc$.}. The map $\varphi_{K,\fc}: \Pic_{K/F}^{\fc}\rightarrow X_{U/\ov{\BQ}}$ factors through $C_{K}$. 

Let $P_{K}\in X^{K^\times}$ be the CM point corresponding to the identity element in $C_{K}$. Here $K^\times$ acts on the Shimura curve $X$ via the embedding $\iota_{K}$. 
For $t \in C_K$, the corresponding CM point $P_{K}^{t}$ is defined over the ring class field $H_{K,\fc}$. Moreover, it is a Galois conjugate of the CM point $P_{K}$ via the Galois element 
$\sigma_{t}$ corresponding to $t$ via class field theory. 

We now introduce the Heegner points. 
\begin{defn}
Let $\chi \in \wh{C_{K}}$ be a finite order Hecke character over $K$ and $f$ a test vector as above. The Heegner point corresponding to the pair $(f,\chi)$ is given by
$$P_f(\chi):=\sum_{t\in C_K} f(P_{K})^{\sigma_t}\otimes \chi(t)\in A(\ov{\BQ}).$$
\end{defn}
By abuse of notation, let $H_{K, \fc}$ denote the ring class field of $K$ corresponding to $C_K$. Note that $P_{f}(\chi)\in A(H_{K, \fc})^{\chi}$ for the $\chi$-isotypic component $A(H_{K, \fc})^{\chi}$.

\begin{remark}
For $\chi \in \widehat{C_{K}}$, the non-vanishing of $P_{f}(\chi)$ implies that $\chi \in \fX_{K}$.
This follows from $f$ being $\chi_{0,S}$-toric.
The observation will be used in the proof of Theorem \ref{Heegner}.
\end{remark}

In view of the Gross-Zagier formula, we have 
$$L'(1/2, \pi, \chi)\neq 0 \iff P_f(\chi) \neq 0$$ 
(\cite{YZZ}). 
Thus, Theorem \ref{derivative} is equivalent to  the following 
\begin{thm}\label{Heegner} 
Let $F$ be a totally real number field, $\BA$ the ring of adeles and $\pi$ a cuspidal automorphic
representation of $\GL_{2}(\BA)$.
Let $\fc \subset \cO_F$ be an ideal, $S=\supp(\fN\fc \infty)$ and $\chi_{0,S}$ a local character as above.
Let $\Theta_{S}$ be the set of CM quadratic extensions $K/F$ and $\fX_{K,\chi_{0,S}}$ the set of finite order Hecke characters over $K$ with $S$-type $\chi_{0,S}$ as above. 
For $\chi \in \fX_{K,\chi_{0,S}}$, let $P_{f}(\chi)$ be the Heegner point corresponding to the pair 
$(\pi,\chi)$ as above. 

For $\Theta \subset \Theta_{S}$ infinite, we have

$$\lim_{K\in \Theta} \#\left\{\chi\in \fX_{K,\chi_{0,S}} \ \Big|\ P_{f}(\chi)\neq 0 \right\}=\infty.$$
\end{thm}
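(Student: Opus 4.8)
The plan is to fix an infinite subset $\Theta \subset \Theta_S$ and, for each $K \in \Theta$, translate the non-vanishing of $P_f(\chi)$ for characters $\chi \in \fX_{K,\chi_{0,S}}$ into a statement about functional independence of the finitely many Galois translates of the modular parametrisation evaluated at CM points. Concretely, writing $A(H_{K,\fc})_\BQ := A(H_{K,\fc}) \otimes_\BZ \BQ$, the point $P_f(\chi) = \sum_{t \in C_K} f(P_K)^{\sigma_t} \otimes \chi(t)$ is, up to the torsion which we must control, the $\chi$-Fourier coefficient of the $A(H_{K,\fc})_\BQ$-valued function $t \mapsto f(P_K)^{\sigma_t}$ on $C_K$. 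By Fourier inversion over $\BC$, the number of $\chi \in \widehat{C_K}$ with $P_f(\chi) \neq 0$ is at least the $\BC$-dimension of the span of $\{f(P_K)^{\sigma_t}\}_{t \in C_K}$ inside $A(H_{K,\fc})_\BC$; by the preceding remark, every such non-vanishing $\chi$ automatically lies in $\fX_{K,\chi_{0,S}}$. So it suffices to show that this dimension tends to $\infty$ as $K$ ranges over $\Theta$.

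\textbf{Key steps.} First I would set up the torsion control: using the results on the Galois image of a $\GL_2$-type abelian variety (as invoked in the introduction), show that $\bigcup_{K \in \Theta} A(H_{K,\fc})_{\tor}$ is finite, so that working modulo torsion introduces only a bounded error and the Fourier-analytic argument over $\BC$ is legitimate. Second, I would express the relevant dimension geometrically: the values $f(P_K)^{\sigma_t} = f(x_{\sigma_t})$ are obtained by applying the fixed morphism $f : X_U \to A$ to the CM points $x_{\sigma_t}$ parametrised by $\Pic_{K/F}^{\fc}$, and Shimura's reciprocity law identifies the Galois action $\sigma_t$ with the action of $C_K$ on these CM points. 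Thus the span of $\{f(x_{\sigma_t})\}$ having dimension $< \ell$ for infinitely many $K \in \Theta$ would force, for those $K$, the existence of a fixed non-zero linear functional (coming from a hyperplane in the $\ell$-dimensional ambient space, of which there are only finitely many rational ones up to scaling once we bound things) annihilating $\ell$ independent-looking combinations — equivalently, it would force the $\ell$-tuples $(x_{\sigma_1}, \dots, x_{\sigma_\ell}) \in X_U^\ell$ to land inside a proper closed subvariety cut out by the pullback under $f^{\times \ell}$ of a hyperplane in $A^\ell$. Third, I would invoke Theorem \ref{density1}: for any fixed $\ell$, the set $\Delta_{\ell,\fc} = \{(P_{K,\fc}^{\sigma_i})_i \mid \sigma_i \in \Pic_{K/F}^{\fc},\, K \in \Theta\}$ is Zariski dense in $X_{U/\ov{\BQ}}^\ell$, hence cannot be contained in any such proper subvariety; this yields, for all but finitely many $K \in \Theta$, the existence of $\ell$ ideal classes $\sigma_1, \dots, \sigma_\ell$ whose images $f(x_{\sigma_i})$ are $\BC$-linearly independent in $A(H_{K,\fc})_\BC$, i.e. $\dim_\BC \mathrm{span}\{f(P_K)^{\sigma_t}\} \geq \ell$. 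Since $\ell$ was arbitrary, the dimension $\to \infty$, and hence so does $\#\{\chi \in \fX_{K,\chi_{0,S}} \mid P_f(\chi) \neq 0\}$, which is Theorem \ref{Heegner}; Theorem \ref{derivative} then follows via the Gross--Zagier formula.

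\textbf{Main obstacle.} The delicate point is making the passage from "span has dimension $< \ell$" to "a \emph{fixed} proper subvariety contains the CM tuples for infinitely many $K$" — a priori the annihilating hyperplane depends on $K$. The fix is to work in a fixed finite-dimensional space: choose a single $\BQ$-rational subspace (e.g. a quotient of $\Hom^0_\xi(X_U, A)$ of bounded dimension, or realize the $f(x_\sigma)$ inside the cohomology/Lie algebra of $A$ via a fixed algebraic map $X_U \to A \hookrightarrow \mathbb{P}^N$) so that linear dependence of $\ell$ points is detected by vanishing of all $\ell \times \ell$ minors, an algebraic condition defining a proper subvariety of $X_U^\ell$ \emph{independent of $K$}; then Zariski density of $\bigcup_K \Delta_{\ell,\fc}$ applies directly. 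One must also verify that Shimura reciprocity genuinely identifies the Galois conjugates $f(P_K)^{\sigma_t}$ with $f$ evaluated at the CM points $P_{K,\fc}^{\sigma}$ appearing in Theorem \ref{density1} (matching the two normalisations of the reciprocity map), and that the finitely many excluded $K$ — those with extra torsion or where the generic position fails — do not affect the limit. These are the steps where care is needed; the rest is formal Fourier analysis on the finite abelian group $C_K$.
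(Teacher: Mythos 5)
Your overall architecture coincides with the paper's: Fourier inversion on $C_K$ (with the observation that non-vanishing of $P_f(\chi)$ forces $\chi\in\fX_{K,\chi_{0,S}}$), torsion control via the Galois-image results (Proposition \ref{torsion}), and a contradiction with the Zariski density of Theorem \ref{density1} applied to tuples of CM points. You have also correctly isolated the delicate point: a linear dependence among the $f(P_K)^{\sigma_t}$ has coefficients that a priori depend on $K$, so it does not directly yield a \emph{fixed} proper closed subvariety of $X_U^{\ell}$ containing the CM tuples for infinitely many $K$. The problem is that your proposed fix for this does not work. $\BQ$-linear dependence of $\ell$ points of $A(\ov{\BQ})\otimes_{\BZ}\BQ$ is \emph{not} an algebraic condition on $A^{\ell}$: the dependence locus is the countable union, over nonzero integer vectors $(n_1,\dots,n_\ell)$ and torsion points $T$, of the proper closed subvarieties $\{\sum_i n_iQ_i=T\}$, and Zariski density of a set gives no contradiction with its containment in a countable union of proper closed subvarieties. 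Passing to a projective embedding $A\hookrightarrow\BP^N$ or to the Lie algebra and taking $\ell\times\ell$ minors detects linear dependence of \emph{coordinate vectors}, which has nothing to do with dependence in the Mordell--Weil group, since the group law of $A$ is not linear in such coordinates; likewise a quotient of $\Hom^0_\xi(X_U,A)$ is irrelevant, as the objects whose dependence is at issue are points of $A$, not forms.

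The missing ingredient is the bounded-annihilator lemma of Michel--Venkatesh (Lemma \ref{lemMV} in the paper): if $\dim_{\BQ}\langle f(P_K)^{\sigma_t}\mid t\in C_K\rangle\le k$ for all $K$ in an infinite subfamily, then any $m=2k$ Galois translates satisfy $\sum_{i=1}^m n_i f(P_K)^{\sigma_{t_i}}=0$ in $A(H_{K,\fc})_{\BQ}$ with the $|n_i|$ bounded by a constant depending only on $k$. Combined with the finiteness of $\bigcup_K A(H_{K,\fc})_{\tor}$, this places every point of the image of $C_K^m\to X_U^m\to A^m$ on one of \emph{finitely many} genuine algebraic ``hyperplanes'' $\sum_i n_iQ_i+T=0$ in $A^m$, a collection independent of $K$; only then does the Zariski density of the CM tuples (Theorem \ref{density1}) produce the contradiction. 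Without this uniform bound on the coefficients of the annihilating relation (or some substitute for it), your argument does not close.
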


Our approach is based on 
\begin{itemize}
\item[--] Fourier analysis on the class group $C_K$ and its relation to CM points on self-products of the Shimura curve;
\item[--] Zariski density of well chosen CM points on self-products of the Shimura curve
and 
\item[--] finiteness of torsion points on the abelian variety over a compositum of class fields with bounded ramification.
\end{itemize}

\subsubsection{Torsion}

We prove a finiteness of torsion on a modular $\GL_2$-type abelian variety over a class of large extensions of the base field with bounded ramification.
\vskip2mm
{\it{Galois image}.} Let us first recall some results on the image of a $p$-adic Galois representation associated to a modular $\GL_2$-type abelian variety. 

Let $A$ be a modular $\GL_2$-type abelian variety and $\pi$ the corresponding cuspidal automorphic representation with central character $\omega$ as in \S3.1. For simplicity, we suppose that $A$ does not have CM.  
Let $L=\BQ(\pi)$ be the Hecke field and $F_{\omega}$ the extension cut out by the central character $\omega$.

Let $\Gamma \subset \Aut(L/\BQ)$ be the group of inner twists of $\pi$.
Recall that an inner twist of $\pi$ is a pair $(\sigma,\chi)$ with $\sigma$ an embedding of $L$ and $\chi$ a finite order character over $F$ such that there exists an isomorphism
$$^{\sigma}\pi \simeq \pi \otimes \chi$$
with $^{\sigma}\pi$ being the $\sigma$-conjugate. 
For such a $\sigma$, we have a unique Dirichlet character $\chi_{\sigma}$ such that the above isomorphism holds (\cite[B.3]{N}). 
Let $L^{\Gamma}$ be the fixed subfield of $L$ corresponding to $\Gamma$.
For $\sigma \in \Gamma$, we thus have an inner twist arising from $\chi_{\sigma}$.
Let $F_{\Gamma}/F$ be the extension corresponding to $\bigcap_{\sigma \in \Gamma}\ker(\chi_{\sigma})$.
Let $\frak{p}|p$ be a prime in $L$ and $\frak{p}_{\Gamma}$ the corresponding prime in $L^{\Gamma}$.

From the $\frak{p}$-adic Tate-module of $A$, we have the $\frak{p}$-adic Galois representation $\rho_{\frak{p}}: G_{F} \rightarrow \GL_{2}(O_{L,\frak{p}})$. It induces a Galois representation
$$\rho_\fp:  G_{F_{\omega}F_{\Gamma}}\lra \GL_{2}(O_{L^{\Gamma},\frak{p}_{\Gamma}}).$$
In view of the work of Ribet (\cite{Ri}) and Momose (\cite{Mo}), 
$$
\rho_{\frak{p}}(G_{F_{\omega}F_{\Gamma}}) \subseteq \bigg{\{}x \in \GL_{2}(O_{L^{\Gamma},\frak{p}_{\Gamma}}) \bigg{|} \det(x) \in \BZ_{p}^{\times} \bigg{\}}
$$
is an open subgroup (for example,  see \cite[B.5.2]{N}). Moreover, the equality holds for all but finitely many primes $\frak{p}$.
\vskip2mm
{\it{Finiteness}.} We have the following key proposition regarding control of torsion over ring class fields with bounded ramification. 
\begin{prop}
\label{torsion} 
Let $A$ be a modular $\GL_2$-type abelian variety over a  totally real field $F$. 

(1). Suppose that $A$ does not have CM. Then, there exist positive integers $n$ and $C$ such that for any CM quadratic extension $K/F$ the following holds. 

\begin{itemize}
\item For any prime $p>C$ and $0\neq Q\in A[p]$, we have $F(Q)\nsubseteq K^{ab}$ for the maximal abelian extension $K^{ab}$ of $K$. 
    \item For any prime $p$ and $Q\in A[p^n]\setminus A[p^{n-1}]$, we have $F(Q)\nsubseteq K^{ab}$.
        \end{itemize}
In particular, the set
$\displaystyle{\bigcup_K  A(H_{K, \fc})_\tor}$ is finite for an ideal $\fc$ of $\cO_F$ as $K$ varies over the CM quadratic extensions over $F$.

(2). If $A$ has CM, the same conclusion holds for the variation in the first two assertions over CM quadratic extensions of $F$ which do not contain the CM fields arising from the endomorphism algebra $\End^{\BQ}_{F}(A)$. Moreover, `In particular' part holds for variation over all the CM quadratic extensions of $F$. 
\end{prop}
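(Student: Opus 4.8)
\textbf{Proof plan for Proposition \ref{torsion}.} The plan is to reduce the statement to the openness of the image of the $\mathfrak{p}$-adic Galois representation recalled above, and then to extract from that openness a uniform bound on the ``abelian part'' of the torsion. First I would dispose of the CM case by a separate (simpler) argument and concentrate on the non-CM case. The crucial observation is that an abelian extension $K^{\ab}/K$, for $K/F$ a quadratic CM extension, has Galois group over $F$ that is \emph{metabelian}: $\Gal(K^{\ab}/F)$ sits in an extension of $\Gal(K/F)=\BZ/2$ by the abelian group $\Gal(K^{\ab}/K)$. Hence if $F(Q)\subseteq K^{\ab}$ for some torsion point $Q$, then the image $\rho_{\mathfrak{p}}(G_F)$ acting on the submodule generated by $Q$ factors through a metabelian quotient; equivalently, the subgroup of $\GL_2(O_{L,\mathfrak{p}})$ cut out by fixing $Q$ contains the second derived subgroup (or an appropriate commutator subgroup) of $\rho_{\mathfrak{p}}(G_{F_\omega F_\Gamma})$.

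Next I would make this quantitative. By the Ribet--Momose result, for all but finitely many $\mathfrak{p}$ the image $\rho_{\mathfrak{p}}(G_{F_\omega F_\Gamma})$ is \emph{all} of $\{x\in\GL_2(O_{L^\Gamma,\mathfrak{p}_\Gamma}): \det x\in\BZ_p^\times\}$, and for the finitely many remaining $\mathfrak{p}$ it is open, hence contains a principal congruence subgroup $\Gamma(\mathfrak{p}^{m_\mathfrak{p}})$ with $m_\mathfrak{p}$ bounded. For $\GL_2$ (more precisely $\SL_2$) over a local ring of residue characteristic $p>3$ (or $p>5$, to be safe), the relevant congruence subgroups are their own commutator subgroups up to a controlled loss, so the image of $\rho_{\mathfrak{p}}$ modulo its second commutator subgroup (the obstruction to a point being defined over a metabelian extension) is a finite group whose order is bounded by a constant independent of $K$ and of $\mathfrak{p}$ — this is where the constant $C$ (for the ``$p>C$ implies no $p$-torsion'' assertion) and the integer $n$ (absorbing the finitely many small primes and the bounded $m_\mathfrak{p}$) come from. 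Concretely: for $p>C$ the full group $\SL_2(\BZ_p)\subseteq\rho_{\mathfrak{p}}(G_{F_\omega F_\Gamma})$ is perfect and acts on $A[p]$ without nonzero fixed vectors and with no proper invariant metabelian-type quotient killing a line, so no $0\neq Q\in A[p]$ can have $F(Q)\subseteq K^{\ab}$; for general $p$ a point of exact order $p^n$ (with $n$ large, fixed) again forces the image to be too small to be open, contradicting Ribet--Momose. I would organize this as: (a) passing from $G_F$ to $G_{F_\omega F_\Gamma}$ costs only a bounded index (independent of $K$, since $F_\omega F_\Gamma$ depends only on $A$); (b) passing from $G_{F_\omega F_\Gamma}$ to $G_{K\cap(F_\omega F_\Gamma K^{\ab})}$ — but $K^{\ab}$ is abelian over $K$, so the commutator of $\rho_{\mathfrak{p}}(G_{F_\omega F_\Gamma K})$ lands inside $\rho_{\mathfrak{p}}(G_{F_\omega F_\Gamma})$ and one runs the perfectness/openness argument there.

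For the ``In particular'' statement, I would argue as follows. The ring class field $H_{K,\fc}$ is abelian over $K$, so $H_{K,\fc}\subseteq K^{\ab}$, and hence by the first two bullets every torsion point of $A$ rational over some $H_{K,\fc}$ has order dividing a fixed integer $N:=C!\cdot\prod_{p\le C}p^{n}$ (or simply: the prime-to-$C$ part of the order is trivial by the first bullet, and the $p$-part for $p\le C$ is bounded by $p^{n-1}$ by the second). Thus $\bigcup_K A(H_{K,\fc})_{\tor}\subseteq A(\ov F)[N]$, which is a finite set, giving the finiteness. The hypothesis $(\CO\otimes\BQ)\cap K=\BQ$ in Theorem \ref{rank1}(2), or equivalently the exclusion of CM fields in part (2) here, is exactly what prevents $K$ from ``absorbing'' the CM of $A$ and thereby creating large rational torsion; in the CM case one instead works over the CM field $M=\End^0_F(A)$ and its compositum with $K$, where the Galois image in the CM torus is again open with bounded index, and the same metabelian argument applies — this handles part (2), and when $K\not\supseteq M$ the field $MK$ is a biquadratic extension of $F$ so $H_{K,\fc}\subseteq K^{\ab}$ still fails to contain the relevant torsion.

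The main obstacle I anticipate is step (b): making precise and uniform the statement that ``a torsion point defined over a metabelian extension forces the Galois image modulo its second commutator to kill that point,'' and controlling the residual small primes $p\le C$ together with the finitely many $\mathfrak{p}$ where Ribet--Momose only gives openness rather than the full group. This requires a careful analysis of commutator subgroups of open subgroups of $\GL_2(O_{L,\mathfrak{p}})$ and of the action on $A[p^n]$ for $p$ small; the input from \cite{N}, \cite{Ri}, \cite{Mo} gives the openness, but the passage to a \emph{uniform in $K$} bound on the metabelian quotient is the delicate point. Everything else — the metabelian structure of $\Gal(K^{\ab}/F)$, the inclusion $H_{K,\fc}\subseteq K^{\ab}$, and the reduction of finiteness to $A(\ov F)[N]$ — is formal.
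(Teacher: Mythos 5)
Your plan is correct and follows essentially the same route as the paper: reduce to the Ribet--Momose openness of $\rho_{\fp}(G_{F_\omega F_\Gamma})$, observe that $F(Q)\subseteq K^{\ab}$ forces the Galois group of the relevant torsion field over $F'$ to be solvable of uniformly bounded derived length (quadratic step, abelian step, plus the metabelian stabilizer of $Q$), and contradict this for $p>C$ via simplicity/perfectness of $\PSL_2(k)$ and for small $p$ via commutator subgroups of congruence subgroups, with the ``in particular'' part following formally from $H_{K,\fc}\subseteq K^{\ab}$. The one step you flag as delicate is exactly where the paper's argument goes: it introduces the derived length $\mu$ and uses that $\mu\bigl(1+\fp M_2(\CO/\fp^n)\bigr)\to\infty$ as $n\to\infty$, which together with openness forces the uniform bound $n$; your alternative phrasing via openness of the second derived subgroup (it contains a principal congruence subgroup of bounded level, which fixes only points of bounded order) works equally well.
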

\begin{proof}The approach is based on the Galois image result. 

First we consider the case when $A$ does not have CM. Replacing $A$ by an isogeny, we may assume $\CO\subset \End_F(A)$ for $\CO$ being the ring of integer of $F'=F_\omega F_\Gamma$.  
Let $C$ be an integer such that 
\begin{itemize}
\item[--] for any prime $\ell >C$ and any prime ideal $\fp$ of $\CO$ above $\ell$, the equality of the Galois image alluded to earlier holds and 
\item[--] $\PSL_2(k)$ is simple where $k$ is the residue field of $\fp$. 
\end{itemize}

Let $K$ be a CM quadratic extension of $F$ and $K'=KF'$. Then, the extension $K'(A[\fp])/F'$ is a Galois extension. If there exists a non-trivial torsion point $Q\in A[\fp]$ contained in $A(K^{ab})$, then 
$K'(A[\fp])/K'(Q)$ is a two step abelian extension. Moreover, $K'(P)/K'$ is abelian by assumption. It follows that $K'(A[\fp])/F'$ is a solvable extension and the same holds for $F'(A[\fp])/F'$. On the other hand, 
$$\Gal(F'(A[\fp])/F')=\bigg{\{} x\in \GL_2(k)\ \bigg{|}\ \det x\in \BF_p^\times \bigg{\}}.$$
Thus,  $\Gal(F'(A[\fp])/F')$ has 
$\PSL_2(k)$ as its sub-quotient. 
This contradiction finishes the proof of first assertion. 

For any finite solvable group $G$, let $\mu(G)$ be the smallest non-negative integer $n$ such that there exists a sequence $$G=G_0\supset G_1\supset \cdots \supset G_n=1$$ with $G_i/G_{i+1}$ abelian.  For a prime $p$ and  a prime ideal $\fp$ of $F$ above $p$, suppose the second statement is not true. 
In other words, we then have $$\mu(K'(A[\fp^n])/F')\leq 4$$ for each $n$. 
In view of a simple estimation property of $\mu$, 
this is a contradiction to the Galois image result and the fact that
$$\lim_{n\ra \infty}  \mu(1+\fp M_2(\CO/\fp^n))=\infty.$$

For the case when $A$ has CM, the statement can be analogously be approached via the Galois image result (see \cite[Thm. B.6.3]{N}).

\end{proof}
\begin{remark}
In the non-CM case, the proof shows finiteness of the torsion over a class of large solvable extensions.
\end{remark}

\subsubsection{Non-vanishing}
We prove horizontal non-vanishing of Heegner points (Theorem \ref{Heegner}).
\vskip2mm
{\it{Bounded annihilator}.} Let us first recall the the following elementary lemma from \cite{MV1}.
\begin{lem}\label{lemMV} Let $G$ be a finite abelian group and $W$ a $k$-dimensional $\BQ$-representation of $G$. Let $S\subset G$ be a subset with $|S|=2k$. 

Then, there exists integers $n_s\in \BZ$, not all zero, such that the element $\sum_{s\in S} n_s s\in \BZ[S]$ annihilates the $\BQ[G]$-module $W$. Moreover, we may choose $n_s$ so that $$|n_s| \ll D^{k^2}$$ for an absolute constant $D$ 
(\cite[Lem. 4.2]{MV1}).
\end{lem}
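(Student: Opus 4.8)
The plan is to reduce the statement to a dimension count in a suitable $\BQ$-vector space of ``relations'', and then to invoke the existence of small integral solutions to a system of linear equations with integer coefficients. First I would decompose $W$ as a $\BQ[G]$-module. Since $G$ is finite abelian and we work over $\BQ$, the group algebra $\BQ[G]$ is semisimple, so $W$ is a direct sum of simple $\BQ[G]$-modules; each simple summand is a cyclotomic-type field $\BQ(\zeta)$ on which $G$ acts through a character $\psi\colon G\to \overline{\BQ}^\times$ together with its Galois conjugates. It suffices to understand, for each $g\in G$, the scalar $\psi(g)$ by which $g$ acts on the corresponding summand; an element $\sum_{s\in S} n_s s$ annihilates $W$ if and only if $\sum_{s\in S} n_s \psi(s)=0$ for every such $\psi$ (equivalently, for one representative $\psi$ in each Galois orbit, since the $n_s$ are rational integers, hence Galois-fixed). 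Thus the annihilator condition is a system of linear equations over $\overline{\BQ}$ — or, expanding each $\psi(s)$ in terms of a fixed $\BQ$-basis of the relevant cyclotomic field, a system of linear equations over $\BQ$ — in the $|S|=2k$ unknowns $n_s$.

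The key counting step is then: the total number of independent linear conditions imposed is at most $\dim_\BQ W = k$. Indeed, writing $W=\bigoplus_i W_i$ with $W_i$ simple of $\BQ$-dimension $d_i$ and $\sum_i d_i = k$, the annihilator of $W$ inside $\BQ[S]$ (viewed inside $\BQ[G]$) is cut out by the vanishing of the image of $\sum_s n_s s$ in each $W_i$ under the action map, and the image of $\BQ[S]\to \End_{\BQ[G]}(W_i)$-component lands in a space of dimension at most $d_i$ (the action of $\BQ[G]$ on the simple module $W_i$ factors through the field $\End_{\BQ[G]}(W_i)^{\op}\cong \BQ(\zeta_i)$ of degree $d_i$). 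Hence the space of relations $\{(n_s)_{s\in S}\in \BQ^{S} : \sum_s n_s s \text{ kills } W\}$ is the kernel of a $\BQ$-linear map $\BQ^{2k}\to \BQ^{\le k}$, so it has dimension $\ge 2k - k = k \ge 1$; in particular there is a non-zero rational, hence (after clearing denominators) non-zero integral, solution $(n_s)$.

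For the quantitative bound $|n_s|\ll D^{k^2}$, I would apply the standard ``small solutions of linear systems'' estimate (a Siegel-lemma / Bombieri–Vaaler type bound, or simply Cramer's rule on a maximal non-vanishing minor): a homogeneous system of $m\le k$ equations in $2k$ unknowns whose coefficient matrix has entries that are algebraic integers in a cyclotomic field of bounded degree and bounded absolute value admits a non-trivial integral solution with entries bounded by a fixed power of the number of variables times the height of the coefficients; since the coefficients here are roots of unity (and their expansions in a fixed integral basis of the cyclotomic field have coefficients bounded absolutely), the height is bounded by an absolute constant, and the exponent coming from expanding a $k\times k$ determinant is $O(k^2)$. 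This yields $|n_s| \ll D^{k^2}$ for an absolute constant $D$, as claimed.

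The main obstacle — really the only subtle point — is bookkeeping the passage from the $\overline{\BQ}$-valued character equations $\sum_s n_s\psi(s)=0$ to a genuine $\BQ$-linear system of the right size, i.e. checking that Galois-conjugate characters do not impose new conditions on the \emph{integer} vector $(n_s)$ and that the resulting rational coefficient matrix has height bounded by an absolute constant independent of $G$. This is where one must be slightly careful, but it is routine once the semisimple decomposition of $W$ is in hand; the rest is the dimension inequality $2k-k\ge 1$ and a black-box Siegel lemma. I will refer to \cite[Lem.~4.2]{MV1} for the details of the quantitative estimate.
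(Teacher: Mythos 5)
The paper gives no proof of this lemma at all: it is quoted verbatim from \cite[Lem.~4.2]{MV1} (``Let us first recall the following elementary lemma from \cite{MV1}''), so there is no in-paper argument to compare against. Your qualitative argument is correct and is the standard one: since $\BQ[G]$ is commutative semisimple, its image in $\End_\BQ(W)$ is a product of fields $\prod_i E_i$ with $\sum_i [E_i:\BQ]\le \dim_\BQ W=k$, so the requirement that $\sum_{s\in S} n_s s$ annihilate $W$ is a homogeneous $\BQ$-linear system of rank at most $k$ in the $2k$ unknowns $n_s$, whence a non-zero integral solution exists; this is surely also the content of the cited lemma. The one soft spot is in your quantitative discussion: the assertion that the expansions of the roots of unity $\psi(s)$ in a fixed integral basis of the cyclotomic field ``have coefficients bounded absolutely'' is not literally true --- in the power basis, for instance, reducing $\zeta_n^{\,j}$ for $j\ge\phi(n)$ involves coefficients of cyclotomic polynomials, which are unbounded as $n$ grows, and the discriminant factor in a Bombieri--Vaaler application is likewise not absolutely bounded. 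A clean repair is to work with the $r\times 2k$ character matrix $(\psi_j(s))$ directly over $\overline\BQ$: Cramer's rule produces kernel vectors whose entries are minors of size at most $k$ with root-of-unity entries, hence of absolute value at most $k!$ in every archimedean embedding, and taking traces down to $\BQ$ (twisting by a root of unity if necessary to avoid the trace vanishing) yields integer relations of size $k^{O(k)}\le D^{k^2}$. Since you explicitly defer the quantitative estimate to \cite[Lem.~4.2]{MV1} --- exactly as the paper does --- this imprecision does not affect the correctness of your proposal.
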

\vskip2mm
{\it{Proof of Theorem \ref{Heegner}}.} 

The modular parametrisation $f$ gives rise to a function on $C_{K}$ with values in the Mordell--Weil group $A(H_{K,\fc})$ for the ring class field $H_{K,\fc}$ (\S3.3.2).

In view of Fourier inversion on $C_K$ and Proposition \ref{torsion}, we note
$$
\liminf_K \#\bigg{\{}\chi\in \fX_{K,\chi_{0,S}}\bigg{|} P_f(\chi)\neq 0\bigg{\}}>0.
$$
Otherwise, $f: C_{K} \ra A(H_{K,\fc})_{\BQ}$ would be a constant function for any $K$ as above with sufficiently large discriminant. In view of Proposition \ref{torsion} and the Zariski density of the CM points arising from $C_{K}$, the $K$-constancy would contradict the modular parametrisation $f$ being a non-constant morphism. 

We now turn to the growth in the number of non-vanishing twists.
Suppose that the theorem does not hold. 
In other words, there exists an infinite subset $\Theta_1$ of $\Theta$ and an integer $k\geq 1$ such that for any $K\in \Theta_1$ 
\begin{equation}\label{bd}
\# \bigg{\{} \chi\in \fX_{K,\chi_{0,S}} \ \bigg{|}\ P_f(\chi)\neq 0\bigg{\}} =k.
\end{equation}

Let $W_{K}=\langle f(P)^{\sigma_t} | t \in C_{K} \rangle_{\BQ}$ be the subspace of 
$A(H_{K,\fc})_{\BQ}=A(H_{K,\fc})\otimes_\BZ \BQ$. 
In view of the hypothesis \eqref{bd}, we have
$$\dim_{\BQ} W_{K}=k.$$ 
By definition, $W$ is a $\BQ[C_K]$-module. Moreover, the action relates to Shimura's reciprocity law regarding the Galois action on CM points. 

Let $m=2k$ and consider the map
$$h=f^m \circ g: C_{K}^m \stackrel{g}{\lra} X_U^m \stackrel{f^m}{\lra} A^m.$$
Here the map $g$ is given by $g(t_1, \cdots, t_m)=(P_{K}^{\sigma_{t_1}}, \cdots, P_{K}^{\sigma_{t_m}})$ for any $(t_1, \cdots, t_m)\in C_{K}$ and $f^m$ arises from the modular parametrisation $f: X_{U} \ra A$.
It follows that 
$$h(C_{K}^m)=
\bigg{\{}(f(P_{K})^{\sigma_{t_1}}, \cdots, f(P_{K})^{\sigma_{t_m}}\big)\in A(H_{K, \fc})^m_\BQ \bigg{|} (t_1, \cdots, t_m)\in C_{K}^m\bigg{\}}$$
for all $K\in \Theta_1$.

In view of Lemma \ref{lemMV}, there exist integers $(n_i)\in \BZ^m$ with $|n_s|<D$ such that 
for any $(t_1, \cdots, t_m)\in C_{K}^m$, we have

$$
\displaystyle{\sum_{i=1}^m n_i f(P_{K})^{\sigma_{t_i}}=0 \ \text{in}\ A(H_{K, \fc})_\BQ.}
$$
In particular, any point $(Q_1, \cdots, Q_m)$ in the image of $h$ satisfies one of the following relations:
$$\sum_{i=1}^m n_i Q_i+T=0, \qquad (n_i)\in [-D, D]^m\cap \BZ^m, \qquad  T\in \bigcup_K A(H_{K, \fc})_\tor.$$ 

In view of Proposition \ref{torsion}, the number of such relations is finite as $K$-varies. 

On the other hand, from the Zariski density in Theorem \ref{density1}, the image of $g$ is dense in $X_U^m$. We conclude that
 $$\Im (h) \subset f^m(X_{U}^m)=f(X_{U})^m $$
 is Zariski dense. 
 Thus, $f(X_{U})^m$ is contained in the union of the finitely many ``hyperplanes" in $A^m$ as above. 
 
  This is a contradiction (for example, there exists $(y_1 , \cdots, y_n) \in \Im(h)$ with $y_1, \cdots, y_{n-1}$ algebraic and $y_n$ transcendental). 
  
  This finishes the proof of Theorem \ref{Heegner}.

\begin{remark}
(1). The proof crucially relies on the existence of a bounded annihilator (Lemma \ref{lemMV}). 
In the approach to the horizontal non-vanishing of Heegner points due to Michel--Venkatesh (\cite{MV1}), 
curiously such an annihilator also appears in an intermediate step. 

(2). One can ask for a higher weight analogue of Theorem \ref{derivative}. Namely, we may fix $\pi$ with weight at least two and vary $\chi$ with fixed infinity type such that the Rankin--Selberg convolution associated to the pair $(\pi,\chi)$ is self-dual with root number $-1$. Modulo conjectures on the non-vanishing of the height of non-torsion cycles, Gross--Zagier formula would reduce the non-vanishing to the non-triviality of generalised Heegner cycles on a Kuga--Sato variety over a Shimura curve. The Kuga--Sato variety typically depends on the weight and the CM extension. When $\chi$ is of finite order, the Kuga--Sato variety 
only depends on the weight. In this case, we may ask whether the current approach applies. We hope to consider the setup elsewhere.


\end{remark}

\section{Non-vanishing of toric periods}

In this section, we consider horizontal non-vanishing of toric periods.
\subsection{Indefinite case}
In this subsection, we consider the non-vanishing of toric periods when the underlying quaternion algebra is not totally definite.
\subsubsection{Setup}
We describe the setup and main result. The result will be proven in the following subsections.

Let $F$ be a totally real number field and $\BA$ its ring of \adeles. 
Let $I$ denote the set of infinite places of $F$. 
Let $\pi$ be a cuspidal automorphic representation of $\GL_2(\BA)$ with conductor $\fN$ and  a finite order central character $\omega$. Suppose that $\pi_\infty$
is a discrete series for $\GL_2(F_\infty)$ of weight $(k_v)_{v|\infty}$ with the same parity. 
Let $\fc$ be an ideal of $\cO_{F}$ such that for $\fp |\fN$, we have 
$$\ord_\fp \omega_\fp\leq \ord_\fp \fc.$$ Here $\omega_\fp$ denotes the local component of $\omega$.

Let $B$ be quaternion algebra over $F$ which is not totally definite such that there exists an irreducible automorphic  representation $\pi^{B}$ on $B_{\BA}^\times$ whose Jacquet--Langlands transfer is the automorphic representation $\pi$ of $\GL_2(\BA)$. 
Here $B_{\BA}=B\otimes_{F} \BA$. Let $I_{B} \subset I$ be a subset such that $\sigma \in I_{B}$ if and only if $B$ splits at $\sigma$. Let $I^{B}$ denote the complement of $I_{B}$. 

For a CM quadratic extension $K/F$ with CM type $\Sigma$, we often identify $\Sigma$ with $I$. 
For an integer $l \geq 0$ and $\kappa \in \BZ_{\geq 0}[\Sigma]$, 
let $l\Sigma + \kappa(1-c)$ be an allowable infinity type for arithmetic Hecke characters over a CM quadratic extension $K/F$ with $c \in \Gal(K/F)$ the non-trivial element. Suppose that 
\begin{itemize}
\item[(ID)] for $\sigma \in I_{B}$, $l+2\kappa_{\sigma} \geq k_{\sigma}$ and 
for $\tau \in I^{B}$, $l+2\kappa_{\tau} < k_{\tau}$.
\end{itemize}

Let $S=\Supp (\fN \fc \infty)$.  Let $K_{0, S}\subset B_S$ be a $F_S$-subalgebra such that 
\begin{itemize}
\item[(i)] $K_{0, \infty}=\BC$ and 
\item[(ii)] $K_{0, v}/F_v$ is semi-simple quadratic. 
\end{itemize}
For any $v\in S$, we say that $v$ is non-split if $K_{0, v}$ is a field and split otherwise as before and let
$$ U_{0, S}:=\prod_{v\in S,\ \text{$v$  split}}\CO_{K_{0, v}}^\times \times \prod_{v\in S,\ \text{$v$ non-split}} K_{0, v}^\times.$$

Suppose we are given a character $\chi_{0, S}: U_{0, S}\lra \BC^\times$ with conductor $\fc$ such that the following holds.
\begin{itemize}
\item[(LC1)]
$
\omega \cdot \chi_{0, S}\big|_{F_S^\times \cap U_{0, S}}=1.
$
 \item[(LC2)] The archimedean-component $\chi_{0,\infty}$ is with infinity type $l\Sigma+\kappa(1-c)$. 

 \item[(LC3)] $\prod_{v\in S} \chi_{0, v}(u)=1$ for all totally positive units $u$ in $F$.

\item[(LC4)]
$
\epsilon(\pi, \chi_{0,v})\chi_{0,v}\eta_v(-1)=\epsilon(B_v)
$
for all places $v|\fN \fc \infty$ with the local root number $\epsilon(\pi,\chi_{0,v})$ corresponding to the Rankin--Selberg convolution. 
\end{itemize}


 Fix a maximal order $R^{(S)}$ of $B_{\BA}^{(S)}\cong M_2(\BA^{(S)})$.
Let $U^{(S)}=R^{(S)^\times}$. Note that $U^{(S)}$ is a maximal compact subgroup of
$B_{\BA}^{(S)^{\times}}\cong \GL_2(\BA^{(S)})$. 

We introduce the underlying CM quadratic extensions of the totally real field.

 \begin{defn}\label{Theta'}
Let $\Theta_{S}$ denote the set of CM quadratic extensions $K/F$ such that
\begin{itemize}
\item[(i)] there exists an embedding $\iota_{K}:K\ra B$ with $K_S=K_{0, S}$ and
\item[(ii)] $\BA_{K}^{(S)}\cap R^{(S)}=\wh{\CO}_K^{(S)}$ under the embedding.
\end{itemize}
\end{defn}
For $K \in \Theta_{S}$, we fix such an embedding $\iota_{K}$.

We introduce the underlying Hecke characters over the CM quadratic extensions. 
\begin{defn}\label{char'}
For each $K\in \Theta$, let $\fX_{K,\chi_{0,S}}$ denote the set of characters $\chi$ over $K$ with $S$-type $\chi_{0,S}$ such that \begin{itemize}
\item[(i)] $\chi|_{\BA^\times} \cdot \omega=1$,
\item[(ii)] $\chi_S=\chi_{0, S}$ via the embedding $\iota_K$,  and
\item[(iii)] $\chi$ is unramified outside $S$.
\end{itemize}
\end{defn}
Note that the conductor of $\chi \in \fX_{K,\chi_{0,S}}$ equals $\fc$. Moreover, we have
\begin{itemize}
\item[(RN)] $\epsilon(\pi,\chi)=1$.
\end{itemize}
Here $\epsilon(\pi,\chi)$ denotes the global root number of the Rankin--Selberg convolution corresponding to the pair $(\pi,\chi)$.

In the rest of the section, we let $\Theta=\Theta_{S}$ and
$\fX_{K} = \fX_{K,\chi_{0,S}}$.

\begin{lem}\label{existence1'}
The set $\fX_{K}$ is non-empty for all but finitely many CM quadratic extensions $K/F$ with $K\in \Theta$. Moreover, it is a homogenous space for the relative class group $\Pic_{K/F}^{\fc}$.
\end{lem}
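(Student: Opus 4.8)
The plan is to follow the argument of Lemma \ref{existence1} essentially verbatim, the only change being that the Hecke characters are now allowed to have infinity type $l\Sigma+\kappa(1-c)$ rather than being of finite order. First I would recall, as in the proof of Lemma \ref{existence1}, that for all but finitely many CM quadratic extensions $K/F$ one has $\CO_K^\times=\CO_F^\times$, and fix such a $K$ from now on. Then class field theory gives the description of the relative ring class group
$$
\Pic_{K/F}^\fc=\BA_K^\times/K^\times K_\infty^\times \BA^\times U_\fc,
$$
and the task is to produce a Hecke character of $\BA_K^\times$ whose $S$-component is the prescribed $\chi_{0,S}$, which is trivial on $\BA^\times$ after twisting by $\omega$ (this is condition (i) together with (LC1)), and which is unramified outside $S$.

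The key point is that the obstruction to extending a local character on $U_{0,S}$ to a global Hecke character is purely a class-group/unit phenomenon, and the archimedean infinity type plays no role in whether such an extension exists — it only prescribes the behaviour of $\chi_\infty$, which is already incorporated into $\chi_{0,S}$ via (LC2). Concretely, I would again invoke the existence (e.g. \cite{KhKi}) of an auxiliary Hecke character $\epsilon$ of $\BA_K^\times$ with conductor $\fc$, trivial on $K_\infty^\times$ and on $\BA^\times$, valid for all but finitely many $K$; set $\chi_{0,S}'=\chi_{0,S}\cdot\epsilon|_{U_{0,S}}$; and note that $\chi_{0,S}'$ is trivial on the relevant units by (LC1) and (LC3), so by Pontryagin duality it extends to a character $\chi'$ of $\BA_K^\times/K^\times K_\infty^\times \BA^\times U_\fc$. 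Then $\chi'\cdot\epsilon^{-1}$ has the right $S$-type, is unramified outside $S$, and satisfies $\chi|_{\BA^\times}\cdot\omega=1$; hence $\fX_K$ is non-empty. The main obstacle — really the only nontrivial point — is checking that the relevant compatibility conditions (LC1), (LC3), together with the condition $\omega\cdot\chi_{0,S}|=1$ on $F_S^\times\cap U_{0,S}$, indeed guarantee that $\chi_{0,S}'$ is trivial on the image of $K^\times K_\infty^\times\BA^\times U_\fc$ in $U_{0,S}$, so that the extension step goes through; this is a routine diagram chase with the idele class groups, and the totally positive units condition (LC3) is exactly what is needed to handle the archimedean place.

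For the homogeneity statement: given any two characters $\chi_1,\chi_2\in\fX_K$, their ratio $\chi_1\chi_2^{-1}$ has trivial $S$-component, is trivial on $\BA^\times$, is trivial on $K_\infty^\times$ (the infinity types cancel), and is unramified outside $S$, hence factors through $\BA_K^\times/K^\times K_\infty^\times\BA^\times U_\fc=\Pic_{K/F}^\fc$. Conversely twisting any fixed element of $\fX_K$ by a character of $\Pic_{K/F}^\fc$ stays in $\fX_K$. Thus $\fX_K$ is a torsor under $\widehat{\Pic_{K/F}^\fc}$, which is what is meant by it being a homogeneous space for the relative class group. This part is formal and identical to the corresponding assertion in Lemma \ref{existence1}, so I would simply remark that the proof is the same, noting only that $\chi'\epsilon^{-1}\chi^{-1}$ factors through $\Pic_{K/F}^\fc$ for any $\chi\in\fX_K$.
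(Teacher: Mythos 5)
Your proposal is correct and takes essentially the same route as the paper, which simply notes that the argument of Lemma \ref{existence1} carries over: the class field theory description of $\Pic_{K/F}^{\fc}$, the auxiliary character $\epsilon$ from \cite{KhKi}, and the extension-then-untwist step all go through with the prescribed infinity type absorbed into $\chi_{0,S}$. Your additional observation that the infinity type plays no role in the extension obstruction, and your explicit verification of the torsor structure, are exactly the points implicitly relied upon.
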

 \begin{proof}
 The same argument as in the proof of Lemma \ref{existence1} applies.
 \end{proof}



We have the following existence of toric test vectors.

\begin{lem}
\label{TV1'}
There exists a non-zero form $f\in \pi^B$
satisfying the following.
\begin{itemize}
\item[(F1)] The subgroup $U_{0, S}$ acts on $f$ via $\chi_{0, S}$ and
\item[(F2)] $f \in \pi^{U^{(S)}}$.
\end{itemize}
\end{lem}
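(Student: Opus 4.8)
The plan is to establish Lemma~\ref{TV1'} by exactly the argument used for Lemma~\ref{TV1}, with the only new feature being the archimedean components carrying nontrivial weight. First I would invoke the local theory of toric periods: condition (RN), namely $\epsilon(\pi,\chi)=+1$, together with the local compatibility (LC4), guarantees via the results of Saito, Tunnell and Waldspurger (\cite{S}, \cite{T0}, \cite{T}) that the global Hom-space $\Hom_{\BA_K^\times}(\pi^B\otimes\chi,\BC)$ is nonzero and in fact one-dimensional; here one uses that the chosen quaternion algebra $B$ realizes the prescribed set of local signs, which is precisely what (LC4) and the condition (ID) on $I_B$, $I^B$ encode. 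In particular the Jacquet--Langlands transfer $\pi^B$ exists and the associated local Hom-spaces are all one-dimensional.

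Next I would produce a vector unramified away from $S$. By the local nonvanishing/test-vector results of \cite{CST} (applied place-by-place outside $S$, where $\pi_v$ is an unramified principal series and $\chi_v$ is unramified), there exists $\varphi\in(\pi^B)^{U^{(S)}}$ with $\ell(\varphi)\neq 0$ for a fixed generator $\ell$ of the (one-dimensional) toric functional. Then I would average $\varphi$ over $U_{0,S}$ against $\chi_{0,S}$: set
$$
f(g)=\int_{U_{0,S}/(F_S^\times\cap U_{0,S})}\chi_{0,S}(t)\,\varphi(gt)\,dt.
$$
This is well-defined by (LC1), it lies in $(\pi^B)^{U^{(S)}}$ since the averaging is over the $S$-part and commutes with the spherical action outside $S$, and by construction $U_{0,S}$ acts on $f$ through $\chi_{0,S}$, giving (F1) and (F2). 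The only point to check is $f\neq 0$: since $\ell$ is $\BA_K^\times$-equivariant for $\chi$ and $\chi_S=\chi_{0,S}$, one has $\ell(\chi_{0,S}(t)\varphi(\cdot\,t))=\chi_{0,S}(t)\ell(\varphi(\cdot\,t))=\ell(\varphi)$ for $t\in U_{0,S}$, hence $\ell(f)=\vol(U_{0,S}/(F_S^\times\cap U_{0,S}))\cdot\ell(\varphi)\neq 0$, so $f\neq 0$.

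I expect no serious obstacle here; this is a routine adaptation of Lemma~\ref{TV1}. The one place that demands slight care compared to the parallel-weight-two case is the archimedean behavior: $\chi_{0,\infty}$ now has infinity type $l\Sigma+\kappa(1-c)$, and one must confirm that the local toric Hom-space at the infinite places is still nonzero with $B$ splitting exactly at $I_B$. This is exactly the content of the archimedean branching law for discrete series of $\GL_2(\BR)$ (or its inner form) against characters of $\BC^\times$, and the inequalities in (ID) are chosen precisely so that $\epsilon(\pi,\chi_{0,\sigma})$ matches $\epsilon(B_\sigma)$ at each $\sigma\in I$; so (LC4) at the infinite places is consistent and the relevant local vector exists. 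The rest of the argument is formally identical to the proof of Lemma~\ref{TV1}, so I would simply write ``The same argument as in the proof of Lemma~\ref{TV1} applies, using \cite{S}, \cite{T0}, \cite{T} for the nonvanishing of the global toric period and \cite{CST} for the existence of a spherical test vector outside $S$.''
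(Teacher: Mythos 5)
Your proposal is correct and follows essentially the same route as the paper, whose proof of Lemma~\ref{TV1'} simply states that the argument of Lemma~\ref{TV1} applies: non-vanishing of the one-dimensional toric $\Hom$-space via \cite{S}, \cite{T0}, \cite{T}, a spherical test vector outside $S$ from \cite{CST}, and averaging over $U_{0,S}$ against $\chi_{0,S}$. Your explicit verification that $\ell(f)\neq 0$ and your remark on the archimedean branching law under (ID) are welcome elaborations of points the paper leaves implicit.
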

\begin{proof}
The same argument as in the proof of Lemma \ref{TV1} applies.
\end{proof}

Note that $f$ is spherical outside $S$.


Let $U$ be an open compact subgroup of $B_{\BA}^{(\infty),\times}$ such that 
\begin{itemize}
\item[(i)] $U=U_{S}U^{(S)}$ with $U_{S}\subset B_{S}^\times$, $U_{S}=\prod_{v \in S} U_{v}$ and $f\in \pi^U$; 
\item[(ii)] $\CO_{c, v}^\times \subset U_v$ for all $v\in S \backslash \supp\{\infty\}$. 
Here $\cO_{\fc,v}=\cO_{0,v}+\fc\cO_{F,v}$ for $v \in S$. 
\end{itemize}

Let $X_U$ be the corresponding quaternionic Shimura variety of level
$U$ (\S 2.1).

In view of the theory of quaternionic modular forms (\cite{HMF} and \cite{Ti}), we recall that $f$ admits algebro-geometric interpretation. The interpretation plays a key role in the following subsection.

We identify $\BA_{K}^\times$ as a subgroup of $B_{\BA}^\times$ under the embedding $\iota_{K}$.
The choice of embedding $\iota_{K}$ thus gives rise to a map
$$
\varphi_{K}:\Pic_{K/F}^{\fc}\rightarrow X_{U/\ov{\BQ}}
$$
(\S 2.2). 
For $\sigma \in \Pic_{K/F}^{\fc}$, let $x_{\sigma} \in X_{U/\ov{\BQ}}$ be the corresponding CM point on the quaternionic Shimura variety.
In what follows, the map
$\varphi_{K}$ plays an underlying role.

\subsubsection{Main result} 
We describe the main result. 

Let the notation and assumptions be as in \S4.1.1.
Recall that the global root number of the Rankin--Selberg convolution for the pair $(\pi,\chi)$ equals $1$. Accordingly, the central values of the $L$-functions are expected to be generically non-vanishing (Conjecture \ref{NV}). 

Our main result regarding the non-vanishing is the following.

\begin{thm}\label{L-value} 
Let $F$ be a totally real number field, $\BA$ the ring of adeles and $\pi$ a cuspidal automorphic
representation of $\GL_{2}(\BA)$.
Let $\fc \subset \cO_F$ be an ideal, $S=\supp(\fN\fc \infty)$ and $\chi_{0,S}$ a local character as above.
Let $\Theta_{S}$ be the set of CM quadratic extensions $K/F$ and $\fX_{K,\chi_{0,S}}$ the set of Hecke characters over $K$ with $S$-type $\chi_{0,S}$ as above.

For $\Theta \subset \Theta_{S}$ infinite, we have

$$\lim_{K\in \Theta} \#\left\{\chi\in \fX_{K,\chi_{0,S}} \ \Big|\ L(1/2,\pi,\chi)\neq 0 \right\}=\infty.$$
\end{thm}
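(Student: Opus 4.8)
The plan is to deduce Theorem~\ref{L-value} from the Waldspurger formula together with the Zariski density result Theorem~\ref{density2}, in close parallel with the proof of Theorem~\ref{Heegner}.

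I would begin with the Waldspurger formula in the formulation of Yuan--Zhang--Zhang: by the local conditions (LC1)--(LC4) the pair $(\pi,\chi)$ is matched with the quaternion algebra $B$, and for the toric test vector $f$ of Lemma~\ref{TV1'} one has $L(1/2,\pi,\chi)\neq 0$ if and only if the toric period $P_f(\chi)=\int_{K^\times\BA^\times\backslash\BA_K^\times} f(\iota_K(t))\,\chi(t)\,dt$ is non-zero. Since $f$ is $\chi_{0,S}$-toric and spherical outside $S$, the integrand is invariant under an open compact subgroup of $\BA_K^{(\infty),\times}$, so the period collapses to a finite sum. Using the algebro-geometric description of quaternionic modular forms and Shimura's reciprocity law, and fixing $\chi_0\in\fX_K$ so that $\chi=\chi_0\psi$ with $\psi\in\wh{\Pic_{K/F}^{\fc}}$, this sum can be rewritten as $P_f(\chi)=\Omega_K\sum_{\sigma\in\Pic_{K/F}^{\fc}}\phi_K(\sigma)\,\psi(\sigma)$, where $\Omega_K\in\ov{\BQ}^\times$ depends only on $K$ and $\phi_K\colon\Pic_{K/F}^{\fc}\to\ov{\BQ}$ records the values of $f$ at the CM points $P_{K,\fc}^{\sigma}=\varphi_{K,\fc}(\sigma)$ (suitably twisted by $\chi_{0,S}$). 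In particular $\#\{\chi\in\fX_K\mid P_f(\chi)\neq0\}$ equals the number of non-zero Fourier coefficients of $\phi_K$. The structural input I would extract from Shimura's reciprocity law is that, for all $\sigma,\tau\in\Pic_{K/F}^{\fc}$, the value $\phi_K(\sigma\tau)$ equals $\hat f(P_{K,\fc}^{\sigma\tau})$ up to a non-zero factor depending only on $\sigma$ and a non-zero factor depending only on $\tau$, for a single fixed non-constant rational function $\hat f$ on $X_U$ which is regular at the CM points in play.

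The non-vanishing is then proved by contradiction. If the conclusion failed there would be an infinite subset $\Theta_1\subset\Theta$ and an integer $k\ge 0$ with $\#\{\chi\in\fX_K\mid P_f(\chi)\neq0\}\le k$ for every $K\in\Theta_1$; by Fourier inversion $\phi_K=\sum_{j=1}^{k}c_j\psi_j$ for suitable $c_j\in\ov{\BQ}$ and $\psi_j\in\wh{\Pic_{K/F}^{\fc}}$. Put $\ell=k+1$. For any $\sigma_1,\dots,\sigma_\ell,\tau_1,\dots,\tau_\ell\in\Pic_{K/F}^{\fc}$ the identity $\phi_K(\sigma_i\tau_j)=\sum_m c_m\,\psi_m(\sigma_i)\,\psi_m(\tau_j)$ exhibits the matrix $\bigl(\phi_K(\sigma_i\tau_j)\bigr)_{1\le i,j\le\ell}$ as a sum of at most $k$ matrices of rank one, so it has rank $\le k$; in view of the factorisation of $\phi_K(\sigma\tau)$, the matrix $\bigl(\hat f(P_{K,\fc}^{\sigma_i\tau_j})\bigr)_{1\le i,j\le\ell}$ then also has rank $\le k$, hence determinant zero. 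Therefore the rational function $(y_{ij})\mapsto\det\bigl(\hat f(y_{ij})\bigr)_{1\le i,j\le\ell}$ on $X_{U/\ov{\BQ}}^{\ell^2}$ vanishes on the set $\Xi_{\ell,\fc}=\{(P_{K,\fc}^{\sigma_i\tau_j})_{i,j}\mid\sigma_i,\tau_j\in\Pic_{K/F}^{\fc},\ K\in\Theta_1\}$. By Theorem~\ref{density2} applied to the infinite set $\Theta_1$, the set $\Xi_{\ell,\fc}$ is Zariski dense in $X_{U/\ov{\BQ}}^{\ell^2}$, so $\det\bigl(\hat f(y_{ij})\bigr)_{i,j}$ vanishes identically. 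But $B$ is not totally definite, so $\dim X_U\ge 1$ and $\hat f$ is non-constant; hence at the generic point of $X_{U}^{\ell^2}$ the $\ell^2$ values $\hat f(y_{ij})$ are algebraically independent over $\ov{\BQ}$, and the determinant of a generic $\ell\times\ell$ matrix is a non-zero polynomial --- a contradiction. (The case $k=0$ is subsumed: it amounts to the statement that $\hat f$ cannot vanish on the Zariski-dense set of CM points furnished by Theorem~\ref{density1}.) This proves Theorem~\ref{L-value}.

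The main obstacle is the reduction carried out in the second paragraph: establishing, via Shimura's reciprocity law and the theory of CM periods, that the discretised toric period is governed by evaluating one fixed non-constant algebraic function $\hat f$ on $X_U$ at the CM points $P_{K,\fc}^{\sigma}$, with the ``row $\times$ column'' multiplicativity of the period correction factors that allows the rank bound to descend from $\phi_K$ to $\hat f$; this is precisely the Ax--Lindemann type functional independence alluded to in the introduction. The remaining ingredients are comparatively soft: the Waldspurger formula of \cite{YZZ}, the toric test vector of Lemma~\ref{TV1'}, and the Zariski density Theorem~\ref{density2}, the last resting on the Andr\'e--Oort conjecture for $X_{U/\ov{\BQ}}^{\ell^2}$. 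In contrast with the root number $-1$ case, no finiteness of torsion and no bounded-annihilator lemma enters here, because the periods are scalars and the rank bound ``$\le k$'' is automatically uniform over $K$.
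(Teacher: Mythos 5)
Your proposal is correct and follows essentially the same route as the paper: reduce to toric periods via the Waldspurger formula of \cite{YZZ}, deduce from the boundedness hypothesis that the $\ell\times\ell$ matrix of values of $f$ at the CM points $P_{K,\fc}^{\sigma_i\tau_j}$ is singular (the paper phrases this as linear dependence of translates of $f\cdot\chi_K$ and strips off the diagonal character factors, which is your rank-one decomposition plus row/column rescaling), and then contradict the Zariski density of $\Xi_{\ell,\fc}$ from Theorem \ref{density2}. The only cosmetic difference is in showing the determinant function on $X_U^{\ell^2}$ is not identically zero --- you invoke algebraic independence of $\hat f$ at independent generic points, the paper arranges the lower-triangular entries to vanish and the diagonal product to be non-constant --- and both treatments leave the passage from the automorphic form $f$ to an algebraic function on $X_U$ at the same level of detail.
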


The theorem will be proven in \S4.1.4 based on the the toric periods in \S4.1.3 and the Zariski density (Theorem \ref{density2}).

\subsubsection{Toric periods}
We introduce relevant toric periods arising from toric test vectors in \S4.1.1(\cite{YZZ}).

We first introduce a variant of CM points arising from $\Pic_{K/F}^{\fc}$ considered in \S4.1.1. In view of the Waldspurger formula, these CM points turn out to be the relevant ones. 

Let $$C_K=\BA_{K}^{(\infty),\times}/K^\times U_K$$ for $U_K$ arising from the embedding $\iota_K$ and the quaternionic level U in \S4.1.1. The map $\varphi_{K,\fc}: \Pic_{K/F}^{\fc}\rightarrow X_{U/\ov{\BQ}}$ factors through $C_{K}$.


\begin{defn}
Let $\chi \in \fX_{K,\chi_{0,S}}$ and $f$ a test vector as above. Then, the toric period corresponding to the pair $(f,\chi)$ is given by
$$P_f(\chi):=\sum_{\sigma \in C_K} f(x_{\sigma})\chi(\sigma).$$
\end{defn}

\begin{remark}
If we fix a $\chi_{K} \in \fX_{K,\chi_{0,S}}$, then the toric period $P_{f}(\chi\chi_{K})$ 
is well defined for any $\chi \in \wh{C_{K}}$.
However, the non-vanishing of $P_{f}(\chi\chi_{K})$ implies that $\chi\chi_{K} \in \fX_{K}$.
This follows from $f$ being $\chi_{0,S}$-toric.
The observation will be used in the proof of Theorem \ref{period}.
\end{remark}
In view of the Waldspurger formula, we have that 
$$
L(1/2, \pi, \chi)\neq 0 \iff P_f(\chi) \neq 0
$$ 
(\cite{YZZ}). Thus, Theorem \ref{L-value} is equivalent to  the following 
\begin{thm}\label{period}
Let $F$ be a totally real number field, $\BA$ the ring of adeles and $\pi$ a cuspidal automorphic
representation of $\GL_{2}(\BA)$.
Let $\fc \subset \cO_F$ be an ideal, $S=\supp(\fN\fc \infty)$ and $\chi_{0,S}$ a local character as above.
Let $\Theta_{S}$ be the set of CM quadratic extensions $K/F$ and $\fX_{K,\chi_{0,S}}$ the set of Hecke characters over $K$ with $S$-type $\chi_{0,S}$ as above. 
For $\chi \in \fX_{K,\chi_{0,S}}$, let $P_{f}(\chi)$ be the toric period as above. 

For $\Theta \subset \Theta_{S}$ infinite, we have

$$\lim_{K\in \Theta} \#\left\{\chi\in \fX_{K,\chi_{0,S}} \ \Big|\ P_{f}(\chi)\neq 0 \right\}=\infty.$$
\end{thm}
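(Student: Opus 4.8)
The plan is to adapt the argument used for Theorem \ref{Heegner}, replacing Heegner points with toric periods and the Zariski density of Theorem \ref{density1} with that of Theorem \ref{density2}. The starting point is that the modular form $f$, via its algebro-geometric interpretation as a quaternionic modular form on $X_U$, gives a function $\sigma \mapsto f(x_\sigma)$ on $C_K$ valued in a fixed finite-dimensional $\overline{\BQ}$-vector space $V_f$ (the fiber of the automorphic vector bundle, or simply $\overline{\BQ}$ in the parallel weight two case). Fourier inversion on the finite abelian group $C_K$ shows that $P_f(\chi) = \sum_{\sigma} f(x_\sigma)\chi(\sigma)$ vanishes for \emph{all} $\chi \in \widehat{C_K}$ with a fixed $S$-type precisely when $f$ restricted to the relevant coset of $C_K$ is constant. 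So the first step is to argue that, for $K$ of large discriminant, $f$ cannot be constant on $C_K$: otherwise the image of $\varphi_K: \Pic_{K/F}^\fc \to X_U$ would lie in a fiber of $f$, contradicting the Zariski density of these CM points (Theorem \ref{density1} with $\ell=1$) together with the fact that $f$ is a non-constant section. This yields $\liminf_K \#\{\chi \in \fX_K \mid P_f(\chi) \neq 0\} > 0$.

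Next I would push this to growth in the number of non-vanishing twists, by contradiction exactly as in \S3.2.4. Suppose there is an infinite subset $\Theta_1 \subset \Theta$ and an integer $k \geq 1$ with $\#\{\chi \in \fX_K \mid P_f(\chi) \neq 0\} = k$ for all $K \in \Theta_1$. Let $W_K = \langle f(x_\sigma) \mid \sigma \in C_K \rangle_{\overline{\BQ}} \subset V_f$; Fourier theory on $C_K$ forces $\dim W_K = k$ (the non-vanishing Fourier coefficients of the $V_f$-valued function $f$ on $C_K$ span $W_K$, and there are exactly $k$ of them compatible with the $S$-type constraint — here one uses, as in the remark after Definition \ref{char'}, that $P_f(\chi\chi_K) \neq 0$ forces $\chi\chi_K \in \fX_K$). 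The group $C_K$ acts on $W_K$ through a $k$-dimensional $\overline{\BQ}[C_K]$-module structure (Shimura's reciprocity law identifies this with the Galois action on CM points). Now apply Lemma \ref{lemMV} with $m = 2k$: there exist integers $(n_i) \in \BZ^m$, not all zero, with $|n_i| \ll D^{k^2}$, annihilating $W_K$; hence for every $(\sigma_1,\dots,\sigma_m) \in C_K^m$ one has $\sum_i n_i f(x_{\sigma_i}) = 0$ in $V_f$.

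The key geometric input comes at this point. Form the map
$$
h = f^m \circ g : C_K^m \xrightarrow{\ g\ } X_U^m \xrightarrow{\ f^m\ } (V_f)^m, \qquad g(\sigma_1,\dots,\sigma_m) = (x_{\sigma_1},\dots,x_{\sigma_m}),
$$
whose image lies in the union, over $K \in \Theta_1$ and $(n_i) \in [-cD^{k^2}, cD^{k^2}]^m \cap \BZ^m$, of the linear subspaces $\{\sum_i n_i y_i = 0\}$ of $(V_f)^m$. Since the $n_i$ are uniformly bounded and independent of $K$, this is a \emph{finite} union of proper linear subspaces. On the other hand, Theorem \ref{density2} (with $\ell = m$, taking the $\tau_j$ trivial, or more directly the version giving Zariski density of $\{(x_{\sigma_i})_i\}$ in $X_U^m$ as $K$ ranges over $\Theta$) shows that the union of the images of $g$ over $K \in \Theta_1$ is Zariski dense in $X_U^m$, whence $\overline{h(\bigsqcup_K C_K^m)} \supseteq f^m(X_U^m) = f(X_U)^m$ is not contained in any finite union of proper subspaces of $(V_f)^m$ — concretely, one can pick a point of $f(X_U)$ with transcendental coordinates over the field generated by the others. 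This contradiction proves the theorem.

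I expect the main obstacle to be the \emph{Fourier-theoretic bookkeeping with the $S$-type constraint and the non-scalar weight}: unlike the parallel weight two case of \S3, here $f$ is a section of a nontrivial automorphic bundle, so one must be careful that $W_K$ is genuinely finite-dimensional of dimension exactly $k$ with the $C_K$-action compatible with Shimura reciprocity, and that the passage from "$P_f(\chi) \neq 0$ for $k$ characters" to "$\dim W_K = k$" is correct — in particular the reduction, via condition (ID) and the choice of $B$, to the situation where the Waldspurger period $P_f(\chi)$ lands in a space on which Lemma \ref{lemMV} can be applied. A secondary technical point is verifying that the CM points $\{x_\sigma\}$ feeding into Theorem \ref{density2} are exactly those covered by that theorem's statement (the skewed-diagonal versus straight-diagonal configuration); the straight-product density of Theorem \ref{density1} already suffices for the $\liminf > 0$ step, and the refined Theorem \ref{density2} handles the growth step.
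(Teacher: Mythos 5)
There is a genuine gap in the growth step: the bounded-annihilator mechanism of \S3.2.4 does not transfer to toric periods, and the paper's actual proof uses a different device. In the Heegner case the values $f(P_K)^{\sigma_t}$ live in the finite-dimensional $\BQ$-vector space $A(H_{K,\fc})_\BQ$ on which $C_K$ acts by Galois, so the hypothesis of $k$ non-vanishing twists bounds $\dim_\BQ W_K$ and Lemma \ref{lemMV} produces an \emph{integral, uniformly bounded} annihilator, whence finitely many hyperplanes. Here the $f(x_\sigma)$ are scalars: your $W_K=\langle f(x_\sigma)\rangle_{\ov\BQ}\subset V_f$ is at most $\dim V_f$-dimensional (indeed $1$-dimensional in the scalar-weight case) and has nothing to do with $k$; the correct analogue is the span of the \emph{translates} of the function $f\cdot\chi_K$ on $C_K$, which is a $(k-1)$- or $k$-dimensional $\ov\BQ$-space. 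But Lemma \ref{lemMV} requires a $\BQ$-representation, and the $\BQ$-span of these translates has dimension growing like $k\cdot\phi(|C_K|)$ (the characters take values in roots of unity of unbounded order), so the annihilator's coefficients are not bounded independently of $K$; moreover any relation among translates carries the factors $\chi_K(s_i)$, which vary with $K$, so the resulting ``hyperplanes'' do not form a finite set. The finite-union-of-hyperplanes contradiction therefore collapses.

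The paper circumvents all of this with a determinant trick, and this is exactly why Theorem \ref{density2} has its skewed-diagonal form (which you dismiss as a secondary point, whereas it is the crux). Assuming only $k-1$ non-vanishing twists, any $k$ translates $t\mapsto f(tt_i)\chi_K(tt_i)$ are linearly dependent, hence $\det\bigl(f(s_jt_i)\chi_K(s_jt_i)\bigr)_{i,j}=0$ for all choices of $s_j,t_i\in C_K$; the character values factor out as diagonal matrices of roots of unity, leaving the \emph{character-free} identity $\det\bigl(f(s_jt_i)\bigr)_{i,j}=0$. The function $(x_{i,j})\mapsto\det\bigl(f(x_{i,j})\bigr)$ on $X_U^{k^2}$ is non-constant (arrange the lower-triangular entries at zeros of $f$ and the diagonal product non-constant), yet it vanishes on the set $\Xi_k$ of CM points indexed by the products $\sigma_i\tau_j$ --- precisely the configuration whose Zariski density is Theorem \ref{density2}. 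Your straight-product density (Theorem \ref{density1} with $\ell=m$) does not interface with this determinantal identity, and ``taking the $\tau_j$ trivial'' in Theorem \ref{density2} yields a non-dense diagonal set. Your first step (the $\liminf>0$ via Fourier inversion and $\ell=1$ density) is fine, but the growth argument needs to be replaced by the determinant argument.
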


Our approach is based on 
\begin{itemize}
\item[--] Fourier analysis on the class group $C_K$ and its relation to CM points on self-products of the quaternionic Shimura variety; 
\item[--] Zariski density of well chosen CM points on self-products of the quaternionic Shimura variety. 
\end{itemize}

\subsubsection{Non-vanishing}
We prove the horizontal non-vanishing of toric periods (Theorem \ref{period}). The approach is based on the Hilbert modular case in \cite{BH}. 

We begin with the proof. 

Suppose that the statement is not true. In other words, there exists an infinite subset $\Theta_1\subset \Theta$ and an integer $k\geq 1$ such that for any $K\in \Theta_1$
\begin{equation}\label{bd'}
\bigg{\{}\chi\in \fX_K\ \bigg{|}\ P_f(\chi)\neq 0\bigg{\}}=k-1.
\end{equation}

We now choose a $\chi_{K} \in \fX_{K}$.
Note that $f\cdot \chi_K$ is a well-defined $\ov{\BQ}$-valued function on $C_K$ for each $K$.

For $K \in \Theta$, let $t_{1}, ..., t_{k} \in C_{K}$ be $k$ distinct elements.
In view of the hypothesis \eqref{bd'}, it follows that the functions given by
$$
t \mapsto f(tt_{i})\chi_{K}(tt_{i})
$$
viewed as elements in the vector space of maps $C_{K} \rightarrow \overline{\BQ}$ are linearly dependent for $1 \leq i \leq k$.
Say,
$$
\sum_{i=1}^{k} c_{K,i}f(tt_{i})\chi_{K}(tt_{i}) = 0
$$
for some $c_{K,i} \in \overline{\BQ}$ and any $t \in C_{K}$.

We now choose $k$ distinct elements $s_{1},...,s_{k} \in C_{K}$ and consider the above dependence  for
$t=s_{1},...,s_{k}$. It follows that
$$
\det\bigg{(}f(s_{j}t_{i})\chi_{K}(s_{j}t_{i})\bigg{)}_{1\leq i,j \leq k}=0.
$$
Note that
$$
\bigg{(}f(s_{j}t_{i})\chi_{K}(s_{j}t_{i})\bigg{)}_{1\leq i,j \leq k}=
\diag\bigg{(}\chi_{K}(s_{j})\bigg{)}_{1\leq j \leq k}
\bigg{(}f(s_{j}t_{i})\bigg{)}_{1\leq i,j \leq k}
\diag\bigg{(}\chi_{K}(t_{i})\bigg{)}_{1\leq i \leq k}.
$$
Here $\diag\big{(}a_{i}\big{)}_{1\leq i \leq k}$ denotes the diagonal matrix with diagonal entries
$\{ a_{1}, ..., a_{k} \}$.

We conclude that
$$
\det\bigg{(}f(s_{j}t_{i})\bigg{)}_{1\leq i,j \leq k}=0.
$$

We now consider the function $f_{k}$ on the self-product $X_{U/\ov{\BQ}}^{k^{2}}$ given by
$$
\big{(}x_{i,j}\big{)}_{1\leq i,j \leq k} \mapsto
\det\bigg{(}f(x_{i,j})\bigg{)}_{1\leq i,j \leq k}.
$$
The lower triangular entries of the above matrix can be all arranged to be zero 
\footnote{In view of the algebro-geometric interpretation of quaternionic modular forms, $f$ has a zero.}.
Moreover, the product of the diagonal entries can be arranged to be non-constant simultaneously as
$f$ a non-constant morphism.
It follows that $f_{k}$ is a non-constant function.

The function $f_{k}$ vanishes on the collection $\Xi_{k}$ of CM points.
This contradicts the density in Theorem \ref{density2} and finishes the proof. 

\begin{remark}
(1). The proof crucially relies on the Zariski density (Theorem \ref{density1}). 

(2). Under certain hypothesis, the results were announced in \cite[\S4]{BH}.

(3). Based on the current approach, an analogous result can be proven for special values of Hecke L-functions. We may consider Hecke characters with a fixed infinity type and bounded conductor as CM quadratic extensions of a fixed totally real field vary. 
\end{remark}



\subsection{Definite case}
In this subsection, we consider the non-vanishing of toric periods when the underlying quaternion algebra is totally definite. 

We follow the approach in \cite{MV} rather closely and keep the exposition brief. 

Let the setup be as in \S4.1.1 with the only exception that $B$ is a totally definite quaternion algebra over the totally real field $F$. In particular, the set $\Theta_S$ denotes the set of CM quadratic extensions $K/F$ admitting an embedding $\iota_{K}: K \hookrightarrow B$ and $\fX_{K,\chi_{0,S}}$ denotes the set of Hecke characters over $K$ with the local type being $\chi_{0,S}$. 

Moreover, we suppose that $\pi$ is with parallel weight two. In particular, $\fX_{K,\chi_{0,S}}$ 
consists of finite order Hecke characters over $K$. 

The existence of toric vector (Lemma \ref{TV1'}) goes through verbatim. We use the same notation $f$ for a choice of a toric vector and $U$ for the quaternionic level. For $\chi \in \fX_{K,\chi_{0,S}}$, let $P_{f}(\chi)$ denote the corresponding toric period. 

In this setup, an underlying feature is that the underlying double coset $X_U$ no longer corresponds to an algebraic variety. In fact, $X_U$ is a finite set of points usually referred as a definite Shimura set 
\footnote{or a Hida `variety'}. 
Nevertheless, the embedding $\iota_{K}$ still gives rise to a map
$$
\varphi_{K}:\Pic_{K/F}^{\fc}\rightarrow X_{U}.
$$
For $\sigma \in \Pic_{K/F}^{\fc}$, let $x_{\sigma} \in X_{U}$ be the corresponding special point on the definite Shimura set.

Our main result regarding the non-vanishing is the following

\begin{thm}\label{definite} 
Let $F$ be a totally real number field, $\BA$ the ring of adeles and $\pi$ a cuspidal automorphic
representation of $\GL_{2}(\BA)$ with parallel weight two.
Let $\fc \subset \cO_F$ be an ideal, $S=\supp(\fN\fc \infty)$ and $\chi_{0,S}$ a local character as above.
Let $\Theta_{S}$ be the set of CM quadratic extensions $K/F$ and $\fX_{K,\chi_{0,S}}$ the set of Hecke characters over $K$ with $S$-type $\chi_{0,S}$ as above.

Then, we have

$$
\#\left\{\chi\in \fX_{K,\chi_{0,S}} \ \Big|\ L(1/2,\pi,\chi)\neq 0 \right\} \gg_{\pi, F, \epsilon, \chi_{0, S}} |D_K|^{\delta - \epsilon}
$$
for an absolute constant $\delta > 0$ and any $\epsilon > 0$.
\end{thm}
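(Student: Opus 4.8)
The plan is to follow the quantitative strategy of Michel--Venkatesh \cite{MV1}, \cite{MV} in the totally definite case, where the automorphic form $f$ is a function on the finite set $X_U$ with values in a finite-dimensional $\overline{\BQ}$-vector space and the toric period $P_f(\chi)$ is a genuine finite exponential sum over the class group $C_K$. The key numerical input is that, up to the finitely many excluded $K$, the relative class group $\Pic_{K/F}^{\fc}$ has order $\gg_{\fc,\epsilon} |D_K|^{1/2-\epsilon}$ by the Brauer--Siegel lower bound (Remark 1.2(1)), so $|C_K|$ grows polynomially in $|D_K|$; the goal is to produce a power of $|C_K|$ many non-vanishing twists.

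The main steps I would carry out are as follows. First, record the period as a function on $\widehat{C_K}$: for a fixed auxiliary $\chi_K \in \fX_{K,\chi_{0,S}}$, the map $\chi \mapsto P_f(\chi\chi_K)$ is (up to the twist by $\chi_K$) the Fourier transform of the $\overline{\BQ}$-valued function $\sigma \mapsto f(x_\sigma)\chi_K(\sigma)$ on $C_K$. Second, bound the ``complexity'' of this function: by Shimura's reciprocity law the set $\{x_\sigma : \sigma \in C_K\}$ is a single Galois orbit on the finite Shimura set $X_U$, and by a geometric argument --- an Ax--Lindemann / functional independence statement exactly of the type proved via Theorem \ref{density2}, here in its degenerate ``$X_U$ a point'' form, or alternatively the sub-convexity-free input of \cite{MV1} using Burgess-type bounds for the Eisenstein contribution --- the number of $\sigma$ for which $f(x_\sigma)\neq 0$, and more importantly the dimension of the span of the translates of this function under $C_K$, is bounded below by a fixed power of $|C_K|$. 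Third, apply the elementary linear-algebra / large-sieve inequality: if a function on a finite abelian group $G$ has $\delta|G|$ nonzero Fourier coefficients then its support is $\gg |G|^{?}$ and conversely (the ``uncertainty principle'' for finite abelian groups, or the averaging argument in \cite[\S4]{MV1}), so the number of $\chi$ with $P_f(\chi\chi_K)\neq 0$ is $\gg |C_K|^{\delta}$ for some absolute $\delta>0$. Finally, combine with the Waldspurger formula $L(1/2,\pi,\chi)\neq 0 \iff P_f(\chi)\neq 0$ (\cite{YZZ}) and the observation that non-vanishing of $P_f(\chi\chi_K)$ forces $\chi\chi_K\in\fX_K$, and translate $|C_K|\gg|D_K|^{1/2-\epsilon}$ into the stated bound $\gg_{\pi,F,\epsilon,\chi_{0,S}}|D_K|^{\delta-\epsilon}$ after adjusting $\delta$.

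The hard part will be the second step: extracting an \emph{effective} lower bound for the support (equivalently, for the rank of the span of $C_K$-translates) of the period function that is uniform in $K$. In \cite{MV1} this is where a subconvex bound for the Rankin--Selberg $L$-function, or in the split case a Burgess bound controlling the average of toric periods against Eisenstein series, enters; one must show $f$ does not become ``concentrated'' on $X_U$ as $K$ varies, and the geometric replacement for this is precisely that $f$ is a non-constant function on $X_U$ while the CM points $\{x_\sigma\}$ equidistribute, in a suitably robust sense, over $X_U$ --- a statement that in the present framework is the definite-case shadow of Theorem \ref{density2} together with control of the $\overline{\BQ}$-span. Once that uniform non-concentration is in hand, the passage to a power of $|D_K|$ is a routine combination of the finite-group uncertainty principle with Brauer--Siegel.
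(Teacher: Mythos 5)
There is a genuine gap in your second and third steps, and it sits exactly where the content of the theorem lies. First, the ``uncertainty principle'' is invoked in the wrong direction: for a nonzero function $\phi$ on a finite abelian group $G$ one has $|\mathrm{supp}\,\phi|\cdot|\mathrm{supp}\,\widehat{\phi}|\geq |G|$, so \emph{small} support of $\phi$ forces many nonzero Fourier coefficients; large support of $\sigma\mapsto f(x_\sigma)\chi_K(\sigma)$ (which is what equidistribution would give) yields only the trivial bound $|\mathrm{supp}\,\widehat{\phi}|\geq 1$ --- a character of $C_K$ itself is supported everywhere and has exactly one nonzero Fourier coefficient. Second, the alternative you offer --- bounding below the dimension of the span of the $C_K$-translates of this function --- is circular: that span decomposes into the $\chi$-isotypic lines with $\widehat{\phi}(\chi)\neq 0$, so its dimension \emph{equals} the number of nonvanishing periods, i.e.\ the quantity to be estimated. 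Third, the proposed geometric substitute (an Ax--Lindemann/Zariski-density statement ``in the degenerate $X_U$-a-point form'') has no content here: $X_U$ is a finite set, there is no André--Oort input available, and this is precisely why the definite case cannot be handled by the method of \S 4.1 and requires a genuinely analytic argument.

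The paper's proof supplies the missing quantitative chain as a first-moment argument. By the explicit Waldspurger formula of Cai--Shu--Tian, $L(1/2,\pi,\chi)=C\,|D_K|^{-1/2}|P_f(\chi)|^2$, so summing over $\chi\in\fX_{K,\chi_{0,S}}$ and applying Plancherel on $C_K$ converts the first moment of the $L$-values into $C\,h_{K,\fc}|D_K|^{-1/2}\cdot\bigl(h_{K,\fc}^{-1}\sum_{\sigma\in C_K}|f(x_\sigma)|^2\bigr)$; Michel's equidistribution theorem for the special points $x_\sigma$ on the definite Shimura set (itself resting on subconvexity) evaluates the inner average as $h_{K,\fc}(1+o(1))$, and Brauer--Siegel gives $h_{K,\fc}\gg_\epsilon |D_K|^{1/2-\epsilon}$, so the first moment is $\gg_\epsilon |D_K|^{1/2-\epsilon}$. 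The individual subconvex bound $L(1/2,\pi,\chi)\ll_{\pi,F}|D_K|^{1/2-\delta}$ of Michel--Venkatesh then forces at least $|D_K|^{\delta-\epsilon}$ of the summands to be nonzero. Your proposal names most of these ingredients but never assembles them; in particular it treats subconvexity and equidistribution as optional inputs to be replaced by a geometric argument, whereas in the definite case they are the entire proof.
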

The approach is based on 
\begin{itemize}
\item[--] subconvex bound for the underlying Rankin--Selberg L-values,
\item[--] equidistribution of special points on the definite Shimura set and
\item[--] explicit version of the Waldspurger formula. 
\end{itemize}
\begin{proof} 
In view of the subconvex bound for the underlying Rankin--Selberg L-values due to Michel--Venkatesh (\cite[Thm. 1.2]{MV}), there exists an absolute constant $\delta > 0$ such that
\begin{equation}\label{scv}
L(1/2,\pi,\chi)\ll_{\pi,F} |D_{K}|^{1/2-\delta}.
\end{equation}

Based on an explicit Waldspurger formula (\cite[Thm. 1.9]{CST}), there exists a constant $C$ only dependent on our initial datum such that
\begin{equation}\label{exWd}
L(1/2, \pi, \chi)= C \cdot |D_K|^{-1/2}|P_f(\chi)|^2
\end{equation}
for any Hecke character $\chi\in \fX_{K,\chi_{0,S}}$. 

In view of an equidistribution result due to Michel (\cite{M}), we have
\begin{equation}\label{eqd}
\frac{1}{h_{K,\fc}}\sum_{\sigma\in C_{K}} |f(x_{\sigma})|^2= h_{K,\fc} (1+o(1)).
\end{equation}
Strictly speaking, the equidistribution in \cite[\S6]{M} concerns the case when $F=\BQ$ and $\fc=1$. The result can be generalised to our situation via the same method based on the subconvex bound \eqref{scv} and certain explicit Waldspurger formulae (\cite{CST}, for example \eqref{exWd}).

Let $N_{K,\chi_{0,S}}$ denote the number of $\chi\in \fX_{K,\chi_{0,S}}$ such that 
$L(1/2, \pi, \chi)\neq 0$. 

For $\epsilon >0$, it now follows that there exist constants $C'$ such that
$$\begin{aligned}
N_{K,\chi_{0,S}} \cdot |D_K|^{1/2-\delta} &\gg_{\pi,F} \sum_{\chi \in \fX_{K,\chi_{0,S}}} L(1/2, \pi, \chi)=
C \frac{h_{K,\fc}}{\sqrt{|D_K|}} \left(\frac{1}{h_{K,\fc}} \sum_{\sigma \in C_{K}}
|f(x_{\sigma})|^2\right)\\
&=C' \frac{h_{K,\fc}^2}{\sqrt{|D_K|}}(1+o(1))\gg_{\epsilon} |D_K|^{1/2-\epsilon}.\end{aligned}$$
Here we utilise \eqref{scv}, \eqref{exWd}, \eqref{eqd} and the Brauer--Siegel lower bound, respectively. 

This finishes the proof.



\end{proof} 

\begin{remark}
(1). It is perhaps instructive to compare and contrast the indefinite and definite case. 
The approach seems quite specific to the case in hand and in particular we do not know if it can be transferrable. 

(2). In the case when $\pi$ does not have parallel weight two, the above approach does not seem to work directly as the test vector is no longer a function on the definite Shimura set. Nevertheless, the case may still be within reach.
\end{remark}



\section{Main results}
In this section, we conclude with the main results. 

\subsection{Torus embedding}
In this subsection, we describe a result on the existence of a certain torus embedding into a quaternion algebra. The embedding is used in the proof of the main results in \S5.2.

Let us first introduce the setup.

Let $F$ be a totally real number field and $\BA$ its ring of \adeles. Let $\pi$ be a cuspidal automorphic representation of $\GL_2(\BA)$ with conductor $\fN$ and  a finite order central character $\omega$. Suppose that $\pi_\infty$
is a discrete series for $\GL_2(F_\infty)$ of weight $(k_v)_{v|\infty}$ with the same parity. 
Let $\fc$ be an ideal of $\cO_{F}$ such that for $\fp |\fN$, we have 
$$\ord_\fp \omega_\fp\leq \ord_\fp \fc.$$ 
Here $\omega_\fp$ denotes the local component of $\omega$.

Let $B$ be quaternion algebra over $F$ 
such that there exists an irreducible automorphic  representation $\pi^{B}$ on $B_{\BA}^\times$ whose Jacquet--Langlands transfer is the automorphic representation $\pi$ of $\GL_2(\BA)$.

Let $S=\Supp (\fN\fc \infty)$.  Let $K_{0, S}\subset B_S$ be a $F_S$-subalgebra such that 
\begin{itemize}
\item[(i)] $K_{0, \infty}=\BC$ and 
\item[(ii)] $K_{0, v}/F_v$ is semi-simple quadratic.
\end{itemize}
Fix a maximal order $R^{(S)}$ of $B_{\BA}^{(S)}\cong M_2(\BA^{(S)})$.
Let $U^{(S)}=R^{(S)\times}$. Note that $U^{(S)}$ is a maximal compact subgroup of
$B_{\BA}^{\times (S)}\cong \GL_2(\BA^{(S)})$.

The main result of this subsection is the following 
\begin{lem}
\label{embedding}
Let $K/F$ be a CM quadratic extensions with $K_S\cong K_{0, S}$.

Then, there exists an embedding $\iota: K \hookrightarrow B$ such that
\begin{itemize}
\item[(i)] $\iota(K_{S}) = K_{0,S} $ and
\item[(ii)] $\iota(\BA_{K}^{S}) \cap U=\iota(\cO_{K}^{S})$.
\end{itemize}
\end{lem}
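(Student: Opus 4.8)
The plan is to obtain the embedding from three ingredients: the local--global principle for embeddings of a quadratic field into a quaternion algebra, the Skolem--Noether theorem, and a globalisation of adelic conjugators controlled by the reduced norm (this is in the spirit of the standard theory of optimal embeddings).

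First I would produce \emph{some} $F$-algebra embedding $\iota_{0}\colon K\hookrightarrow B$. By the local--global principle such an embedding exists as soon as $K_{v}$ embeds into $B_{v}$ for every place $v$, i.e.\ no place ramified in $B$ splits in $K$. At $v\in S$ this holds because $K_{v}\cong K_{0,v}$ by hypothesis and $K_{0,S}$ is an $F_{S}$-subalgebra of $B_{S}$ by construction; at $v\notin S$ it holds because $B$ is split there --- the finite places ramified in $B$ lie among those where $\pi_{v}$ is a discrete series, hence divide $\fN$ and lie in $S$, and likewise for the infinite places. In particular $R_{v}$ is a maximal order of $B_{v}\cong M_{2}(F_{v})$ for all $v\notin S$.

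Since all $F$-embeddings $K\hookrightarrow B$ are $B^{\times}$-conjugate (Skolem--Noether over $F$), it now suffices to find $b\in B^{\times}$ with $\Ad(b)\circ\iota_{0}$ satisfying (i) and (ii). I would first assemble the correct \emph{adelic} conjugator $g=(g_{v})_{v}\in B_{\BA}^{\times}$: for $v\in S$, Skolem--Noether over $F_{v}$ gives $g_{v}\in B_{v}^{\times}$ conjugating $\iota_{0}(K_{v})$ onto $K_{0,v}$; for $v\notin S$, the order $\iota_{0}^{-1}(R_{v})$ is an $\cO_{F_{v}}$-order in $K_{v}$, equal to $\cO_{K_{v}}$ for all but finitely many $v$, and at each exceptional place there is $g_{v}\in B_{v}^{\times}$ making $\Ad(g_{v})\circ\iota_{0}$ optimal for $R_{v}$ (the local optimal-embedding number of $\cO_{K_{v}}$ into a maximal order of $M_{2}(F_{v})$ being positive). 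Taking $g_{v}=1$ elsewhere, $\Ad(g)\circ\iota_{0}$ has the desired properties at every place.

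The crux is to replace $g$ by a genuine $b\in B^{\times}$: one needs $b_{v}$ to lie in the coset of the normaliser $N_{B_{v}^{\times}}(\iota_{0}(K_{v}))$ (for $v\in S$), resp.\ of $N_{B_{v}^{\times}}(R_{v})=F_{v}^{\times}R_{v}^{\times}$ (for $v\notin S$), prescribed by $g_{v}$. I would establish solvability by descending to the reduced norm: the class of $\nu(g)$ may be altered within the $\nu$-images of these normalisers, and one verifies that $F^{\times}$ times those subgroups exhausts the relevant part of $\BA^{\times}$ --- there is no archimedean sign obstruction, since at the archimedean places the normaliser together with $\nu(B^{\times})$ already fills $F_{v}^{\times}$, and at the finite split places of $S$ and at all $v\notin S$ the choices of $g_{v}$ leave ample freedom --- and then invoke weak approximation in $B^{\times}$ (equivalently strong approximation in $B^{1}$) to produce $b$. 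Then $\iota:=\Ad(b)\circ\iota_{0}$ satisfies $\iota(K_{S})=K_{0,S}$ and is optimal for $R^{(S)}$, i.e.\ $\iota(\BA_{K}^{(S)})\cap U=\iota(\wh{\cO}_{K}^{(S)})$. The only non-formal step, and the main obstacle, is this globalisation: passing from the adelic conjugator to a global one while keeping the reduced norm under control; the rest is Skolem--Noether and routine local computations with orders in $B_{v}$ and $K_{v}$.
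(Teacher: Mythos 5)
Your proposal is correct and follows essentially the same route as the paper: a local Skolem--Noether conjugation at the places of $S$ (plus optimality adjustments at the finitely many bad places outside $S$), followed by globalising the resulting adelic conjugator via strong approximation for $B^{1}$ together with reduced-norm considerations. The step you single out as the crux is exactly the one the paper handles — in fact more tersely, by asserting $B_{\BA}^{\times}=B^{\times}\cdot U\prod_{v\in S}K_{0,v}^{\times}$ directly from strong approximation — so your additional care with the norm obstruction is a filling-in of the same argument rather than a different approach.
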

\begin{proof}
Let $\iota_{0}:K \hookrightarrow B$ be an embedding such that
$$
\iota(\BA_{K}^{S}) \cap U = \iota(\widehat{\cO}_{K}^{S}).
$$
For $v \in S$, we have $\iota(K_{v}) \simeq K_{v}$. Thus, there exists $b_{v}\in B_{v}^{\times}$
such that
$$
b_{v}^{-1}\iota(K_{v})b_{v}=K_{0,v}.
$$
Let $b_{S}=(b_{v})_{v \in S}\in B_{S} \subset B_{\BA}^{\times}$.

In view of the choice of the level $U$, strong approximation for the subgroup $B^{1} \subset B^\times$ consisting of elements with identity norm implies that 
$$
B_{\BA}^{\times}=B^{\times}\cdot U \prod_{v \in S} K_{0,v} .
$$
Thus, There exists $b \in \widehat{B}^{\times}$ such that
$$b_{S}=bu$$ with $b \in B^{\times}$ and $u \in U \prod_{v \in S} K_{0,v} $.

We finally take $\iota$ to be $\iota_{0}^{b}:=b^{-1}\iota_{0}b$.
\end{proof}
\begin{remark}
An analogue of the embedding holds in the case of an incoherent quaternion algebra over $\BA$. The argument goes through verbatim. 
\end{remark}
\begin{defn}\label{Theta'}
Let $\Theta_{S}$ denote the set of CM quadratic extensions $K$ such that
\begin{itemize}
\item[] $K_{S}\cong K_{0,S}$. 
\end{itemize}
\end{defn}
For a CM quadratic extension $K/F$ with $K_S\cong K_{0, S}$, we fix an embedding $\iota_K$ as above from now.

\subsection{Main results} In this section, we describe and prove the main results. The proof is essentially a compilation of the results in \S2--\S4 and \S5.1.

Let the notation and assumptions be as in the introduction. In particular, $F$ is a totally real field and $\BA$ the ring of \adeles over $F$. 

We first consider the case of root number $-1$. Accordingly, we restrict to the case $\pi$ being a cuspidal automorphic representation of $\GL_2(\BA)$ with parallel weight two associated to a $\GL_2$-type abelian variety $A$ over the totally real field $F$.

Our result regarding the non-vanishing is the following 
\begin{thm}\label{main'1}
Let $F$ be a totally real number field, $\BA$ the ring of adeles and $\pi$ a cuspidal automorphic
representation of $\GL_{2}(\BA)$ with parallel weight two.
Suppose that $\pi$ corresponds to a $\GL_2$-type abelian variety over $F$.
For an integral ideal $\fc$, let $\Theta_{\fc}$ be  an infinite set of CM quadratic extensions $K/F$ such that there exists a finite order Hecke character
$\chi_0$ over $K$ satisfying the conditions (C1) and (C2) along with
$$
\epsilon(\pi,\chi_0)=-1.
$$

For $\Theta \subset \Theta_\fc$ infinite, we have
$$\lim_{K\in \Theta} \#\Big\{ \chi\in \fX_{K,\fc} \Big|\ L'(1/2, \pi, \chi)\neq 0 \Big\}=\infty.$$
\end{thm}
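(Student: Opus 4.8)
The plan is to deduce the theorem from Theorem~\ref{derivative} by stratifying $\Theta$ according to the local behaviour at $S:=\supp(\fN\fc\infty)$, and then running an elementary bookkeeping argument for the limit.

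First I would record that $\fX_{K,\fc}$ is a finite disjoint union $\fX_{K,\fc}=\bigsqcup_{\chi_{0,S}}\fX_{K,\chi_{0,S}}$ indexed by the $S$-components of its elements: any $\chi\in\fX_{K,\fc}$ is of finite order with $\cond(\chi)=\fc\cO_K$ and $\omega\chi|_{\BA^\times}=1$, so the \'etale $F_S$-algebra $K_S=\prod_{v\in S}K_v$ lies in one of finitely many isomorphism classes and the $S$-component $\chi_S$ is one of finitely many characters of $U_{0,S}$ of conductor dividing $\fc$. Among these finitely many types $\mathfrak t=(K_{0,S},\chi_{0,S})$, call $\mathfrak t$ \emph{relevant} if $\epsilon(\pi,\chi)=-1$ for some (equivalently, any) $\chi$ of type $\mathfrak t$; this is a property of $\mathfrak t$ alone, since $\epsilon(\pi,\chi)=\prod_{v\in S}\epsilon(\pi,\chi_{0,v})\chi_{0,v}\eta_v(-1)$ by the local theory recalled in \cite[Ch.~1]{YZZ} (the local signs at $v\notin S$ being trivial and $\prod_v\chi_v\eta_v(-1)=1$ by the product formula). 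For a relevant $\mathfrak t$ the conditions (LC1)--(LC3) of \S3.1 are satisfied: (LC1)--(LC2) are restrictions of $\omega\chi|_{\BA^\times}=1$ and of the triviality of $\chi$ on $F^\times$, while (LC3) holds because the local signs, having product $-1$, define an incoherent quaternion algebra $\BB$ over $\BA$. Thus the entire construction of \S3.1 applies to $\mathfrak t$, and by Lemma~\ref{embedding} the set $\Theta_S$ of \S3.1 attached to $\mathfrak t$ is exactly the set of CM quadratic extensions $K/F$ with $K_S\cong K_{0,S}$.

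Now let $\mathfrak T$ be the finite set of relevant types. By hypothesis every $K\in\Theta$ carries a finite-order Hecke character $\chi_0$ satisfying (C1)--(C2) with $\epsilon(\pi,\chi_0)=-1$, whose $S$-type lies in $\mathfrak T$; hence $\Theta=\bigcup_{\mathfrak t\in\mathfrak T}\bigl(\Theta\cap\Theta_S^{(\mathfrak t)}\bigr)$, a finite union. For each $\mathfrak t\in\mathfrak T$ with $\Theta\cap\Theta_S^{(\mathfrak t)}$ infinite, Theorem~\ref{derivative} gives
\[
\lim_{K\in\Theta\cap\Theta_S^{(\mathfrak t)}}\ \#\bigl\{\chi\in\fX_{K,\chi_{0,S}^{(\mathfrak t)}}\ \big|\ L'(1/2,\pi,\chi)\neq0\bigr\}=\infty,
\]
and since $\fX_{K,\chi_{0,S}^{(\mathfrak t)}}\subseteq\fX_{K,\fc}$ the same limit holds with $\fX_{K,\fc}$ in place of $\fX_{K,\chi_{0,S}^{(\mathfrak t)}}$. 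Finally, fixing $N$, the set of $K\in\Theta$ with $\#\{\chi\in\fX_{K,\fc}\mid L'(1/2,\pi,\chi)\neq0\}<N$ is contained in the union of the finitely many finite strata $\Theta\cap\Theta_S^{(\mathfrak t)}$ together with, inside each infinite stratum, the finitely many $K$ for which the displayed count is $<N$ (a finite set by the limit above, which also absorbs the finitely many $K$ with $\fX_{K,\chi_{0,S}}=\emptyset$, cf.\ Lemma~\ref{existence1}). Hence $\{K\in\Theta\mid\#\{\cdots\}<N\}$ is finite for every $N$, which is precisely the asserted limit.

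The substantive content — the inexplicit Gross--Zagier formula of \cite{YZZ} equating $L'(1/2,\pi,\chi)\neq0$ with non-triviality of the Heegner point $P_f(\chi)$, the finiteness of $\bigcup_K A(H_{K,\fc})_{\tor}$ (Proposition~\ref{torsion}), and the Zariski density of CM points on self-products of the Shimura curve (Theorem~\ref{density1}) — is entirely packaged inside Theorem~\ref{derivative}, so no new idea is needed here. I expect no serious obstacle: the only point that deserves care is the verification that a global root number $-1$ type satisfies (LC1)--(LC3) — equivalently, that the associated incoherent quaternion algebra $\BB$ exists — which is a standard consequence of the Tunnell--Saito--Waldspurger local dichotomy as formulated in \cite[Ch.~1]{YZZ}. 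As the section heading indicates, this result is a compilation of \S2--\S4 and \S5.1.
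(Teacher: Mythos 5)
Your proposal is correct and follows essentially the same route as the paper's proof: partition $\fX_{K,\fc}^{-}$ into the finitely many local types $(K_{0,S},\chi_{0,S})$, attach to each an incoherent quaternion algebra $\BB$ via the local sign condition, and invoke Lemma~\ref{embedding} together with Theorem~\ref{derivative}. The only difference is that you spell out the final bookkeeping over the finitely many strata of $\Theta$, which the paper leaves implicit.
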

\begin{proof} 

Let $S=\supp(\fN\fc\infty)$ as before. 

Let $K_{0, S}$ be a $F_S$-subalgebra such that $K_{0, \infty}=\BC$ and $K_{0, v}/F_v$ is semi-simple quadratic. 
Let $U_{0,S}$ be as in \S3.1. 
Suppose that there exists a finite order character $\chi_{0, S}: U_{0, S}\lra \ov{\BQ}^\times$ with conductor $\fc$ satisfying (LC1)-(LC2) in \S3.1 such that 
$$
\epsilon(\pi, \chi_{0,v})\chi_{0,v}\eta_{0,v}(-1)=-1
$$
for odd number of $v \in S$. 

From now, let $K \in \Theta_{S}$ (Definition \ref{Theta'}).
Recall that $\fX_{K,\fc}$ denotes the set of finite order Hecke characters over $K$ satisfying the conditions (C1) and (C2) (\S1).
We have 
$$
\fX_{K,\fc}= \fX_{K,\fc}^{-} \bigcup \fX_{K,\fc}^{+}.
$$
Here $\fX_{K,\fc}^{-}$ (resp. $\fX_{K,\fc}^{+}$) denotes the subset of Hecke characters $\chi$ such that $\epsilon(\pi,\chi)=-1$ (resp. $\epsilon(\pi,\chi)=1$). In view of the hypothesis, $\fX_{K,\fc}^{-}$ is non-empty.

As the conductor of Hecke characters in $\fX_{K,\fc}$ is fixed,
we have
$$
\fX_{K,\fc}^{-} = \bigcup_{\chi_{0,S}} \fX_{K,\chi_{0,S}}.
$$
Moreover, there are only finitely many choices for $(S,\chi_{0,S})$.
Here $\fX_{K,\chi_{0,S}}$ denotes the set of Hecke characters as in Definition \ref{char}. 

For any such $\chi_{0,S}$, we consider an incoherent quaternion algebra $\BB$ with $\ram(\BB) \subset S$ such that 
$$
\epsilon(\pi, \chi_{0,v})\chi_{0,v}\eta_{0,v}(-1)=\epsilon(\BB_{v})
$$
for $v \in S$. 

The result now follows from Lemma \ref{embedding} and Theorem \ref{derivative}.
\end{proof}

We now consider the case of root number $+1$. Accordingly, we consider the general case $\pi$ being a cuspidal cohomological automorphic representation of $\GL_2(\BA)$. 

Our result regarding the non-vanishing is the following 

\begin{thm}\label{main'0}
Let $F$ be a totally real number field, $\BA$ the ring of adeles and $\pi$ a cuspidal automorphic
representation of $\GL_{2}(\BA)$.
Suppose that $\pi_\infty$
is a discrete series for $\GL_2(F_\infty)$ of weight $(k_v)_{v|\infty}$ with the same parity.
For an integral ideal $\fc$, let $\Theta_{\kappa,\fc}$ be  an infinite set of CM quadratic extensions $K/F$ such that there exists a Hecke character
$\chi_{0} \in \fX_{K,\kappa,\fc}$ over $K$ satisfying the conditions (C1) and (C2) along with
$$
\epsilon(\pi,\chi_0)=+1.
$$
If $k_{\sigma} > l+2\kappa_{\sigma}$ for all $\sigma \in I$, then suppose that $\pi$ is with parallel weight $2$.

For $\Theta \subset \Theta_{\kappa,\fc}$ infinite, we have
$$\lim_{K\in \Theta} \#\Big\{ \chi\in \fX_{K,\kappa,\fc} \Big|\ L(1/2, \pi, \chi)\neq 0 \Big\}=\infty.$$
\end{thm}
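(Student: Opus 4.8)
The plan is to derive Theorem \ref{main'0} from the local non-vanishing results of \S4 by exactly the reduction used in the proof of Theorem \ref{main'1}: partition the Hecke characters of root number $+1$ according to their component at $S=\supp(\fN\fc\infty)$ and, for each fixed component, transport the problem into the framework of \S4 via the torus embedding of Lemma \ref{embedding}. Concretely, since the conductor $\fc$ and the infinity type $l\Sigma+\kappa(1-c)$ of every $\chi\in\fX_{K,\kappa,\fc}$ are fixed, the restriction $\chi_{S}$ of such a $\chi$ with $\epsilon(\pi,\chi)=+1$ ranges over finitely many data: a quadratic \'etale $F_{S}$-algebra $K_{0,S}$ with $K_{0,\infty}=\BC$, together with a character $\chi_{0,S}\colon U_{0,S}\to\BC^{\times}$ of conductor $\fc$ and archimedean infinity type $l\Sigma+\kappa(1-c)$ satisfying (LC1)--(LC3) of \S4.1.1 and $\prod_{v\in S}\epsilon(\pi,\chi_{0,v})=+1$. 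Choosing for each $K$ in the given infinite set $\Theta$ some $\chi_{0}^{(K)}\in\fX_{K,\kappa,\fc}$ with $\epsilon(\pi,\chi_{0}^{(K)})=+1$ (which exists by hypothesis) and grouping the $K$ by the isomorphism class of $(K_{S},\chi_{0}^{(K)}|_{K_{S}^{\times}})$, we obtain a partition of $\Theta$ into finitely many classes. As $\fX_{K,\chi_{0,S}}\subset\fX_{K,\kappa,\fc}$ and $\fX_{K,\chi_{0,S}}$ is non-empty for all but finitely many $K$ in its class by Lemma \ref{existence1'}, it suffices to fix one pair $(K_{0,S},\chi_{0,S})$, restrict to the infinite class $\Theta'\subset\Theta$ of $K$ with $K_{S}\cong K_{0,S}$ and $\chi_{0}^{(K)}|_{K_{S}^{\times}}\cong\chi_{0,S}$, and prove
$$\lim_{K\in\Theta'}\#\{\chi\in\fX_{K,\chi_{0,S}}\mid L(1/2,\pi,\chi)\neq 0\}=\infty.$$

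Fix such a pair $(K_{0,S},\chi_{0,S})$. The next step is to choose a quaternion algebra $B/F$ with $\ram(B)\subset S$ and Hasse invariants $\epsilon(B_{v})=\epsilon(\pi,\chi_{0,v})\chi_{0,v}\eta_{v}(-1)$ for $v\in S$, where $\eta=\eta_{K/F}$; this is what makes $(K_{0,S},\chi_{0,S})$ satisfy (LC4) for $B$. Such a $B$ exists: by the standard local root-number computation $\epsilon(\pi,\chi_{v})\chi_{v}(-1)\eta_{v}(-1)=1$ at every $v\notin S$ --- the reason (LC4) is imposed only at places dividing $\fN\fc\infty$ --- so the product formula for the Hecke characters $\chi$ and $\eta$ gives
$$\prod_{v\in S}\epsilon(\pi,\chi_{0,v})\chi_{0,v}(-1)\eta_{v}(-1)=\prod_{v}\epsilon(\pi,\chi_{v})=\epsilon(\pi,\chi)=+1,$$
whence $(\epsilon(\pi,\chi_{0,v})\chi_{0,v}\eta_{v}(-1))_{v\in S}$ is the family of Hasse invariants of a genuine quaternion algebra $B/F$ --- in contrast with the incoherent algebra that occurs when the root number is $-1$ --- and the same local conditions guarantee that $\pi$ transfers to an automorphic representation $\pi^{B}$ of $B_{\BA}^{\times}$. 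At an archimedean place $\sigma$ the invariant $\epsilon(B_{\sigma})$ equals $-1$ precisely when $k_{\sigma}>l+2\kappa_{\sigma}$, so the archimedean ramification of $B$ is $I^{B}=\{\sigma\in I\mid k_{\sigma}>l+2\kappa_{\sigma}\}$, independent of the chosen $\chi_{0,S}$; this is exactly the dichotomy encoded in condition (ID) of \S4.1.1, so (ID) holds automatically. Finally, Lemma \ref{embedding} supplies, for every $K\in\Theta'$, an embedding $\iota_{K}\colon K\hookrightarrow B$ with $\iota_{K}(K_{S})=K_{0,S}$ and $\iota_{K}(\BA_{K}^{S})\cap U=\iota_{K}(\wh{\cO}_{K}^{S})$, so that $(\pi,B,\chi_{0,S},(\iota_{K})_{K})$ is a datum of the form treated in \S4.1.1 (if $B$ is not totally definite) or in \S4.2 (if $B$ is totally definite).

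It now remains to invoke the appropriate non-vanishing statement. If $I^{B}\neq I$, i.e.\ $B$ is not totally definite, then Theorem \ref{L-value} --- equivalently Theorem \ref{period} --- applies and yields $\lim_{K\in\Theta'}\#\{\chi\in\fX_{K,\chi_{0,S}}\mid L(1/2,\pi,\chi)\neq 0\}=\infty$. If $I^{B}=I$, then $k_{\sigma}>l+2\kappa_{\sigma}$ for every $\sigma\in I$, so the hypothesis of the theorem forces $\pi$ to have parallel weight two; this is precisely the standing assumption of \S4.2, and Theorem \ref{definite} gives the stronger bound $\#\{\chi\in\fX_{K,\chi_{0,S}}\mid L(1/2,\pi,\chi)\neq 0\}\gg_{\pi,F,\epsilon,\chi_{0,S}}|D_{K}|^{\delta-\epsilon}$, which a fortiori tends to infinity. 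Summing over the finitely many classes $\Theta'$ completes the proof. I expect the step requiring the most care to be the construction of $B$ in the middle paragraph: one must match the prescribed local root numbers at the places of $S$ --- in particular the archimedean ones, whose sign is governed by the comparison of $k_{\sigma}$ with $l+2\kappa_{\sigma}$ --- with the Hasse invariants of an honest quaternion algebra over $F$ and with the Jacquet--Langlands transfer condition, in such a way that condition (ID) holds automatically and the parallel-weight-two hypothesis is used exactly in the totally definite case and nowhere else.
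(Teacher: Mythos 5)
Your proposal is correct and follows essentially the same route as the paper: the paper's own proof is a one-line reduction ("the same argument as in the proof of Theorem \ref{main'1} applies") that partitions $\fX_{K,\kappa,\fc}$ by the $S$-component $\chi_{0,S}$, selects the coherent quaternion algebra $B$ with $\epsilon(B_v)=\epsilon(\pi,\chi_{0,v})\chi_{0,v}\eta_v(-1)$, invokes Lemma \ref{embedding} for the torus embeddings, and concludes via Theorem \ref{L-value} or Theorem \ref{definite} according to whether $B$ is totally definite. You have merely made explicit the details the paper leaves implicit (the product-formula/parity argument for the existence of the coherent $B$, the automatic verification of (ID) at the archimedean places, and the observation that the parallel-weight-two hypothesis is used exactly in the totally definite case), and you correctly identify the target theorems of \S4 where the paper's text nominally cites Theorem \ref{derivative}.
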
 
\begin{proof} 
Based on Lemma \ref{embedding} and Theorem \ref{derivative}, the same argument as in the proof of Theorem \ref{main'1} applies.
\end{proof}


\begin{thebibliography}{XXXX}
\addtocontents{Bibliography}

\bibitem{AGHP} F. Andreatta, E. Goren, B. Howard and K. Pera,
\emph{Faltings heights of abelian varieties with complex multiplication}, Ann. of Math. (2) 187 (2018), no. 2, 391--531. 

\bibitem{BH} A. Burungale and H. Hida, {\em Andr\'e-Oort conjecture and non-vanishing of central L-values over Hilbert class fields}, Forum of Mathematics, Sigma 4 (2016), e8, 26 pp. 

\bibitem{Bu1} A. Burungale, \emph{Non-triviality of generalised Heegner cycles over anticyclotomic towers: a survey}, $p$-adic aspects of modular forms, 279--306, World Sci. Publ., Hackensack, NJ, 2016.

\bibitem{Bu2} A. Burungale, \emph{On the non-triviality of generalised Heegner cycles modulo $p$, 
 II: Shimura curves}, J. Inst. Math. Jussieu 16 (2017),  189--222. 
 
 \bibitem{Bu3} A. Burungale, \emph{On the non-triviality of generalised Heegner cycles modulo $p$, 
 I: modular curves},  J. Alg. Geom., to appear.
 
\bibitem{BHT} A. Burungale, H. Hida and Y. Tian, {\em Horizontal variation of Tate--Shafarevich groups}, 
preprint, arXiv:1712.02148.

\bibitem{BD} A. Burungale and D. Disegni, \emph{On the non-vanishing of $p$-adic heights on CM abelian varieties, and the arithmetic of Katz $p$-adic L-functions}, preprint, arXiv:1803.09268.

\bibitem{CST} L. Cai, J. Shu and Y. Tian, {\em Explicit Gross-Zagier and Waldspurger formulae}, Algebra and Number Theory 8 (2014), no. 10, 2523-2572.

\bibitem{De2} P. Deligne, {\em Vari\'et\'es de Shimura: interpr\'etation modulaire, et techniques de construction de mod$\acute{e}$les canoniques},  
Automorphic forms, representations and L-functions (Proc. Sympos. Pure Math., Oregon State Univ., Corvallis, Ore., 1977), Part 2,  247-289, Proc. Sympos. Pure Math., XXXIII, Amer. Math. Soc., Providence, R.I., 1979.

\bibitem{GZ} B. Gross and D. Zagier, {\em Heegner points and derivatives of L-series}. Invent. Math. 84 (1986), no. 2, 225-320.



\bibitem{Ha1} M. Harris, {\em Arithmetic vector bundles and automorphic forms on Shimura varieties. I},
Invent. Math. 82 (1985), no. 1, 151-189.

\bibitem{Ha2} M. Harris, {\em Arithmetic vector bundles and automorphic forms on Shimura varieties. II},
Compositio Math. 60 (1986), no. 3, 323-€"378.

\bibitem{HMF} H. Hida, {\em Hilbert modular forms and Iwasawa theory },
Oxford Mathematical Monographs. The Clarendon Press, Oxford University Press, Oxford, 2006. xiv+402 pp.

\bibitem{Hor}
L. H\"{o}rmander, {\em An introduction to complex analysis in several
variables}, Van Nostrand, Princeton, N.J. (1966).

\bibitem{KhKi} C. Khare and I. Kiming, {\em Mod $pq$ Galois representations and Serre's conjecture},
J. Number Theory 98 (2003), no. 2, 329--347.

\bibitem{Ko} V. Kolyvagin, \emph{Euler systems}, The Grothendieck Festschrift, Vol. II, 435--483, Progr. Math., 87, Birkh鋟ser Boston, Boston, MA, 1990. 

\bibitem{LZZ} Y. Liu, S. Zhang and W. Zhang, \emph{A $p$-adic Waldspurger formula}, Duke Math. J. 167 (2018), no. 4, 743--833.

\bibitem{M} P. Michel, {\em The subconvexity problem for Rankin-Selberg L-functions and equidistribution of Heegner points}, Annals of Mathematics, 160 (2004), 185--236. 


\bibitem{MV1} P. Michel and A. Venkatesh, {\em Heegner points and non-vanishing of Rankin/Selberg L-functions},
Analytic number theory, 169--183, Clay Math. Proc., 7, Amer. Math. Soc., Providence, RI, 2007.

\bibitem{MV} P. Michel and A. Venkatesh, {\em The subconvexity Problem for $\GL_2$},
Publ. Math. Inst. Hautes Études Sci. No. 111 (2010), 171--271.

\bibitem{Mo} F. Momose, \emph{On the $l$-adic representations attached to modular forms}, 
J. Fac. Sci. Univ. Tokyo Sect. IA Math. 28 (1981), no. 1, 89--109. 



\bibitem{N} J. Nekov\'a\v{r}™, {\em Level raising and anticyclotomic Selmer groups for Hilbert modular forms of weight two},
Canad. J. Math. 64 (2012), no. 3, 588--668.

\bibitem{Ri} K. Ribet, \emph{Twists of modular forms and endomorphisms of abelian varieties}, 
Math. Ann. 253 (1980), no. 1, 43--62.

\bibitem{S} H. Saito, \emph{On Tunnell's formula for characters of $\GL(2)$}, Compositio Math. 85 (1993), no. 1, 99--108.

\bibitem{Sh} G. Shimura, {\em On some arithmetic properties of modular forms of one and several variables},
Ann. of Math. (2) 102 (1975), no. 3, 491--€"515. 

\bibitem{Te1} N. Templier, \emph{A non-split sum of coefficients of modular forms}, 
{\it{Duke Math. J.}} 157 (2011), no. 1, 109--165. 

\bibitem{Te} N. Templier, \emph{Sur le rang des courbes elliptiques sur les corps de classes de Hilbert}, Compos. Math. 147 (2011), no. 4, 1087--1104. 


\bibitem{Ti} J. Tilouine, \emph{Quaternionic and Hilbert Modular Forms and their Galois Representations}, Cours au Clay Institute, Honolulu, June 2009. Preprint 45 pp.

\bibitem{Ts} J. Tsimerman, \emph{A proof of the Andr\'{e}-Oort conjecture for $\mathcal{A}_g$},
 Ann. of Math. (2) 187 (2018), no. 2, 379--390. 
 
\bibitem{T0} J. Tunnell,
{\em On the local Langlands conjecture for $\GL(2)$}, Invent. Math. 46
(1978), 179--200.


\bibitem{T} J. Tunnell, \emph{Local $\epsilon$-factors and characters of $\GL(2)$}, Amer. J. Math. 105 (1983), no. 6, 1277--1307.

\bibitem{V} A. Venkatesh, \emph{Sparse equidistribution problems, period bounds and subconvexity}, 
{\it{Ann. of Math.}} (2) 172 (2010), no. 2, 989--€"1094. 

\bibitem{Wa} J.-L. Waldspurger, {\em Sur les valeurs de certaines fonctions $L$
automorphes en leur centre de sym\'{e}trie}, Compositio Math. 54 (1985),
  no. 2, 173--242.


\bibitem{YZ} X. Yuan and S.-W. Zhang, \emph{On the average Colmez conjecture}, Ann. of Math. (2) 187 (2018), no. 2, 533--638.

\bibitem{YZZ} X. Yuan, S.-W. Zhang and W. Zhang, \emph{The Gross--Zagier Formula on Shimura Curves}, Annals of Mathematics Studies, Princeton University Press, 2012.

\bibitem{Zh} S.-W. Zhang, \emph{Equidistribution of CM-points on quaternion Shimura varieties},  Int. Math. Res. Not. 2005, no. 59, 3657--3689.

\end{thebibliography}
\end{document}